\documentclass[11pt]{amsart}
\usepackage{amsmath,amsfonts,amssymb,amsthm}
\usepackage[alphabetic]{amsrefs}
\usepackage{hyperref}
\usepackage{graphicx,color}
\usepackage{enumerate}
\usepackage{pictexwd,dcpic}

\newtheorem{theorem}{Theorem}[section]
\newtheorem{lemma}[theorem]{Lemma}
\newtheorem{lem}[theorem]{Lemma}
\newtheorem{thm}[theorem]{Theorem}
\newtheorem{prop}[theorem]{Proposition}
\newtheorem{cor}[theorem]{Corollary}
\newtheorem{main}{Theorem}

\newtheorem{cmain}[main]{Corollary}

\theoremstyle{definition}
\newtheorem{defin}[theorem]{Definition}
\newtheorem{rem}[theorem]{Remark}

\newcommand{\C}{\mathbb{C}}
\newcommand{\Z}{\mathbb{Z}}
\newcommand{\R}{\mathbb{R}}
\newcommand{\Q}{\mathbb{Q}}
\newcommand{\N}{\mathbb{N}}
\newcommand{\lk}{\mathrm{lk}}

\newcommand{\ics}{invariant cocompact subcomplex}
\newcommand{\sics}{c.s.}
\newcommand{\eps}{\varepsilon}
\newcommand{\catz}{$\mathrm{CAT(0)}$ }

\begin{document}

\title[Tits Alternative for $2$-dimensional $\mathrm{CAT}(0)$ complexes]{Tits Alternative for $2$-dimensional $\mathrm{CAT}(0)$ complexes}

\author[D.~Osajda]{Damian Osajda$^{\dag}$}
\address{Instytut Matematyczny,
	Uniwersytet Wroc\l awski\\
	pl.\ Grun\-wal\-dzki 2/4,
	50--384 Wroc\-{\l}aw, Poland}
\address{Institute of Mathematics, Polish Academy of Sciences\\
	\'Sniadeckich 8, 00-656 War\-sza\-wa, Poland}
\email{dosaj@math.uni.wroc.pl}
\thanks{$\dag \ddag$ Partially supported by (Polish) Narodowe Centrum Nauki, UMO-2018/30/M/ST1/00668.}

\author[P.~Przytycki]{Piotr Przytycki$^{\ddag}$}
	\address{
Department of Mathematics and Statistics,
McGill University,
Burnside Hall,
805 Sherbrooke Street West,
Montreal, QC,
H3A 0B9, Canada}
\email{piotr.przytycki@mcgill.ca}
\thanks{$\ddag$ Partially supported by NSERC and AMS}

\begin{abstract}
We prove the Tits Alternative for groups acting on $2$-dimensional $\mathrm{CAT}(0)$ complexes with a bound on the order of the cell stabilisers.
\end{abstract}

\maketitle

\section{Introduction}

A \emph{triangle complex} $X$ is a 2-dimensional simplicial complex, with a following \emph{piecewise smooth Riemannian metric}.
Namely, we have a family of smooth Riemannian metrics $\sigma_T, \sigma_e$ on the triangles and edges such that the restriction of $\sigma_T$ to $e$ is $\sigma_e$ for each $e\subset T$. Riemannian metrics $\sigma_T, \sigma_e$ induce metrics (i.e.\ distance functions) on triangles and edges. We then equip~$X$ with the \emph{quotient pseudometric}~$d$ (see \cite[I.5.19]{BH}).
We assume that for each metric ball $B$, the simplices  
of $X$ intersecting $B$ have only finitely many isometry types. (Note that the only time we will apply it to $B$ of radius nonzero is in the proof of Remark~\ref{rem:translation}.)
Then $(X,d)$ is a complete length space, which can be
deduced from \cite[I.7.13 and I.5.20]{BH} using a bilipschitz map from each $B$ to a piecewise Euclidean complex. 
All group actions on~$X$ will be by simplicial isometries. 

We say that a group acts on a cell complex $X$ \emph{almost freely} if there is a bound on the order of the cell stabilisers. Note that an almost free action on a triangle complex is proper in the sense of \cite[I.8.2]{BH}. Furthermore, any subgroup of a group acting properly and cocompactly acts almost freely. 
\begin{main}
\label{thm:main}
Let $G$ be a finitely generated group acting almost freely on a $\mathrm{CAT}(0)$ triangle complex $X$. Then $G$ is virtually cyclic, or virtually~$\mathbb{Z}^2$, or contains a nonabelian free group.
\end{main}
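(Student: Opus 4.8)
The plan is to suppose $G$ contains no nonabelian free subgroup and to prove it is then virtually cyclic or virtually $\Z^2$. As the almost free action is proper, a global fixed point $p\in X$ would put $G$ inside the finite stabiliser of the carrier of $p$; so we may assume $G$ infinite and fixing no point of $X$. Throughout I would use two standard facts: every simplicial isometry of $X$ is semisimple (since $X$ has only finitely many local isometry types of simplices), and, via the Flat Torus and Solvable Subgroup Theorems for proper actions, every $\Z^2\le G$ stabilises an isometrically embedded plane $\R^2\subseteq X$, while conversely a subgroup stabilising such a plane is finite, virtually cyclic, or virtually $\Z^2$ (it maps to $\mathrm{Isom}(\R^2)$ with finite kernel and discrete image). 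So the task reduces to finding, under our hypotheses, either a rank-one element, or a $G$-invariant flat, or a legitimate way to recurse.

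\textbf{Rank-one element.} If some $g\in G$ has an axis not bounding a flat half-plane, the contracting dynamics of this axis on $X\cup\partial X$ together with properness give a Bestvina--Fujiwara / Caprace--Fujiwara ping-pong: either $G$ preserves the pair of endpoints of the axis --- then an index-two subgroup fixes both, acts on the thin set of geodesics joining them, and is virtually cyclic, hence so is $G$ --- or $G$ contains a free subgroup, contrary to assumption. So henceforth \emph{$G$ has no rank-one isometry}.

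\textbf{From a flat plane to a cocompact action, and rigidity.} If $X$ has no flat plane at all, it is Gromov hyperbolic (Flat Plane Theorem for $2$-dimensional complexes); then the Flat Torus Theorem also excludes $\Z^2\le G$, and an almost free action on a hyperbolic space with neither a free subgroup nor a $\Z^2$ forces $G$ virtually cyclic. So assume $X$ contains a flat plane. I would next produce a $G$-invariant cocompact subcomplex $Y\subseteq X$ --- an \ics\ --- or else show that $G$ fixes a point of $\partial X$: start from a minimal nonempty $G$-invariant closed convex subcomplex and use finite generation to bound the failure of cocompactness, the bound on stabiliser orders being essential here. On such a $Y$ one applies the structure theory of $2$-dimensional \catz complexes (Ballmann--Brin, Ballmann--Buyalo): absent a rank-one geodesic, $Y$ is a Euclidean building --- where Tits' original alternative (free subgroups in lattices of $p$-adic groups) applies --- or a product of two trees or a tree times $\R$ --- where $G$ is virtually $\Z^2$, or contains a free subgroup, or fixes a point of $\partial X$ --- and if $Y$ does carry a rank-one geodesic then, being cocompact, it carries a periodic one, returning a rank-one element of $G$ and the previous case.

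\textbf{Fixed point at infinity, and the main obstacle.} It remains to treat $G$ fixing $\xi\in\partial X$. Here one passes to the Busemann character $\beta\colon G\to\R$ (discrete image, so $\cong\Z$ or trivial) and to the transverse \catz space at $\xi$, of dimension at most $1$, i.e.\ an $\R$-tree on which $G$ and $\ker\beta$ act, and recurses through the Tits Alternative for groups acting on $\R$-trees, reassembling $G$ from $\beta$ and the tree action. I expect the real work of the paper to concentrate in these last two steps: manufacturing the invariant cocompact subcomplex when cocompactness is not assumed, and running the parabolic recursion to termination. The subtle point is that $\ker\beta$ --- and the point stabilisers arising in the recursion --- need not be finitely generated, so one cannot simply induct on the group; instead the argument must stay at the level of the geometry of $X$ (flat planes, links that are metric graphs of girth $\ge2\pi$, and cross-sections of flat half-planes that are themselves $\R$-trees), and it is precisely here that $2$-dimensionality and the bound on cell-stabiliser orders cannot be dispensed with.
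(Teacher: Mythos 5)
Your outline is the classical Ballmann--Brin style trichotomy (rank-one element / rank rigidity on a cocompact core / parabolic fixed point), and it breaks down at exactly the points the paper was written to get around. The central gap is your step ``produce a $G$-invariant cocompact subcomplex $Y$, or else a fixed point in $\partial X$, starting from a minimal nonempty $G$-invariant closed convex subcomplex and using finite generation to bound the failure of cocompactness.'' No such minimal invariant convex subcomplex need exist when the action is not cocompact, and no argument is offered for the dichotomy; the paper never obtains a cocompact action on a $\mathrm{CAT}(0)$ space at all. What it actually gets, via the disc-diagram method of \cite[Prop~3.7]{OP}, is much weaker: either there is no \emph{thick} invariant cocompact subcomplex (in which case that proposition already gives the conclusion), or one can build a simply connected complex that is merely an \emph{increasing union} of connected essential $G$-cocompact subcomplexes. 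Since the $G$-action on this union is not cocompact, Ballmann--Brin rank rigidity is unavailable, and with it your dichotomy ``Euclidean building / product of trees / periodic rank-one geodesic'' collapses. This is precisely where the real content of the paper lies: in the absence of curved (rank-one) axes it proves the complex can be taken piecewise Euclidean, rational and extrational (after a folding reduction), and then manufactures a free subgroup directly from an edge of degree $\geq 3$ using sheared geodesics and Poincar\'e recurrence for a Liouville-measure Markov chain on the cocompact pieces (Sections 5--9) --- none of which is replaced by anything in your proposal.

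Two further points would also need repair even if the cocompact core existed. In the building case, Tits's theorem for linear groups does not apply: $2$-dimensional Euclidean buildings can be exotic, with automorphism groups that are not linear (the paper instead cites its predecessor \cite{OP} on recurrent complexes for that case). And your parabolic step is only a sketch: the Tits alternative for actions on $\R$-trees needs stability or properness hypotheses you do not have, the point stabilisers and $\ker\beta$ are not under control, and you acknowledge the recursion is unproven; the paper only ever uses the $\R$-tree at infinity in the presence of a curved axis (Lemma~\ref{lem:Rtree}), where the contraction property of the axis rules out loxodromics on $X_\xi$ and \cite[Thm~1.1(i)]{NOP} handles the elliptic kernel. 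Finally, a smaller inaccuracy: semisimplicity of simplicial isometries is not automatic under the paper's hypotheses --- Theorem~\ref{thm:main} only assumes local finiteness of isometry types on metric balls, and Remark~\ref{rem:semisimple} requires finitely many isometry types globally --- so even your opening ``standard facts'' need the reductions the paper performs before they can be invoked.
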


By \cite[{II.7.5 and II.7.7(2)}]{BH} and Remarks~\ref{rem:semisimple} and~\ref{rem:translation}, if $G$ acts almost freely on a $\mathrm{CAT(0)}$ triangle complex with finitely many isometry types of simplices, then every sequence $G_1<G_2<\cdots$ of virtually abelian subgroups of $G$
stabilises. Consequently:

\begin{cmain}
	\label{cor:infgen}
If $X$ has finitely many isometry types of simplices, then Theorem~\ref{thm:main} 
holds also for $G$ infinitely generated. 
\end{cmain}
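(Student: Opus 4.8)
The plan is to promote the finitely generated case to the general one by showing that, under the extra hypothesis, an infinitely generated $G$ that avoids nonabelian free subgroups is in fact finitely generated. So suppose $G$ contains no nonabelian free subgroup; we must show $G$ is virtually cyclic or virtually $\mathbb{Z}^2$.

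First I would note that every finitely generated subgroup $H\le G$ is virtually abelian. Indeed, $H$ acts on the same complex $X$ by the restricted action, and since $\st_H(\sigma)=\st_G(\sigma)\cap H$ for every cell $\sigma$, this action is still almost free with the same bound on the orders of cell stabilisers. Moreover $H$ inherits from $G$ the absence of nonabelian free subgroups. Applying Theorem~\ref{thm:main} to $H$, we conclude that $H$ is virtually cyclic or virtually $\mathbb{Z}^2$; in particular $H$ is virtually abelian.

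Next I would invoke the ascending chain condition on virtually abelian subgroups of $G$ recorded just before the statement of Corollary~\ref{cor:infgen} --- this is the only place where the hypothesis that $X$ has finitely many isometry types of simplices is used. Since $G$ admits no infinite strictly ascending chain of virtually abelian subgroups, and (by the previous paragraph) the finitely generated subgroups of $G$ form a nonempty collection of virtually abelian subgroups, this collection has a maximal element $H$. For any $g\in G$, the subgroup $\langle H,g\rangle$ is finitely generated and contains $H$, hence equals $H$ by maximality, so $g\in H$. Therefore $G=H$ is finitely generated, and Theorem~\ref{thm:main} applies to $G$ directly: as $G$ has no nonabelian free subgroup, $G$ is virtually cyclic or virtually $\mathbb{Z}^2$.

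Once the two inputs --- Theorem~\ref{thm:main} and the chain condition --- are in place, the argument is purely formal, so I do not anticipate a genuine obstacle. The only point that deserves a word of care is the passage from the chain condition to a maximal finitely generated subgroup: the ascending chain condition is equivalent, granting dependent choice, to the assertion that every nonempty family of subgroups has a maximal member, and it is this maximal-element form that is used above. It is worth remarking, too, that no countability assumption on $G$ is needed, precisely because working with a maximal subgroup sidesteps any enumeration of $G$.
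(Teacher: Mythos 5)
Your argument is correct and is essentially the paper's: the paper likewise deduces the corollary by combining Theorem~\ref{thm:main} applied to finitely generated subgroups with the stabilisation of ascending chains of virtually abelian subgroups recorded just before the statement (the only place the finiteness of isometry types is used). Your only refinement is phrasing the chain condition as the existence of a maximal finitely generated (hence virtually abelian) subgroup, which neatly avoids any countability or enumeration issue but does not change the substance of the proof.
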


As explained in \cite[page 3]{OP}, one
cannot omit in Theorem~\ref{thm:main} the assumption on almost freeness.

Here are some examples of applications of 
Theorem~\ref{thm:main} 
to particular groups. The first result, which is a consequence of Corollary~\ref{cor:infgen}, was studied independently by Paul Tee. We are assuming that $G$ below acts freely instead of almost freely, since $A$ is torsion free \cite[Thm~B]{CD}.

\begin{cmain}
\label{cor:Artin}
Let $G$ be a
subgroup of a $2$-dimensional Artin group $A$ acting freely on the modified Deligne complex of $A$ (see \cite{CD}). Then $G$ is cyclic, $\mathbb{Z}^2$, the fundamental group of the Klein bottle, or contains a nonabelian free group.
\end{cmain}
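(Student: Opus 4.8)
The plan is to derive this from Corollary~\ref{cor:infgen}, so the first task is to verify its hypotheses for the action of $G$ on the modified Deligne complex $X$ of $A$. By Charney--Davis \cite{CD}, for a $2$-dimensional Artin group the complex $X$, equipped with the piecewise Euclidean metric of \cite{CD}, is $2$-dimensional and $\mathrm{CAT}(0)$; after an equivariant barycentric subdivision if necessary, $X$ is a simplicial complex carrying a piecewise Euclidean, hence piecewise smooth Riemannian, metric, so it is a $\mathrm{CAT}(0)$ triangle complex in the sense of this paper. Since a $2$-dimensional Artin group has a finite defining graph, only finitely many labels $m_{st}$ occur, and therefore $X$ has only finitely many isometry types of simplices. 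Finally $A$, hence $G$, is torsion free \cite[Thm~B]{CD}, and a free action has trivial cell stabilisers, so it is in particular almost free. Thus Corollary~\ref{cor:infgen} applies and $G$ is virtually cyclic, or virtually $\mathbb{Z}^2$, or contains a nonabelian free group.

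It then remains to sharpen the first two alternatives using that $G$ is torsion free. A torsion-free virtually cyclic group is trivial or infinite cyclic, in either case cyclic. A torsion-free group that is virtually $\mathbb{Z}^2$ is a $2$-dimensional crystallographic group: replacing a finite-index copy of $\mathbb{Z}^2$ by its normal core, one may take a normal subgroup $A_0\cong\mathbb{Z}^2$ of finite index in $G$; its centraliser $C=C_G(A_0)$ is then normal of finite index and contains $A_0$ in its centre, hence is central-by-finite, so by Schur's theorem together with torsion freeness $C$ is abelian — thus $C\cong\mathbb{Z}^2$ is self-centralising and $G$ is crystallographic of dimension $2$. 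By the classical classification of $2$-dimensional Bieberbach groups, $G$ is then isomorphic to $\mathbb{Z}^2$ or to the fundamental group of the Klein bottle. Assembling the cases yields the statement.

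The step requiring the most care is the first one: one must confirm that the modified Deligne complex with the Charney--Davis metric genuinely fits our definition of a $\mathrm{CAT}(0)$ triangle complex — that the metric is of the required (piecewise smooth Riemannian) form, and that, after any equivariant subdivision needed to render the complex simplicial, only finitely many isometry types of simplices survive. Granting this, the statement follows formally from Corollary~\ref{cor:infgen} together with standard facts about torsion-free virtually abelian groups.
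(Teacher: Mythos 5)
Your proposal is correct and follows exactly the route the paper intends: apply Corollary~\ref{cor:infgen} to the Charney--Davis (Moussong) piecewise Euclidean $\mathrm{CAT}(0)$ metric on the modified Deligne complex, which has finitely many isometry types of simplices, noting that a free action is almost free and that $A$ is torsion free by \cite[Thm~B]{CD}. Your upgrade of the virtually cyclic and virtually $\mathbb{Z}^2$ cases to cyclic, $\mathbb{Z}^2$, or the Klein bottle group via torsion-freeness and the classification of $2$-dimensional Bieberbach groups is the standard argument the paper leaves implicit, and it is carried out correctly.
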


The second result concerns the tame automorphism group $\mathrm{Tame}(\mathbf k^3)$ (see \cite{LP}).
In \cite[\S2 and \S5]{LP} we introduced a cell complex $\textbf{X}$ with an action of $\mathrm{Tame}(\mathbf k^3)$. We proved that $\textbf{X}$ is 
$\mathrm{CAT(0)}$ for $\mathbf k$ of characteristic $0$ \cite[Thm~A]{LP}. Some cells of $\textbf{X}$ are polygons
instead of triangles, but we can easily transform $\textbf{X}$ into a triangle complex
by subdividing.

\begin{cmain}
\label{cor:Tame}
Let $G$ be a finitely generated
subgroup of $\mathrm{Tame}(\mathbf k^3)$, with $\mathbf k$ of characteristic $0$. Suppose that $G$ acts almost freely on the cell complex~$\mathbf X$. Then $G$ is virtually cyclic, or virtually~$\mathbb{Z}^2$, or contains a nonabelian free group.
\end{cmain}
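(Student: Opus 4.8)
The plan is to deduce Corollary~\ref{cor:Tame} directly from Theorem~\ref{thm:main}; the only genuine work is to replace $\mathbf X$ by a bona fide $\mathrm{CAT}(0)$ triangle complex without destroying the almost freeness of the $G$-action. By \cite[Thm~A]{LP}, $\mathbf X$ is $\mathrm{CAT}(0)$ when $\mathbf k$ has characteristic $0$, and its cells are Euclidean polygons. I would first subdivide $\mathbf X$ into a triangle complex $X'$, as already indicated in \cite[\S2 and \S5]{LP}: pass to the barycentric subdivision, so that each $2$-simplex of $X'$ has vertices the barycentres $b_{c_0}, b_{c_1}, b_{c_2}$ of a chain $c_0 < c_1 < c_2$ of cells of $\mathbf X$ (subdividing once more, if needed, to guarantee that the outcome is an honest simplicial complex rather than merely a complex with triangular cells). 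Every simplex and edge of $X'$ is isometrically embedded in a cell of $\mathbf X$ with the restricted Euclidean metric, so the underlying metric space of $X'$ coincides with that of $\mathbf X$; in particular $X'$ is $\mathrm{CAT}(0)$, and since the simplices of $X'$ meeting a given metric ball are cut out of the finitely many isometry types of cells of $\mathbf X$ meeting that ball, $X'$ is a triangle complex in the sense of the Introduction.

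Next I would verify that the induced action $G \curvearrowright X'$ is almost free. Barycentric subdivision is canonical, so the cellular $G$-action on $\mathbf X$ induces a simplicial $G$-action on $X'$. Each simplex $\sigma$ of $X'$ is contained in a unique smallest cell $c(\sigma)$ of $\mathbf X$ (the top term $c_k$ of the corresponding chain), and since every $g \in G$ permutes the cells of $\mathbf X$ preserving inclusions, $c(g\sigma) = g\,c(\sigma)$; hence $\mathrm{Stab}_G(\sigma) \leq \mathrm{Stab}_G\big(c(\sigma)\big)$. As $G$ acts almost freely on $\mathbf X$, there is a uniform bound on $\big|\mathrm{Stab}_G(c)\big|$ over all cells $c$ of $\mathbf X$, and the same bound controls $\big|\mathrm{Stab}_G(\sigma)\big|$ for every simplex $\sigma$ of $X'$. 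Thus $G$ acts almost freely on the $\mathrm{CAT}(0)$ triangle complex $X'$, and since $G$ is finitely generated, Theorem~\ref{thm:main} applies and gives that $G$ is virtually cyclic, or virtually~$\mathbb{Z}^2$, or contains a nonabelian free group.

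The argument is essentially bookkeeping, so I do not expect a serious obstacle. The one point requiring a little care is arranging that the subdivision of $\mathbf X$ is genuinely simplicial; depending on how cells of $\mathbf X$ are attached this may force a second barycentric subdivision, but this changes neither the metric (hence not the $\mathrm{CAT}(0)$ property) nor the containment $\mathrm{Stab}_G(\sigma) \leq \mathrm{Stab}_G(c(\sigma))$, so the stabiliser bound survives unchanged.
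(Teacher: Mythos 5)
Your proposal is correct and follows essentially the same route as the paper, which simply notes that $\mathbf X$ is $\mathrm{CAT}(0)$ by \cite[Thm~A]{LP}, subdivides its polygonal cells to obtain a triangle complex, and applies Theorem~\ref{thm:main}. Your write-up just fills in the routine details (stabilisers of simplices lie in stabilisers of the smallest containing cell, the metric and hence the $\mathrm{CAT}(0)$ property are unchanged) that the paper leaves implicit in the phrase ``we can easily transform $\mathbf X$ into a triangle complex by subdividing.''
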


An ingredient in the proof of Theorem~\ref{thm:main} is the following characterisation of $\mathrm{CAT(0)}$ triangle complexes using a link condition. In \cite[Thm~7.1]{Babu} this was proved only for locally compact triangle complexes, and in \cite[II.5.2]{BH} only for piecewise Euclidean and piecewise hyperbolic triangle complexes.
\begin{main}
	\label{thm:localCAT(0)}
	A triangle complex $X$ is locally $\mathrm{CAT}(0)$ if and only if 
	\begin{enumerate}[(i)]
		\item the Gaussian curvature of $\sigma_T$ at any interior point of a triangle $T$ of~$X$ is $\leq 0$, and
		\item the sum of geodesic curvatures in any two distinct triangles of $X$ at any interior point of a common edge is $\leq 0$, and
		\item for each vertex $v$ of $X$, the girth of the link $\lk^X_v$ is $\geq 2\pi$.
	\end{enumerate}
\end{main}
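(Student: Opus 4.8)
The plan is to verify the $\mathrm{CAT}(0)$ condition locally at the three types of points of $X$: interior points of triangles, interior points of edges, and vertices. If $q$ is interior to a triangle $T$, then a small metric ball about $q$ lies isometrically in the Riemannian surface $(T,\sigma_T)$, which near $q$ is $\mathrm{CAT}(0)$ precisely when its Gaussian curvature there is nonpositive, by the standard local comparison for Riemannian manifolds of nonpositive curvature (see \cite{BH} and the Rauch comparison theorem); so here the equivalence is exactly~(i), and (ii), (iii) are irrelevant. For the forward implication at an edge point or vertex $v$: since links of points in a $\mathrm{CAT}(0)$ space are $\mathrm{CAT}(1)$ \cite[II.3.19]{BH} and a metric graph is $\mathrm{CAT}(1)$ iff its girth is $\geq 2\pi$, one gets~(iii); and~(ii) follows by a folding argument, producing, whenever $\kappa(\sigma_{T_1})+\kappa(\sigma_{T_2})>0$ at an interior point $p$ of an edge of $T_1\cap T_2$, a small geodesic triangle with one vertex near $p$ and the others just inside $T_1$ and $T_2$ whose angle sum exceeds $\pi$.

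The substance is the converse near an interior edge point or a vertex $q$, which I would obtain by piecewise-Euclidean approximation. Fix a small simply connected neighbourhood $N$ of $q$; by hypothesis the simplices meeting $N$ have finitely many isometry types, though possibly infinitely many of them. For each triangle $T$ meeting $N$, triangulate $(T,\sigma_T)$ by geodesic triangulations of mesh tending to $0$, compatibly along shared edges, keeping all edges and vertices of $X$ in the $1$-skeleton, and replace each small curved triangle by its Euclidean comparison triangle. Near each edge and near $q$ the triangulation must be chosen with some care: along the broken-geodesic approximation of an edge, the interior angle at a vertex of the page corresponding to a triangle $T_i$ should differ from $\pi$ by (approximately) the integral of $\kappa(\sigma_{T_i})$ over the adjacent arc, and the link of $q$ should stay combinatorially equal to $\lk^X_q$ with edge lengths not decreased. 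The outcome is a piecewise-Euclidean complex $N^{(m)}$ with finitely many shapes whose distance function converges uniformly to that of $N$ on a fixed smaller neighbourhood of $q$.

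On $N^{(m)}$ I would then verify the link condition \cite[II.5.2 and I.7.13]{BH}. At an interior vertex the cone angle is a sum of Euclidean comparison angles, hence at least the corresponding sum of curved angles, which equals $2\pi$, by~(i) and the $\mathrm{CAT}(0)$ angle comparison for small geodesic triangles. At an interior point of an edge lying in triangles $T_1,\dots,T_k$ the link is a graph consisting of two vertices joined by $k$ arcs of lengths $\beta_1,\dots,\beta_k$, and since each $\beta_i+\beta_j$ equals $2\pi$ minus the integral of $\kappa(\sigma_{T_i})+\kappa(\sigma_{T_j})$ over the adjacent arc, which is $\geq 2\pi$ by~(ii), its girth is $\geq 2\pi$ (for $k\leq 1$ it is a tree). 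And at $q$ the link has girth at least that of $\lk^X_q$, hence $\geq 2\pi$ by~(iii). So all links of $N^{(m)}$ are $\mathrm{CAT}(1)$, whence $N^{(m)}$ is complete and locally $\mathrm{CAT}(0)$, and, being simply connected, $\mathrm{CAT}(0)$ by the Cartan--Hadamard theorem \cite[II.4.1]{BH}. Letting $m\to\infty$ and using that a uniform limit of $\mathrm{CAT}(0)$ metrics is $\mathrm{CAT}(0)$, we conclude that $N$, and therefore $X$, is locally $\mathrm{CAT}(0)$.

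I expect the main obstacle to be arranging the approximation near edges and near $q$ so that the link condition holds exactly rather than up to a vanishing error: the exterior angles of a geodesic polygon inscribed in a curve agree with $\int\kappa\,ds$ only asymptotically and with curvature corrections, and at a boundary corner of a triangle one cannot extend $\sigma_T$ past the boundary keeping curvature nonpositive, so some care is needed---a suitably developed collar along each edge, or an error-tolerant limiting argument replacing ``$\mathrm{CAT}(0)$'' by ``$\mathrm{CAT}(\eps_m)$'' with $\eps_m\to 0$---in order to transfer~(i) and~(ii) cleanly. A secondary difficulty, absent from \cite{Babu} and from the piecewise-Euclidean and piecewise-hyperbolic setting of \cite{BH}, is the lack of local compactness: $N$ and the $N^{(m)}$ may meet infinitely many simplices, so one must use the ``finitely many shapes'' form of the link condition and replace Gromov--Hausdorff limits by uniform convergence of distance functions, which in turn relies on geodesics of bounded length crossing only boundedly many cells.
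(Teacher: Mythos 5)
Your overall strategy (check the three local cases, approximate near edge points and vertices, apply Cartan--Hadamard to the approximants, pass to the limit) is the same in spirit as the paper's, and the ``only if'' direction, the interior-of-triangle case, and the limiting step are fine. But the step you yourself flag as the main obstacle is a genuine gap, and the fallback you propose for it does not work. In a piecewise Euclidean approximation by comparison triangles, the link condition at the new vertices lying on an edge $e$ of $X$ is obtained only up to errors of uncontrolled sign: the angle contributed by a page $T_i$ differs from $\pi-\int\kappa_{T_i}\,ds$ by correction terms quadratic in the mesh, and the side lengths along $e$ are chord lengths measured in different pages, which do not even agree (already for two flat half-planes glued along a parabola, where $\kappa_{T_1}+\kappa_{T_2}\equiv 0$, the intrinsic geodesic between consecutive subdivision points is the chord in one page and the boundary arc in the other, so any consistent choice produces either a length mismatch or an angle deficit, and the girth of the link drops strictly below $2\pi$ even though the space is flat). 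The repair you suggest --- prove $N^{(m)}$ is $\mathrm{CAT}(\eps_m)$ and let $\eps_m\to 0$ --- is unsound: a Euclidean cone over a circle of length $2\pi-\delta$ is not $\mathrm{CAT}(\kappa)$ for \emph{any} $\kappa$, no matter how small $\delta$ is, because the cone is scale-invariant and an upper curvature bound near the apex would rescale to $\mathrm{CAT}(0)$, contradicting \cite[II.3.17]{BH}. So an approximant that misses the link condition by any amount has no upper curvature bound at all near those vertices, and no limit theorem can rescue it.

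The paper resolves exactly this exactness problem, in two different ways, and you would need one of them (or a substitute) to complete your argument. At interior points of edges it does not approximate at all: by condition (ii) only finitely many triangles contain a point of $e$ with positive geodesic curvature; their union $Y$ together with any one further triangle $T$ is a finite complex to which \cite[Thm~7.1]{Babu} applies, $Y\subset Y\cup T$ is an isometric embedding, and $\mathrm{St}(e)$ is assembled from the pieces $Y\cup T$ by the gluing theorem \cite[II.11.3]{BH}, which also disposes of infinite edge degree. At a vertex it does approximate, but not by Euclidean pieces: each triangle keeps its nonpositively curved Riemannian metric, and only its edges are replaced by piecewise geodesics whose exterior angles at the breakpoints are \emph{exactly} the integrals of the geodesic curvature over the corresponding subarcs, so condition (ii) transfers exactly to the approximants $\mathrm{St}(v)_n$; since these are not piecewise Euclidean, Bridson's theorem is unavailable, and the paper proves a separate taut-string Claim showing that $\mathrm{St}(v)_n$ is geodesic with geodesics crossing a bounded number of triangles before running Ballmann--Buyalo's argument and passing to the Gromov--Hausdorff limit via \cite[II.3.10]{BH}. (Your ``secondary difficulty'' about non-local-compactness could indeed be outsourced to \cite[I.7.19 and II.5.2]{BH} if your approximants were honestly piecewise Euclidean with finitely many shapes --- but it is precisely the piecewise Euclidean reduction that fails.) To salvage your route you would have to build the edge and vertex approximations with the same exact angle bookkeeping, e.g.\ by straightening edges within each page as above, rather than relying on an error-tolerant limit.
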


\subsection*{Motivation and relation to other results.} The term \emph{Tits Alternative} usually refers to the property that all finitely generated subgroups are either virtually solvable or contain a nonabelian free group.
The name comes from the theorem of Tits \cite{Tits} who proved that every finitely generated linear group is either virtually
solvable or contains a nonabelian free group. It is widely believed (see e.g.\ \cite[Quest~2.8]{BestvinaProblem},\cite{BridsonICM},\cite[Quest~7.1]{BridsonProblem},\cite[Prob~12]{FHT},\cite[\S5]{Cap})
that all \emph{\catz groups} (groups acting \emph{geometrically}, that is, properly and cocompactly, on \catz spaces) satisfy the Tits Alternative. 
This was proved only in a limited number of cases: see \cite{NosVin,SagWis,CS,MP0,MPb,OP} and references therein.

Groups acting geometrically on $2$-dimensional \catz complexes were studied thoroughly by Ballmann and Brin in \cite{BB}, where they proved the Rank Rigidity Conjecture for such groups. They also proved that such groups are either virtually abelian, or contain a nonabelian free group (statements of this type are sometimes called the \emph{Weak Tits Alternative} \cite{SagWis}). However, the Tits Alternative for such groups has been open till our current work. (E.g.\ in \cite[Prob~12]{FHT} the question on the Tits Alternative is asked specifically in dimension $2$.) Just recently, together with Norin we were able to show in \cite{NOP}, among other results, that 
the groups in question do not contain infinite torsion subgroups. This property might be seen as the first step towards the Tits Alternative. In \cite{OP} we proved the Tits Alternative for the class of $2$-dimensional recurrent complexes. This class contains all $2$-dimensional Euclidean buildings, $2$-dimensional systolic complexes, as well as some complexes outside the \catz setting. 

Regarding Corollary~\ref{cor:Artin}, for right-angled Artin groups the Tits Alternative follows from the work of Sageev and Wise \cite{SagWis}.
In our previous work \cite{OP} we showed the Tits Alternative for a subclass
of $2$-dimensional Artin groups, containing all large-type Artin groups. Recently, in \cite{MPb} we
proved the Tits Alternative for $2$-dimensional Artin groups of hyperbolic type, and in \cite{MP0} we proved it for FC-type Artin groups.  An approach to the Tits Alternative for subgroups of $2$-dimensional Artin groups acting not
almost freely on the modified Deligne complex has been developed by Martin \cite{AM}. 

As for Corollary~\ref{cor:Tame}, Cantat proved that the group of birational transformations $\mathrm{Bir}(S)$, for a projective surface $S$ over any field $\textbf k$, satisfies the Tits Alternative \cite{Can}. Earlier, Lamy proved the Tits Alternative for the group of polynomial automorphisms $\mathrm{Aut}(\C^2)$ \cite{L1}, and the proof extends to any field $\textbf k$ of characteristic $0$. The same statement for $\mathrm{Aut}(\textbf k^3)$ seems at the moment out of reach. However, we believe that for $\mathrm{Tame}(\textbf k^3)\subsetneq \mathrm{Aut}(\textbf k^3)$, with $\textbf k$ of characteristic $0$, one could study the subgroups acting not
almost freely on the $\mathrm{CAT(0)}$ complex $\textbf{X}$ of \cite{LP} by generalising the methods of the current paper.

\subsection*{Organisation}
In Section~\ref{sec:CAT(0)} we prove Theorem~\ref{thm:localCAT(0)}. In Section~\ref{sec:cocompact} we recall the method of invariant cocompact subcomplexes from \cite{OP}, which allows us to reduce
Theorem~\ref{thm:main} 
to Proposition~\ref{prop:decompose} that assumes the existence of edges of degree $\geq 3$ in our complex $X$. Under this assumption we can exclude the cases of virtually cyclic or $\Z^2$ groups in Section~\ref{sec:notZ2}. In technical Section~\ref{sec:folding}, which we recommend to skip at the first reading, we arrange our complex $X$ to have no `unfoldable' links. We give criteria for finding `rank~1' elements, and consequently free subgroups, in Section~\ref{sec:rank1}. In the absence of `rank~1' elements, we obtain a particular rationality property of the complex~$X$ in Section~\ref{sec:extrationality}. In Section~\ref{sec:shear} we give new criteria for distinguishing the endpoints of certain piecewise geodesics. 
Together with a Poincar\'e recurrence argument this allows us to prove Proposition~\ref{prop:decompose} in Section~\ref{sec:free}.

\subsection*{Acknowledgements}
We thank Werner Ballmann, Martin Bridson, Florestan Brunck, Pierre-Emmanuel Caprace, Koji Fujiwara, St\'ephane Lamy, and Paul Tee for helpful remarks.

\section{Characterisation of $\mathrm{CAT}(0)$ triangle complexes} 
\label{sec:CAT(0)}
In this section we prove Theorem~\ref{thm:localCAT(0)}, which characterises $\mathrm{CAT}(0)$ triangle complexes. The following result is known under the name of the Cartan--Hadamard theorem.

\begin{thm}[{\cite[II.4.1(2)]{BH}}]
\label{thm:globalCAT(0)}
Let $X$ be a complete connected metric space. If $X$ is simply connected and locally $\mathrm{CAT}(0)$, then it is $\mathrm{CAT}(0)$.
\end{thm}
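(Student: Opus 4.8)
The plan is to prove this local-to-global statement by constructing, for each pair of points, a geodesic realised as the unique local geodesic in a homotopy class, and then verifying the $\mathrm{CAT}(0)$ comparison inequality by subdividing geodesic triangles into pieces small enough to lie in $\mathrm{CAT}(0)$ neighbourhoods. Throughout I must avoid any local compactness hypothesis, since a general complete metric space --- and in particular a triangle complex with infinitely many isometry types of simplices --- need not be locally compact; the substitute for compactness will be the convexity of the metric on $\mathrm{CAT}(0)$ balls.

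The first step is to extract the local structure. From the hypothesis that $X$ is locally $\mathrm{CAT}(0)$, each point $p$ has a $\mathrm{CAT}(0)$ neighbourhood, hence an open ball $B(p,r_p)$ that is uniquely geodesic, in which geodesics vary continuously with their endpoints and $t\mapsto d(\gamma(t),\gamma'(t))$ is convex for geodesics $\gamma,\gamma'$. Completeness together with this local geodesic structure makes $X$ a length space in which sufficiently short paths can be straightened to local geodesics.

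The second step is to globalise geodesics using simple connectivity. I would fix a basepoint and argue that local geodesics issuing from $p$ can be uniquely homotoped rel endpoints. Concretely: (a) every path from $p$ to $q$ is homotopic rel endpoints to a local geodesic, produced by a length-nonincreasing homotopy that straightens the path inside overlapping $\mathrm{CAT}(0)$ balls, a Birkhoff-type process whose convergence is guaranteed by convexity rather than by compactness; and (b) two homotopic local geodesics with common endpoints coincide, because the convex function $t\mapsto d(c_0(t),c_1(t))$ built from a homotopy through local geodesics vanishes identically once it vanishes at both ends. Since $X$ is simply connected, (a) and (b) yield a unique geodesic between any two points, so $X$ is a (uniquely) geodesic space. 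Given a geodesic triangle $\Delta(x,y,z)$, I would then subdivide it using geodesics from $x$ to points finely spaced along $[y,z]$, iterating until every small triangle lies in a single $\mathrm{CAT}(0)$ ball and hence satisfies the $\mathrm{CAT}(0)$ comparison inequality; Alexandrov's patchwork lemma (gluing comparison triangles along a common side preserves the $\mathrm{CAT}(0)$ inequality) then propagates the inequality from the pieces to $\Delta(x,y,z)$, and refining the subdivision gives the comparison for the original triangle. As this holds for all triangles, $X$ is $\mathrm{CAT}(0)$.

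The main obstacle is the globalisation and unique straightening of local geodesics in the absence of local compactness. The crux is to replace the usual Arzel\`a--Ascoli extraction of a shortest representative by an argument driven entirely by the convexity of the distance function on $\mathrm{CAT}(0)$ balls, so that the straightening homotopy converges and the resulting local geodesic is the unique one in its homotopy class. Once geodesics exist and are unique, the comparison step is comparatively routine, since subdivision and patchwork are purely metric and require only that the small pieces be $\mathrm{CAT}(0)$.
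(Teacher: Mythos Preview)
The paper does not give a proof of this statement at all: Theorem~\ref{thm:globalCAT(0)} is simply quoted from \cite[II.4.1(2)]{BH} as the Cartan--Hadamard theorem and used as a black box throughout. So there is no ``paper's own proof'' to compare against; your proposal is an independent sketch of the Bridson--Haefliger argument.

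As a sketch of that argument your outline is broadly on the right track (straighten paths to local geodesics, use simple connectivity and convexity to get uniqueness, then Alexandrov patchwork for the comparison inequality), and you correctly flag that local compactness must be avoided. However, a few steps are more delicate than you indicate. First, the Birkhoff-type straightening you describe does not by itself converge in a general complete space; the actual proof in \cite{BH} does not iterate a global shortening but instead builds an exponential-type map from the space $\widetilde X_p$ of constant-speed local geodesics issuing from a basepoint~$p$, shows this map is a local isometry, and then invokes a covering-space argument together with simple connectivity to conclude it is a bijection. Second, your step~(b) asserting that two homotopic local geodesics coincide needs a continuous family of local geodesics interpolating between them, and producing that family is precisely the nontrivial content of the Cartan--Hadamard argument, not something one can assume. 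Third, the patchwork step requires knowing in advance that geodesics depend continuously on their endpoints, which again is a consequence of the exponential-map construction rather than an independent ingredient. None of this is fatal to your strategy, but as written the proposal papers over the genuine work.
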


We also have the following consequence of \cite[II.1.7(4) and II.4.14(2)]{BH}.
\begin{thm}
\label{thm:localisometry}
Let $X$ be a complete 
$\mathrm{CAT}(0)$ space. A piecewise local geodesic in $X$ with Alexandrov angles $\pi$ at the breakpoints is a geodesic.
\end{thm}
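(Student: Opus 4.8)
The plan is to reduce the statement to the two cited results from \cite{BH} by a standard gluing-of-geodesics argument. Let $c\colon [0,L]\to X$ be a piecewise local geodesic, with breakpoints $0=t_0<t_1<\cdots<t_n=L$, so that each $c|_{[t_{i-1},t_i]}$ is a local geodesic and the Alexandrov angle between the incoming and outgoing directions at each interior breakpoint $c(t_i)$ equals $\pi$. First I would recall that in a complete \catz space every local geodesic is a geodesic: this is exactly \cite[II.1.7(4)]{BH} (a local geodesic in a \catz space is a global geodesic). Hence each segment $c|_{[t_{i-1},t_i]}$ is itself a geodesic in $X$. So the real content is the passage across the breakpoints.

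Next I would argue by induction on the number of breakpoints. It suffices to show that the concatenation of two geodesic segments $[x,y]$ and $[y,z]$ meeting at $y$ with Alexandrov angle $\pi$ is a geodesic from $x$ to $z$; the general statement then follows by repeatedly merging adjacent segments (after each merge the concatenated piece is again a geodesic, and the remaining breakpoint angles are unchanged). For the two-segment case I would invoke \cite[II.4.14(2)]{BH}: in a \catz space, if the angle between the two geodesics $[y,x]$ and $[y,z]$ at the common point $y$ is $\pi$, then $[x,y]\cup[y,z]$ is a geodesic. (Concretely, one compares with a Euclidean triangle: the angle at $y$ being $\pi$ forces $d(x,z)=d(x,y)+d(y,z)$ by the \catz comparison inequality, because in the comparison triangle the side opposite an angle of $\pi$ has length equal to the sum of the other two sides, and the \catz inequality gives $d(x,z)\ge$ the comparison distance.) Therefore the merged path is a geodesic, completing the induction.

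The step I expect to require the most care is the bookkeeping at the breakpoints: after merging $c|_{[t_0,t_1]}$ and $c|_{[t_1,t_2]}$ into a geodesic $c|_{[t_0,t_2]}$, I must check that the Alexandrov angle of this new geodesic with $c|_{[t_2,t_3]}$ at $c(t_2)$ is still $\pi$. This is immediate because the direction at $c(t_2)$ of the merged geodesic $c|_{[t_0,t_2]}$ is the same as that of $c|_{[t_1,t_2]}$ (the incoming direction at an interior point of a geodesic is intrinsic and unchanged under taking a sub- or super-segment), so the hypothesis is preserved throughout the induction. One should also note at the outset that $X$ is geodesic (a complete \catz space is uniquely geodesic, \cite[II.1.4]{BH} or \cite[II.3.1]{BH}), so all the segments and comparison triangles invoked above exist and are unique. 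No other subtleties arise.
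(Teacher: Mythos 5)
Your argument is correct and amounts to the same reduction the paper intends: the paper gives no written proof, merely deducing the statement from \cite[II.1.7(4) and II.4.14(2)]{BH}, and your proof fills in the standard gluing induction using exactly these two facts (each piece is a global geodesic, and two geodesics meeting at Alexandrov angle $\pi$ concatenate to a geodesic, with the germ argument showing the angle hypothesis survives each merge). The only quibble is that you have likely swapped the content of the two citations --- in \cite{BH} it is II.4.14(2) that says local geodesics in a complete nonpositively curved space are global geodesics, while the angle-$\pi$ concatenation step comes from the angle/law-of-cosines comparison in II.1.7(4) --- but since you also spell out the comparison-triangle argument for that step explicitly, the proof is self-contained and sound.
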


Let $x$ be a point of a triangle complex $X$. Let $\lk^X_x$ be the metric graph that is the \emph{link} of $x$, as defined in \cite[page~176]{BB}. Namely, if $x$ is a vertex of $X$, then 
the vertices of $\lk^X_x$ correspond to the edges of $X$ containing $x$ and the edges of $\lk^X_x$ correspond to the triangles of $X$ containing $x$. The length of each edge is the angle in the corresponding triangle of $X$. Since we assumed that triangles containing~$x$ belong to only finitely many isometry classes of~$\sigma_T$, there are only finitely many possible edge lengths in a given~$\mathrm{lk}^X_x$. If $x$ lies in the interior of an edge $e$ of $X$, then $\lk^X_x$ has two vertices corresponding to the components of $e\setminus x$, and edges of length $\pi$ corresponding to the triangles of $X$ containing $e$. If $x$ lies in the interior of a triangle, then $\lk^X_x$ is a circle of length $2\pi$. We denote by $d_x^X$ (or, shortly, $d_x$) the length metric on~$\lk_x^X$. In \cite[\S2]{NOP} we explained how to identify $\lk^X_x$ with the completion of the space of directions at $x$ (see \cite[II.3.18]{BH}). Thus a local geodesic in~$X$ starting at~$x$ determines a point in $\mathrm{lk}^X_x$. The \emph{angle} at~$x$ between two such local geodesics is defined to be the distance between the two corresponding points in $\mathrm{lk}^X_x$ with respect to the metric $d_x$. As explained in \cite[\S2]{NOP}, if this angle is~$<\pi$, then it coincides with the Alexandrov angle, and if it is $\geq \pi$, then the Alexandrov angle equals $\pi$.

\begin{proof}[Proof of Theorem~\ref{thm:localCAT(0)}]
In the `only if' part, condition (i) follows from \cite[II.1A.6]{BH}. The proof of condition (ii) is identical to that in \cite[Thm~7.1]{Babu}, and the proof of condition (iii) was given in \cite[\S2]{NOP}. For the proof of the `if' part, suppose that a triangle complex $X$ satisfies conditions (i)--(iii). By condition~(i) and \cite[II.1A.6]{BH}, we have that $X$ is locally $\mathrm{CAT}(0)$ at any interior point of a triangle. 

Consider now an edge $e$ of $X$. Let $\mathrm{St}(e)$ be the union of all the closed triangles containing $e$. We will show that $\mathrm{St}(e)$ is $\mathrm{CAT}(0)$, which implies that $X$ is locally $\mathrm{CAT}(0)$ at any interior point $x$ of $e$, since the metrics on~$\mathrm{St}(e)$ and on $X$ coincide on a sufficiently small neighbourhood of $x$.
Let $Y\subset \mathrm{St}(e)$ be the union of the triangles $T$ for which there exists a point on~$e$ with positive geodesic curvature in $T$. By condition~(ii), there is at most one such triangle of given isometry type $T_0$ and given embedding $e\subset T_0$, so $Y$ has finitely many triangles. 
We denote this number by $m(e)$ for future reference. For each triangle $T$ of $\mathrm{St}(e)$ outside $Y$, denote $Y_T=Y\cup T$. By conditions~(i) and~(ii), and by \cite[Thm~7.1]{Babu}, we have that each $Y_T$ is locally $\mathrm{CAT}(0)$, hence $\mathrm{CAT}(0)$ by Theorem~\ref{thm:globalCAT(0)}. (Note that a geodesic in $Y_T$ might enter and exit a given triangle infinitely many times.) Furthermore, the inclusion $Y\subset Y_T$ is an isometric embedding, since points of $e$ have nonpositive geodesic curvature in $T$. By \cite[II.11.3]{BH}, the union~$\mathrm{St}(e)$ of~$Y_T$ is $\mathrm{CAT}(0)$, as desired.

Consider now a vertex $v$ of $X$. After possibly subdividing $X$, we can assume that in each triangle $T$ containing $v$, the local geodesic $\gamma_T$ starting at $v$ and bisecting the angle of $T$ at $v$ ends at the opposite side of $T$. We will prove that the union $\mathrm{St}(v)$ of all the closed triangles and edges containing $v$ is $\mathrm{CAT}(0)$.  This will imply that $X$ is locally $\mathrm{CAT}(0)$ at $v$, since the metrics on~$\mathrm{St}(v)$ and on $X$ coincide on a sufficiently small neighbourhood of $v$.

\smallskip

\noindent \textbf{Claim.} \emph{$\mathrm{St}(v)$ is geodesic, and there is $M>0$ such that each geodesic in~$\mathrm{St}(v)$ intersects the interiors of at most $M$ triangles.}

\smallskip

To justify the Claim, we employ the idea of a taut string \cite[I.7.20]{BH}. Let $\theta$ be the minimum of $\pi$ and the minimum length of an edge in $\lk_v^X$, and set $N=2+\frac{\pi}{\theta}$. Let $M=1+ N(1+\max_e m(e))$, where $m(e)$ is defined as above and the maximum is taken over all the edges $e$ of $\mathrm{St}(v)$ containing~$v$. For each such edge $e$, let $\mathrm{St}'(e)\subset \mathrm{St}(e)$ be the closure of the component containing $e$ of $\mathrm{St}(e)\setminus \bigcup_{T\subset \mathrm{St}(e)}\gamma_T$, for $\gamma_T$ as above. 
Since $\mathrm{St}'(e)$ is locally convex in $\mathrm{St}(e)$, it is $\mathrm{CAT}(0)$ \cite[II.4.14(1)]{BH}.

For $x,y\in \mathrm{St}(v)$, a \emph{string} between $x$ and $y$ is a sequence of edges $e_1,\ldots, e_n$ of $\mathrm{St}(v)$ containing $v$ and points $x_0=x,x_1,\ldots, x_n=y$ with $\mathrm{St}'(e_i)$ containing both $x_{i-1}$ and $x_i$. The \emph{length} of the string is the sum $\Sigma_{i=1}^nd_i(x_{i-1},x_i)$, where $d_i$ is the metric on $\mathrm{St}'(e_i)$. The distance between $x$ and $y$ in $\mathrm{St}(v)$ is the infimum of the lengths of strings between $x$ and $y$. A string is \emph{taut}, if $n\leq 2$ or 
\begin{itemize}
\item
for each $0\leq i\leq n,$ the point $x_i$ is distinct from $v$, and
\item
for each $0<i<n,$ the point $x_i$ belongs to one of the $\gamma_T$ defined above, and 
\item
for each $0<i<n,$ the concatenation of geodesics $x_{i-1}x_i$ in $\mathrm{St}'(e_i)$ and $x_{i}x_{i+1}$ in $\mathrm{St}'(e_{i+1})$ is a geodesic in $\mathrm{St}'(e_i)\cup \mathrm{St}'(e_{i+1})$. 
\end{itemize}

We now justify that for each string $(e_i),(x_i)$ between $x$ and $y$ we can find a taut string between $x$ and $y$ whose length does not exceed the length of $(e_i),(x_i)$. Indeed, by discarding some $x_i$, we can first assume that consecutive 
$e_i$ are distinct and so for each $0<i<n$ the point $x_i$ belongs to one of the $\gamma_T$. Since $\gamma_T$ are compact, there is a choice of $x'_i$ in the same~$\gamma_T$ as $x_i$, minimising the length of the string $(e_i),(x'_i)$. Then the concatenation of geodesics $x'_{i-1}x'_i$ in~$\mathrm{St}'(e_i)$ and $x'_{i}x'_{i+1}$ in~$\mathrm{St}'(e_{i+1})$ is a geodesic in~$\mathrm{St}'(e_i)\cup \mathrm{St}'(e_{i+1})$. Finally, if an $x'_{i-1}$ equals~$v$, and $x'_{i}\neq x_n$, then for $x'_i\in \gamma_T$, the subpath of $\gamma_T$ from $x'_{i-1}$ to $x'_{i}$ is a geodesic in both $\mathrm{St}'(e_i)$ and $\mathrm{St}'(e_{i+1})$, and consequently we can discard $x'_{i}$ and $e_i$ from the string. Repeating the argument we arrive at $i-1=n-1$ or $i-1=n$. Analogously, we obtain $i-1=1$ or $i-1=0$, and so $n\leq 2$.

We will now show that a taut string satisfies $n\leq N$. We can assume $n>2$, and so none of $x_i$ equals $v$. For $0<i<n,$ let $\theta_i$ be the Alexandrov angle at $x_{i}$ in $\mathrm{St}'(e_{i+1})$ between the geodesics $x_{i}x_{i+1}$ and $x_{i}v$.  
The concatenation of geodesics $x_{i-1}x_i$ in $\mathrm{St}'(e_i)$ and $x_{i}x_{i+1}$ in $\mathrm{St}'(e_{i+1})$ is a geodesic in $\mathrm{St}'(e_i)\cup\mathrm{St}'(e_{i+1})$, which is $\mathrm{CAT}(0)$ by \cite[II.11.3]{BH}. Consequently, by the definition of $\theta$ we have $\theta_i\geq \theta_{i-1}+\theta$. Thus $\pi\geq \theta_{n-1}\geq (n-2)\theta +\theta_1$, and so $n-2\leq \frac{\pi}{\theta}=N-2$.

Since there are finitely many isometry types of simplices in $\mathrm{St}(v)$, and each taut string satisfies $n\leq N$, the distance between $x$ and $y$ in $\mathrm{St}(v)$ is realised by the length of some taut string $(e_i),(x_i)$. Then the concatenation of all the geodesics $x_{i-1}x_i$ in $\mathrm{St}'(e_i)$ is a geodesic between $x$ and $y$, proving that $\mathrm{St}(v)$ is geodesic. Furthermore, for any geodesic $\gamma$ from $x$ to $y$ in~$\mathrm{St}(v)$, one can choose points on $\gamma$ forming a taut string. Since any taut string satisfies $n\leq N$, and we have that $\gamma$ intersects the interiors of at most $1+n(1+\max_e m(e))$ triangles, the Claim follows.

\smallskip

Returning to the proof of Theorem~\ref{thm:localCAT(0)}, we follow the scheme in \cite[Thm~7.1]{Babu} to find a sequence of $\mathrm{CAT}(0)$ spaces Gromov--Hausdorff converging to $\mathrm{St}(v)$. Namely, realise each (isometry type of a) triangle $T$ of $\mathrm{St}(v)$ as $T\subset \R^2$, with metric induced from some smooth Riemannian metric of nonpositive Gaussian curvature on $\R^2$ defined in a neighbourhood $U$ of~$T$. Denote by $e,f$ the edges of $T$ containing~$v$, and by $g$ the remaining edge of~$T$. Let $l$ denote the length of $e$. For each $n>0$, we decompose $e$ into paths $a^1\cdot a^2 \cdots a^n$ of length $\frac{l}{n}$, and we define $\kappa^k$ to be the integral of the geodesic curvature along~$a^k$.
Let $e_n$ be the piecewise geodesic in $U$ that starts at~$v$ tangent to~$e$, has $n$ locally geodesic pieces of length~$\frac{l}{n}$, and exterior angle at the $k$-th breakpoint that equals $\kappa^k$. For $n$ sufficiently large the path $e_n$ exists, and they $C^1$-converge to~$e$ as $n$ tends to $\infty$. We define paths~$f_n$ analogously. We define $g_n$ to be any piecewise geodesics joining the endpoints of~$e_n$ and~$f_n$ and $C^1$-converging to $g$.
This gives us a triangle $T_n\subset U$ bounded by $e_n,f_n$ and~$g_n$, whose boundary is piecewise geodesic (one can pass to a union of triangles with locally geodesic boundary by subdividing). Furthermore, we have a map $T_n\to T$ whose restriction to $e_n,f_n$ preserves length and which is bilipschitz with 
the bilipschitz constant converging to $1$ as $n$ tends to $\infty$. Glueing various $T_n$ along the sides corresponding to the ones of $T$ that we glued to form $\mathrm{St}(v)$ yields a triangle complex that we call $\mathrm{St}(v)_n$. Then $\mathrm{St}(v)$ is a Gromov--Hausdorff limit of~$\mathrm{St}(v)_n$.
Note that since $\mathrm{St}(v)$ satisfied conditions (i)--(iii), we have that $\mathrm{St}(v)_n$ satisfies conditions (i)--(iii).
Since $\mathrm{St}(v)_n$ has locally geodesic edges, and is geodesic by the Claim (applied to $\mathrm{St}(v)_n$ instead of $\mathrm{St}(v)$), the same proof as for \cite[Thm~7.1, case 1]{Babu} shows that $\mathrm{St}(v)_n$ is locally $\mathrm{CAT}(0)$, hence $\mathrm{CAT}(0)$ by Theorem~\ref{thm:globalCAT(0)}. Consequently, by \cite[II.3.10]{BH} we have that $\mathrm{St}(v)$ is $\mathrm{CAT}(0)$.
\end{proof}

We have the following immediate consequence of Theorem~\ref{thm:localCAT(0)}.

\begin{cor}
\label{cor:subcomplex}
If $X$ is a triangle complex that is locally $\mathrm{CAT}(0)$, then all of its subcomplexes are locally $\mathrm{CAT}(0)$.
\end{cor}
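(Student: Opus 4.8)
The plan is to deduce this directly from the characterisation in Theorem~\ref{thm:localCAT(0)}. Given a subcomplex $Y\subseteq X$, I would equip it with the restrictions of the Riemannian metrics $\sigma_T,\sigma_e$ to its triangles and edges; this makes $Y$ a triangle complex in its own right, to which Theorem~\ref{thm:localCAT(0)} applies. It then suffices to check that conditions (i)--(iii) are inherited by $Y$ from $X$.

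Conditions (i) and (ii) are immediate. Every triangle $T$ of $Y$ is a triangle of $X$ carrying the same metric $\sigma_T$, so its Gaussian curvature is $\leq 0$ at each interior point; and every pair of distinct triangles of $Y$ sharing an edge $e$ is such a pair in $X$, so the sum of their geodesic curvatures at any interior point of $e$ is $\leq 0$. For condition (iii), fix a vertex $v$ of $Y$. The vertices of $\lk^Y_v$ are the edges of $Y$ through $v$, a subset of the edges of $X$ through $v$; the edges of $\lk^Y_v$ are the triangles of $Y$ through $v$, a subset of those of $X$ through $v$; and the length of each such edge, being the corresponding angle in the triangle, is the same computed in $Y$ or in $X$. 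Hence $\lk^Y_v$ is isometric to a subgraph of $\lk^X_v$. Any embedded cycle in $\lk^Y_v$ is also an embedded cycle in $\lk^X_v$, so its length is at least the girth of $\lk^X_v$, which is $\geq 2\pi$ by hypothesis; and if $\lk^Y_v$ contains no cycle, its girth is infinite. Either way the girth of $\lk^Y_v$ is $\geq 2\pi$, so (iii) holds for $Y$. Applying Theorem~\ref{thm:localCAT(0)} to $Y$ then gives that $Y$ is locally $\mathrm{CAT}(0)$.

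There is no serious obstacle; the only point deserving a moment's care is that passing to a subgraph with the same edge lengths cannot create a shorter cycle, which is precisely what makes the girth condition robust under taking subcomplexes.
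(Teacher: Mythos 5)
Your proposal is correct and is exactly the paper's intended argument: the paper states the corollary as an immediate consequence of Theorem~\ref{thm:localCAT(0)}, the point being precisely that conditions (i) and (ii) only involve triangles and pairs of adjacent triangles already present in $X$, while for (iii) the link of a vertex in a subcomplex is a subgraph of its link in $X$ with the same edge lengths, so no shorter cycle can appear. Your write-up simply makes these inheritance checks explicit, which matches the paper's (unwritten) proof.
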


Note that while we did not apply the Claim to $\mathrm{St}(v)$ in the proof of Theorem~\ref{thm:localCAT(0)}, it will be used in the following remarks.

\begin{rem}
\label{rem:semisimple} Suppose that $X$ is a $\mathrm{CAT}(0)$ triangle complex with finitely many isometry types of simplices. Then the constant $M$ in the Claim does not depend on $v$. As in the `$\Psi_n\Rightarrow \Phi_n$' part of the proof of \cite[I.7.28]{BH}, we obtain that for each $l$ there is $M'>0$ such that each geodesic in $X$ of length $\leq l$ intersects the interiors of at most $M'$ simplices. Using this in the place of \cite[Lem~1]{B} in the proof of \cite[Lem 2 and Thm~A]{B}, we obtain that every simplicial isometry $g$ of $X$ is \emph{semisimple}: it fixes a point, or is \emph{loxodromic}, meaning that there is a geodesic line $\omega$ in $X$ (called an \emph{axis}) 
such that $g$ preserves $\omega$ and acts on it as a nontrivial translation. 
\end{rem}

\begin{rem} 
\label{rem:translation}
Suppose that $X$ is a $\mathrm{CAT}(0)$ triangle complex with finitely many isometry types of simplices. Then the set of translation lengths of simplicial isometries of $X$ is a discrete subset of $[0,\infty)$, which is proved using the Claim exactly as \cite[Prop]{B}. Similarly we obtain the following:

Let $X$ be a $\mathrm{CAT}(0)$ triangle complex
with a subcomplex $Y$ on which some group of simplicial isometries of $X$ acts coboundedly. Since any metric ball in $X$ intersects finitely many isometry types of simplices, we have that each bounded neighbourhood of $Y$ intersects finitely many isometry types of simplices.
Then for each simplicial isometry~$g$ of $X,$ the set $\inf_{y\in Y}d(y,gy)$ attains its infimum, which we denote $|g|_Y$. Moreover, the set of $|g|_Y$ over all 
simplicial isometries $g$ of~$X$ is a discrete subset of $[0,\infty)$.
\end{rem}

\section{$G$-cocompact subcomplexes}
\label{sec:cocompact}

Let $X$ be a simplicial complex with an action of a group~$G$. We say that a subcomplex $Z\subset X$ is
an \emph{\ics \ with respect to~$G$} (shortly \emph{$G$-\sics}) if $Z$ is $G$-invariant, and the quotient $Z/G$
is compact. Note that a $G$-\sics\ is not required to be connected.

A $2$-dimensional simplicial complex is \emph{essential} if every edge has degree at least $2$, and none of connected components is a single vertex. An essential simplicial complex is \emph{thick} if it has an edge of degree at least $3$.

A \emph{disc diagram} $D$ is a compact contractible simplicial complex with a fixed embedding in $\R^2$. Its \emph{boundary path} is the attaching map of
the cell at $\infty$. If $X$ is a simplicial complex, \emph{a disc diagram in $X$} is a nondegenerate simplicial map $\varphi \colon D\to X$, and its
\emph{boundary path} is the composition of the boundary path of $D$ and $\varphi$. We say that $\varphi$ is \emph{reduced} if it maps triangles sharing an edge to two distinct triangles. 
By \cite[Rem~3.6]{OP}, for each contractible closed edge-path $\alpha$ in a simplicial complex $X$, there is a reduced disc diagram in $X$ with boundary path $\alpha$.

A group $G$ acts on a simplicial complex $X$ \emph{without inversions} if for any $g\in G$ stabilising a simplex $\sigma$ of $X$ we have that $g$ fixes $\sigma$ pointwise. More generally, we say that $G$ acts \emph{without weak inversions} if for each vertex $v$ of~$X$ there is no $g\in G$ sending $v$ to a distinct vertex in a common edge.

The first ingredient in our proof of 
Theorem~\ref{thm:main} 
is the following earlier result.
\begin{prop}[{\cite[Prop 3.7]{OP}}]
\label{cor:OP}
Let $G$ be a finitely generated group acting almost freely and without inversions on a simply connected $2$-dimensional simplicial complex $X$ that contains no simplicial $2$-spheres. If $X$ contains no thick {$G$-\sics}, then
$G$ is virtually cyclic, or virtually $\mathbb{Z}^2$, or contains a nonabelian free group.
\end{prop} 

The second ingredient in the proof of 
Theorem~\ref{thm:main} 
is the following, the proof of which will occupy the present article.

\begin{prop}
\label{prop:decompose}
Let $G$ be a group acting almost freely and without weak inversions
on a $\mathrm{CAT}(0)$ triangle complex $X$ that is an increasing union of connected essential $G$-\sics\ 
If $X$ contains an edge of degree $\geq 3$, then $G$ contains a nonabelian free group.
\end{prop}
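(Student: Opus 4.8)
The plan is to produce a nonabelian free subgroup of $G$ by finding two loxodromic elements whose axes are ``independent'' in a strong sense, and then applying a ping-pong argument. The starting point is the hypothesis that $X$ contains an edge $e$ of degree $\geq 3$. Since $X$ is an increasing union of connected essential $G$-cocompact subcomplexes, the $G$-translates of edges of degree $\geq 3$ form a cocompact family, and in particular $G$ acts cocompactly on a connected thick subcomplex. The first step would be to pass, via Theorem~\ref{thm:localCAT(0)} and Corollary~\ref{cor:subcomplex}, to a well-chosen $G$-invariant cocompact subcomplex $Z$ on which the geometry is controlled, and to record that by Remarks~\ref{rem:semisimple} and~\ref{rem:translation} every element of $G$ is semisimple, the set of translation lengths is discrete, and $|g|_Y$ is attained and takes discretely many values. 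Because the action is almost free and without weak inversions, infinite-order elements are plentiful: any infinite group acting almost freely has an infinite-order (hence, by semisimplicity, loxodromic) element.

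The heart of the argument is to analyse the branching along edges of degree $\geq 3$. I would fix an edge $e$ of degree $\geq 3$ and consider a loxodromic element $g$ whose axis $\omega$ crosses the interior of $e$, or runs close to it. The key geometric input is a ``rank~1'' type phenomenon: an axis passing transversally through a branching edge cannot be contained in a flat half-plane on the branching side, because the three (or more) triangles at $e$ force the link $\lk^X_x$, for $x$ in the interior of $e$, to have girth strictly exceeding $2\pi$ in the relevant directions (it is a graph with a double edge of length $\pi$ plus an extra edge). One then chooses a second loxodromic $h$ so that the axis of $h$ is a $G$-translate of $\omega$ that diverges from $\omega$ — such a translate exists because $Z/G$ is compact while $Z$ is unbounded (as $G$ is infinite), so the $G$-orbit of $\omega$ contains lines at bounded Hausdorff distance from but not equal to $\omega$. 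Using the flat-strip theorem and the girth condition, any two distinct axes through the branching locus are \emph{not} parallel; combined with the rank~1 behaviour at $e$, the boundary points $\omega^{\pm\infty}$ and $(g\omega)^{\pm\infty}$ in $\partial X$ are four distinct points, and high powers $g^N$ and $h^N = (g^{k}gg^{-k})^N$ play ping-pong on $\partial X$ in the usual North–South dynamics sense.

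I expect the main obstacle to be the \emph{independence of the axes}: ruling out that all loxodromic elements whose axes meet branching edges share, up to finite index, a common endpoint at infinity or a common flat. In the locally compact case one invokes standard rank~1 machinery and properness of the action, but here $X$ need only be cocompact after passing to $Z$, and the metric on triangles is merely piecewise smooth Riemannian, so one cannot quote the CAT(0) ping-pong literature verbatim. Handling this will presumably require: (a) a careful use of the Claim from the proof of Theorem~\ref{thm:localCAT(0)} to get uniform control on how geodesics cross simplices, so that ``flat'' directions emanating from $e$ can be classified; (b) an argument, using cocompactness of $Z$ and discreteness of translation lengths, that a loxodromic whose axis lies in a maximal flat cannot have that flat meet the branching edge transversally; and (c) combining a loxodromic crossing $e$ with one of its translates to break any residual parallelism. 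Once the two endpoints-pairs are separated, the ping-pong lemma on $\partial X$ (or directly on $X$ via the Tits-cone/contracting geodesic estimates) yields the free subgroup $\langle g^N, h^N\rangle \cong F_2$, completing the proof.
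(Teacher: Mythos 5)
There is a genuine gap at the heart of your argument: the claim that an axis crossing a branching edge transversally exhibits ``rank~1'' behaviour is false. The link of an interior point of an edge of degree $3$ is a graph with two vertices joined by three edges, each of length exactly $\pi$; its girth is exactly $2\pi$, not more, and a geodesic crossing such an edge perpendicularly can perfectly well be contained in a flat plane (take two of the three Euclidean half-planes glued along the edge). Thick $2$-dimensional $\mathrm{CAT}(0)$ complexes with no rank~1 isometries at all exist and are exactly the hard case -- e.g.\ $2$-dimensional Euclidean buildings such as the Bruhat--Tits building of $SL_3(\Q_p)$, or triangulations of (tree)$\times\R$. For these, no choice of loxodromic $g$ has a contracting axis, there is no North--South dynamics on the boundary, and your ping-pong scheme (items (a)--(c) included) cannot be carried out; yet these groups do contain free subgroups, so the statement is true but must be proved by other means. (A smaller slip: your candidate second element $h=g^kgg^{-k}$ is just $g$, so even granting rank~1 you have not produced an independent axis.)

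The paper's proof is structured precisely around this obstruction. Rank~1 machinery (Lemma~\ref{prop:contracting}, Lemma~\ref{lem:Rtree}, Proposition~\ref{cor:contracting}) is invoked only when a \emph{curved} axis exists, i.e.\ an axis with angle $>\pi$ at a vertex; Proposition~\ref{prop:rank1}, after the folding reduction of Proposition~\ref{prop:unfolding}, shows that otherwise $X$ may be taken piecewise Euclidean and extrational. In that remaining (building-like) case the free subgroup is produced by a completely different mechanism: a Liouville-measure Markov chain on directions perpendicular (up to angles in $\frac{\pi}{q'}\Z$) to the branching locus, Poincar\'e recurrence to find geodesics running perpendicularly from the degree-$3$ edge $e$ to $G$-translates of $e$ in prescribed triangles $T_a,T_b,T_c$, and then an equivariant map from a tree with a free $F_2$-action into $X$; injectivity of the resulting homomorphism $F_2\to G$ is proved not by contraction or boundary dynamics but by Proposition~\ref{prop:sheared}, which shows that ``sheared geodesics'' are never closed. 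You would need to replace the core of your proposal by an argument of this kind (or some other device that works in the absence of any contracting element) for the proof to go through.
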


We now show how 
Theorem~\ref{thm:main} 
follows from these two ingredients.

\begin{proof}[Proof of Theorem~\ref{thm:main}]
By passing to a subdivision (see \cite[Lem~2.1]{NOP}), we can assume that $G$ acts without weak inversions. By Proposition~\ref{cor:OP}, we can assume that $X$ contains a thick $G$-\sics\ $Z_1$. We will prove that $G$ contains a nonabelian free group.
By passing to a connected component of $Z_1$ and its stabiliser $G'$ in $G$ (which is finitely generated, since it acts properly and cocompactly on a connected complex), we can assume that $Z_1$ is connected.
If $Z_1$ contains a closed edge-path that is not contractible in $Z_1$, repeatedly attaching to~$Z_1$ the images of 
reduced disc diagrams and their $G$-translates, we obtain an increasing sequence $Z_1\subset Z_2\subset \cdots$
of connected essential {$G$-\sics} such that their union $X'$ is simply connected. By Corollary~\ref{cor:subcomplex} we have that $X'$ is locally $\mathrm{CAT}(0)$, and so $X'$ is $\mathrm{CAT}(0)$ by Theorem~\ref{thm:globalCAT(0)}.
It remains to apply Proposition~\ref{prop:decompose}.
\end{proof}

\section{Not virtually cyclic or $\Z^2$}
\label{sec:notZ2}

The first step of the proof of Proposition~\ref{prop:decompose} is the following. 

\begin{lem}
\label{lem:noteasy}
Let $G$ be a group acting almost freely 
on a $\mathrm{CAT}(0)$ triangle complex $X$.
If $X$ contains a subcomplex $Z$ that is a connected thick $G$-\sics,
then
\begin{enumerate}[(i)]
\item
$G$ is not virtually cyclic, and
\item
$G$ is not virtually $\Z^2$.
\end{enumerate}
\end{lem}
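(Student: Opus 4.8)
The plan is to argue by contradiction, assuming $G$ is virtually cyclic or virtually $\Z^2$, and derive that the thick $G$-\sics\ $Z$ cannot exist. The starting observation is that a group $G$ with a connected cocompact subcomplex $Z$ necessarily acts cocompactly on $Z$, so we may invoke the structure theory of groups acting geometrically (properly, since the action is almost free, and cocompactly) on a $\mathrm{CAT}(0)$ space. In particular, by Remark~\ref{rem:semisimple} every element of $G$ is semisimple, and either $G$ fixes a point of $X$ (impossible since $Z$ is thick hence infinite, and the action is almost free), or $G$ contains a loxodromic element $g$ with an axis $\omega$.

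First I would handle case (i). If $G$ is virtually cyclic, then after passing to a finite-index subgroup $G_0=\langle g\rangle$ we have that $G_0$ acts cocompactly on $X$ via the axis $\omega$: indeed the minimal set $\mathrm{Min}(g)$ splits as $\omega\times C$ with $C$ bounded (since the action is almost free, $C$ cannot contain a line), and $G_0$ acts cocompactly on a connected complex, so $X$ deformation retracts $G_0$-equivariantly onto a quasiline. But then $Z/G$ being compact and $G$ virtually cyclic forces $Z$ to be quasi-isometric to $\R$, which contradicts thickness: an edge of degree $\geq 3$ in a connected cocompact-under-a-virtually-cyclic-group complex would produce, under the $g$-action, infinitely many branch edges along $\omega$, and following three distinct triangle-directions at such an edge gives a subcomplex whose universal cover (note $X$ is already $\mathrm{CAT}(0)$, hence contractible) has more than two ends, whereas a virtually cyclic group has a two-ended or finite Cayley graph. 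More cleanly: a thick connected complex on which a virtually cyclic group acts cocompactly, being aspherical with $\mathrm{CAT}(0)$ universal cover, would make $G$ act properly cocompactly on a contractible complex with infinitely many ends while $G$ is virtually $\Z$, contradicting the fact that virtually $\Z$ groups are two-ended.

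For case (ii), suppose $G$ is virtually $\Z^2$. Passing to a finite-index $G_0\cong\Z^2$, the Flat Torus Theorem \cite[II.7.1]{BH} gives an isometrically embedded $G_0$-invariant flat plane $F=\mathrm{Min}(G_0)\subset X$ (again the perpendicular factor is bounded by almost freeness), and $G_0$ acts cocompactly on $F\cong\R^2$. Since $G_0$ has finite index in $G$ and $Z$ is connected and $G$-cocompact, $X$ is within bounded Hausdorff distance of $F$, so $X$ is quasi-isometric to $\R^2$. I would then argue that a thick edge $e\subset X$ cannot be accommodated: the $G$-orbit of $e$ is a set of branch edges that is coarsely dense in $X\sim\R^2$, and at each such edge the link contains at least three points, so cutting $X$ along the orbit of $e$ disconnects it into more than the pieces a plane allows. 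The cleanest route is to pass to the simply connected locally $\mathrm{CAT}(0)$ super-complex $X'$ as in the proof of Theorem~\ref{thm:main} (attaching reduced disc diagrams), note $X'$ is $\mathrm{CAT}(0)$ and still carries a cocompact $G$-action with $X'$ quasi-isometric to $\R^2$; then a line $\omega'$ transverse to the orbit of $e$ meets branch edges, and near a branch edge the $\mathrm{CAT}(0)$ geometry forces a third half-plane emanating, contradicting the fact that $\mathbb{R}^2$-like $\mathrm{CAT}(0)$ spaces (which are flats, being two-dimensional $\mathrm{CAT}(0)$ and quasi-isometric to $\R^2$) have no branching. Indeed, a two-dimensional $\mathrm{CAT}(0)$ space quasi-isometric to $\R^2$ on which a group acts geometrically is, by \cite{BB} (rank rigidity / the flat being all of $X'$), isometric to the Euclidean plane, which has no edges of degree $\geq 3$.

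\textbf{Main obstacle.} The delicate point is ruling out thickness rather than merely establishing the quasi-isometry type: I need to convert the combinatorial statement ``there is an edge of degree $\geq 3$'' into a geometric obstruction. I expect to lean on the fact that the universal cover $X'$ of the (essential, connected) complex is $\mathrm{CAT}(0)$ and that a branch edge persists under passing to $X'$, so that a $\mathrm{CAT}(0)$ space quasi-isometric to a line or plane cannot contain a locally branching $1$-dimensional subset that is coarsely dense. Making this rigorous likely requires the observation that in a $2$-dimensional $\mathrm{CAT}(0)$ complex, a degree-$\geq 3$ edge forces three geodesic rays pairwise at angle $\pi$ from its midpoint (since each adjacent triangle contributes an edge of length $\pi$ in the link of an interior point of $e$), hence a tripod, hence a point whose removal disconnects a neighbourhood into $\geq 3$ pieces — which is impossible in a space bilipschitz to $\R$ or $\R^2$. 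The bookkeeping to promote this from ``one edge'' to ``the orbit fills $Z$'' (using $Z/G$ compact and $G$ virtually abelian) is the part that needs care but should be routine.
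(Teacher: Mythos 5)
There are genuine gaps in both parts. For (i), the decisive step in your sketch --- that thickness of $Z$ contradicts $Z$ being quasi-isometric to $\R$, via ``a universal cover with more than two ends on which $G$ acts'' --- is both unjustified and unsalvageable as an intrinsic statement about $Z$. First, $G$ does not act on the universal cover of $Z$; only an extension of $G$ by $\pi_1 Z$ does, and that extension need not be virtually cyclic. Second, a connected, essential, thick, locally $\mathrm{CAT}(0)$ triangle complex admitting a proper cocompact action of $\Z$ and quasi-isometric to $\R$ does exist: take $\Theta\times\R$ with $\Theta$ the theta-graph, suitably triangulated (its universal cover, a tree times $\R$, is even one-ended, so the end count you invoke fails too). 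Hence no argument using only the intrinsic geometry of $Z$ can work; one must use the ambient $\mathrm{CAT}(0)$ complex $X$ in an essential way. This is exactly what the paper does: from the three perpendicular directions at a degree-$\geq 3$ edge it produces, via the Liouville-measure/ergodicity argument of Ballmann--Brin (which needs essentiality and the cocompact action on $Z$), locally geodesic rays in $Z$ avoiding $Z^0$ and transverse to $Z^1$; these are genuine geodesics of $X$ by Theorem~\ref{thm:localisometry}, and the piecewise geodesics with angles $>\frac{5\pi}{6}$ at the breakpoints show the three rays are pairwise non-asymptotic, with Tits angle $>\frac{2\pi}{3}$. That extension-to-infinity step is the real content of (i); your ``tripod at the midpoint'' only gives germs of rays, and promoting it is not routine bookkeeping.

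For (ii), your very first moves are not available. Remark~\ref{rem:semisimple} (semisimplicity) requires $X$ to have finitely many isometry types of simplices, which Lemma~\ref{lem:noteasy} does not assume (the standing hypothesis only controls shapes meeting each metric ball), so you cannot apply the Flat Torus Theorem directly in $X$; moreover $G$ acts cocompactly on $Z$, not on $X$, so the claims that $X$ is within bounded Hausdorff distance of the flat, that $X$ is quasi-isometric to $\R^2$, and that the filled-in complex $X'$ ``still carries a cocompact $G$-action'' are all false --- attaching disc diagrams yields only an increasing union of cocompact subcomplexes, not a cocompact one. The paper's proof is structured precisely to overcome these obstacles: it attaches a single disc diagram to obtain a cocompact $Z'$ in which the commutation relation becomes visible, passes to the universal cover $\widetilde Z'$ (which does have finitely many shapes), lifts to commuting elements generating a $\Z^2$ inside the extension $\widetilde G$, applies Lemma~\ref{lem:flattorus} there to get a flat plane, and then proves that this plane maps isometrically into $X$ (a curvature count along edges plus local injectivity on links and Theorem~\ref{thm:localisometry}). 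Finally, the contradiction is not ``a flat has no branching'' (which would need the false cocompactness on $X$), but that the three rays from part (i), lying in $Z\subset Z'$, would be asymptotic to rays in the flat pairwise at angle $>\frac{2\pi}{3}$, which is impossible in a Euclidean plane. None of these steps appears in your proposal, so as written it does not constitute a proof.
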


In the proof of Lemma~\ref{lem:noteasy} we will need the following vocabulary. Let $X$ be a triangle complex.
We say that a ray $\gamma\colon [0,\infty)\to X$ or a path $\gamma\colon [0,1]\to X$ \emph{starts} (resp.\ \emph{ends}) in a simplex $\sigma$, if for some $\eps>0$ the points $\gamma(0,\eps)$ (resp.\ $\gamma(1-\eps,1)$) all lie in the interior of $\sigma$. If $\gamma(0)$ (resp.\ $\gamma(1)$) lies in the interior of an edge $e$, then $\gamma$ \emph{starts} (resp.\ \emph{ends}) \emph{perpendicularly to $e$} if the angle at $\gamma(0)$ (resp.\ $\gamma(1)$) between $\gamma$ and $e$ is $\frac{\pi}{2}$.

We will also need the following, which generalises an argument in the proof of \cite[Thm~A]{MPa}. 

\begin{lemma}
\label{lem:flattorus}
Let $A$ be a group isomorphic to $\Z^2$ acting freely on a $\mathrm{CAT}(0)$ triangle complex $W$ with finitely many isometry types of simplices. Then there is an isometrically embedded $A$-cocompact subcomplex in $W$ isometric to the Euclidean plane.
\end{lemma}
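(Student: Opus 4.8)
The plan is to use the flat torus theorem together with the structure of CAT(0) triangle complexes with finitely many isometry types of simplices. Since $A\cong\Z^2$ acts freely by semisimple isometries on the CAT(0) space $W$ (semisimplicity is Remark~\ref{rem:semisimple}), the flat torus theorem \cite[II.7.1]{BH} provides an isometrically embedded $A$-invariant copy $E\subset W$ of the Euclidean plane $\R^2$ on which $A$ acts cocompactly by translations. The goal is to replace $E$ by a \emph{subcomplex} with the same properties. The key point is that $E$, being a flat plane, must in fact be covered by triangles of $W$: a geodesic in $W$ crosses only finitely many simplices per unit length (the uniform version of the Claim, as in Remark~\ref{rem:semisimple}), and a flat plane cannot pass through the interior of an edge of positive geodesic curvature on one side nor bend at a vertex without creating an angle $<\pi$ in some link, which would contradict flatness via the girth condition of Theorem~\ref{thm:localCAT(0)}(iii). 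More precisely, one argues that every point of $E$ lies in a simplex whose intersection with $E$ is a neighbourhood of that point within a single closed triangle (or that the flat is entirely swept out by finitely many triangle-types up to the $A$-action).

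Concretely, first I would fix a compact fundamental domain $K\subset E$ for the $A$-action. By the uniform Claim, $K$ meets only finitely many simplices of $W$, hence by $A$-invariance of $E$ the whole plane $E$ meets only finitely many isometry types of simplices, and the collection of simplices meeting $E$ is $A$-invariant. Let $Z$ be the subcomplex of $W$ consisting of all closed simplices that meet $E$. Then $Z$ is $A$-invariant and $A$-cocompact. The remaining task is to show $E\subseteq Z$ is realised as a union of faces, i.e.\ that each triangle $T$ meeting $E$ actually has $T\cap E$ equal to a subpolygon of $T$, and moreover that these subpolygons tile $E$; combined with flatness this forces $T\cap E$ to be all of $T$ for the two-dimensional pieces, so $E$ is literally a subcomplex. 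For this, I would use that the induced path metric on $Z$ need not a priori equal the ambient metric, so one checks that $E$ with the subspace metric from $W$ is already geodesic and flat, and that a geodesic in $W$ lying in $E$ can only change the simplex it is in at points of $\partial$-simplices; since $E$ is a plane, transversality and the local structure from Theorem~\ref{thm:localCAT(0)} pin down the cell structure.

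The step I expect to be the main obstacle is ruling out the possibility that the flat plane $E$ slices through triangles of $W$ along curves that are not edges, i.e.\ that $E\cap T$ is a proper subpolygon for some triangle $T$. In the piecewise Euclidean case this cannot happen because the flat must be geodesically ruled by segments in each cell and a maximal flat is a subcomplex, but here the metrics $\sigma_T$ are merely smooth of nonpositive curvature, so a flat strip could in principle meander. The resolution should be: along $E$, condition~(i) forces the ambient Gaussian curvature to vanish on the interior of $E\cap T$, and condition~(ii) forces the geodesic curvature to vanish where $E$ crosses an edge transversally; an $A$-periodic flat plane with these constraints, together with the girth condition at vertices of $W$ lying on $E$, can only close up if its cell decomposition is a genuine $\R^2$-tiling by the triangles it meets. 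I would extract this from a local analysis at each point of $E$: at an interior point of a triangle, $E$ is locally totally geodesic in a flat disc; at an edge point, the two sides must both be flat and the geodesic curvature zero, so $E$ crosses the edge along a geodesic of $W$; at a vertex, $\mathrm{lk}_v E$ is a local geodesic circle of length $\geq 2\pi$ in $\mathrm{lk}^W_v$ which by flatness has length exactly $2\pi$, and such a circle in a graph of girth $\geq 2\pi$ is a concatenation of edges — hence $E$ near $v$ is a union of triangle-sectors. Patching these local pictures gives that $E$ is a subcomplex, completing the proof.
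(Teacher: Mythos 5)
Your overall route is the same as the paper's: semisimplicity via Remark~\ref{rem:semisimple} plus the Flat Torus Theorem, and then checking that the resulting flat is a subcomplex. There is, however, one genuine gap in your first step: the Flat Torus Theorem \cite[II.7.1]{BH} requires the action of $A$ to be \emph{proper} (and by semisimple isometries); ``free and semisimple'' is not enough. Indeed, $\Z^2$ acting on a line by two translations with irrational length ratio is free and semisimple, yet there is no invariant flat plane, so some discreteness input is indispensable. In the present setting this is exactly the role of the hypotheses that the action is \emph{simplicial} and that $W$ has finitely many isometry types of simplices: a free simplicial action is almost free, hence proper in the sense of \cite[I.8.2]{BH} (as noted in the introduction), or alternatively one can invoke the discreteness of translation lengths from Remark~\ref{rem:translation}. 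This is precisely where the paper's own proof does its work: it writes $\mathrm{Min}(A)=Y\times\R^n$ with $A$ acting trivially on $Y$ and by translations on $\R^n$ (citing \cite[II.7.20]{BH}), and uses freeness \emph{together with simpliciality} to force $n=2$, and $2$-dimensionality of $W$ to force $Y$ to be a point. Your appeal to II.7.1 becomes correct once you add the sentence verifying properness.

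On the subcomplex issue, which occupies most of your proposal and which the paper treats as immediate: your local analysis is essentially sound but heavier than needed, and the auxiliary complex $Z$ of all simplices meeting $E$ plays no role (it is strictly larger than $E$). A shorter argument: $E$ is closed, convex and isometric to $\R^2$; for any triangle $T$, the set $E\cap \mathrm{int}\,T$ is closed in $\mathrm{int}\,T$, and it is also open there, because for $x\in E\cap\mathrm{int}\,T$ and small $r$ the set $E\cap B(x,r)$ is a round Euclidean disc contained in $\mathrm{int}\,T$, which by invariance of domain is a neighbourhood of $x$ in the $2$-manifold $\mathrm{int}\,T$. Hence $E\cap\mathrm{int}\,T$ is empty or all of $\mathrm{int}\,T$, and since no point of $E$ has a neighbourhood contained in the $1$-skeleton, $E$ is the union of the closed triangles it contains, i.e.\ a subcomplex; $A$-cocompactness of $E$ is part of the Flat Torus Theorem. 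This bypasses the delicate edge/vertex link analysis (where, e.g., tangencies of $E$ to edges and the length-preservation of the map $\lk_v^E\to\lk_v^W$ would otherwise have to be ruled out or justified).
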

\begin{proof}
Since $W$ has finitely many isometry types of simplices, by Remark~\ref{rem:semisimple} all elements of~$A$ act as loxodromic isometries on $W$. By \cite[II.7.20(1)]{BH}, we have
$\mathrm{Min}(A) = Y \times \R^n$ 
with~$A$ preserving the product structure and acting trivially on $Y$. By \cite[II.7.20(2)]{BH}, we have $n\leq 2$, but since~$A$ acts freely by simplicial isometries, we have $n=2$, and so $Y$ is a point, as desired. 
\end{proof} 

\begin{proof}[Proof of Lemma~\ref{lem:noteasy}]
Let $e$ be an edge of $Z$ of degree $\geq 3$ and let $x$ be a point in the interior of $e$. Let $b_1,b_2,b_3,$ be geodesics
starting at~$x$ perpendicularly to~$e$ contained in distinct triangles $T_1,T_2,T_3$. For each $i=1,2,3,$ the set of starting directions at points in $\partial T_i$ of geodesics intersecting $b_i$ at angle $<\frac{\pi}{6}$ has positive Liouville measure (see \cite[\S3]{BB}). 
Let $S$ denote the union of all the open edges in the links $\lk_y^Z$ for all $y\in  Z^1\setminus Z^0$, with the (infinite) Liouville measure. 
We say that $(\xi_j)_j\in S^\Z$ with $\xi_j\in \lk_{y_j}^Z$ \emph{determines} a locally geodesic oriented line~$\gamma$ in $Z\setminus Z^0$ transverse to $Z^1$
if $\gamma$ intersects~$Z^1$ exactly at points $y_j$ in directions $\xi_j$, in that order.
Since $G$ acts on~$Z$ properly and cocompactly, the set of 
$(\xi_j)_j\in S^\Z$ that determine locally geodesic oriented lines projects to a full measure subset in each coordinate~$S$ (see \cite[\S3]{BB}, which relies on \cite[Chap~6]{CFH}). 
Consequently, for $i=1,2,3,$ there exists a locally geodesic ray $\gamma_i$ in $Z$ starting at an interior point $x_i$ of $b_i$ at angle $<\frac{\pi}{6}$ from~$b_i$, disjoint from~$Z^0$ and transverse to $Z^1$. Let $a_i=xx_i\subset b_i$. See Figure~\ref{f:0}. 

\begin{figure}[h!]
	\centering
	\includegraphics[width=0.66\textwidth]{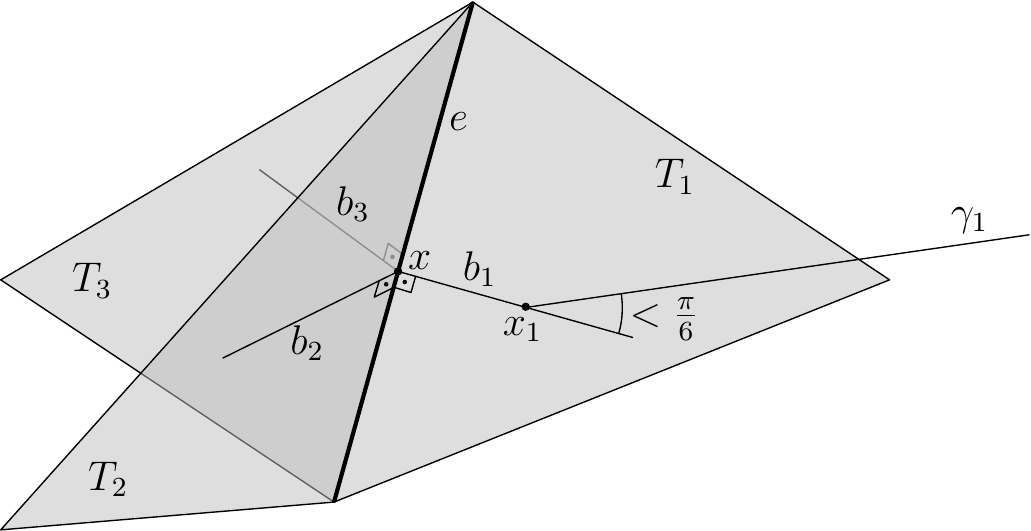}
	\caption{The geodesic ray $\gamma_1$}
	\label{f:0}
\end{figure}

Since $X$ is $\mathrm{CAT}(0)$, by Theorem~\ref{thm:localisometry} we have that $\gamma_i$ are geodesic rays in~$X$ and $a^{-1}_i\cdot a_j$ are geodesics in $X$. Since each $\gamma^{-1}_i\cdot a^{-1}_i\cdot a_j\cdot \gamma_j$ is a piecewise geodesic with angles $>\frac{5\pi}{6}$ at the two breakpoints, by \cite[II.9.3]{BH} the rays $\gamma_i,\gamma_j$ are not asymptotic and they determine points at distance $>\frac{2\pi}{3}$ in the Tits boundary of $X$.
In particular, $Z$ cannot be quasi-isometric to $\R$, since it contains three pairwise non-asymptotic geodesic rays. This proves~(i). 

For (ii), assume for contradiction that $G$ is virtually $\Z^2$ generated by elements $g,h$. Let $\alpha, \beta$ be edge-paths in $Z$ connecting a basepoint $y\in Z^0$ to $gy,hy$, respectively. Then the concatenation $\alpha \cdot g\beta \cdot h\alpha^{-1}\cdot \beta^{-1}$ is a closed edge-path, and since $X$ is simply connected, there is a reduced disc diagram $D\to X$ with that boundary path. Let $Z'\subset X$ be the connected thick $G$-\sics\ obtained from $Z$ by adding the translates under $G$ of the image of~$D$. The complex $Z'$ is locally $\mathrm{CAT}(0)$ by Corollary~\ref{cor:subcomplex}. Let $\widetilde Z'\to Z'$ be the universal cover of $Z'$, which is $\mathrm{CAT}(0)$ by Theorem~\ref{thm:globalCAT(0)}. The action of $G$ on~$Z'$ lifts to an almost free action of a group $\widetilde G$ on~$\widetilde Z'$ fitting into the short exact sequence $\pi_1 Z'\to \widetilde G\to G$. Since $D\to Z'$ lifts to $D\to \widetilde Z'$, we have commuting $\tilde g, \tilde h\in \widetilde G$ mapping to $g,h\in G$, and hence generating a subgroup $A< \widetilde G$ isomorphic to $\Z^2$.

Since $A$ acts almost freely and is torsion-free, we have that it acts freely on~$\widetilde Z'$. By Lemma~\ref{lem:flattorus} applied with $W=\widetilde Z'$, there is an isometrically embedded 
$A$-cocompact subcomplex $E\subset \widetilde Z'$ isometric to the Euclidean plane. We now justify that the composition $\phi\colon E\subset \widetilde Z'\to Z'\subset X$ is an isometric embedding. 

Indeed, for two triangles $T,T'$ of $E$ containing a common edge~$e'$,
the sum of the geodesic curvatures in $\phi(T)$ and $\phi(T')$ at any point of $\phi(e')$ equals~$0$, and so the geodesic curvature at $\phi(e')$ in any triangle of $X$ distinct from $\phi(T),\phi(T')$ is nonpositive. Consequently, $\phi$ is a local isometric embedding at~$e'$. 
Furthermore, since $E$ is isometric to the Euclidean plane, for any geodesic~$\gamma$ in~$E$ passing through a vertex $v$, the angle (see \S\ref{sec:CAT(0)}) between the incoming and outgoing directions of $\gamma$ at $v$ equals $\pi$. Since the map that $\phi$ induces between $\lk_v^E$ and $\lk_{\phi(v)}^X$ is locally injective, the angle between the incoming and outgoing directions of $\phi(\gamma)$ at $\phi(v)$ equals~$\pi$ as well. By Theorem~\ref{thm:localisometry}, we have that $\phi(\gamma)$ is a geodesic. Thus $\phi$ is an isometric embedding, as desired.

Since $A$ is of finite index in $G$, we have that $A$ acts cocompactly on $Z'$. Consequently, the geodesic rays $\gamma_i$ from the proof of part (i) are at bounded distance from $\phi(E)$ in $Z'$. Since $X$ is $\mathrm{CAT}(0)$, we obtain that each $\gamma_i$ is asymptotic to a geodesic ray in $\phi(E)$ and these three rays are pairwise at angle $>\frac{2\pi}{3}$ in $\phi(E)$, which is a contradiction. 
\end{proof}

\section{Folding}
\label{sec:folding}
This section is devoted to a technical reduction of Proposition~\ref{prop:decompose} to the case where the vertex links of $X$ are not `unfoldable'.

By a \emph{graph} we mean a (possibly infinite) metric graph with finitely many possible edge lengths.
A closed edge-path embedded in a graph~$\Lambda$ is a \emph{cycle of~$\Lambda$}.
An edge-path $I$ in $\Lambda$ that is embedded, except possibly at the endpoints, is a \emph{segment of~$\Lambda$} if the endpoints of $I$ have degree $\geq 3$
in $\Lambda$, but every internal vertex of $I$ has degree $2$.

 \begin{defin} 
Let $S$ be a set with an equivalence relation $\sim$ each of whose equivalence classes has size $\geq 2$.
A~graph is a \emph{$\sim$-clover} if it is obtained from the disjoint union of intervals $S\times [0,\pi]$ by identifying all the points in $S\times 0$ to one point called the \emph{basepoint} and identifying each $s\times \pi$ with $s'\times \pi$ for $s\sim s'$. See Figure~\ref{f:1}.
A~graph is a \emph{clover} if it is a $\sim$-clover for some $S,\sim$.  
A~graph $\Gamma$ is \emph{unfoldable} at a vertex $y$ if $\Gamma$ is a wedge $\Gamma_1\vee\Gamma_2$ at $y$ of a cycle~$\Gamma_1$ of length~$2\pi$ and a 
clover $\Gamma_2$ with basepoint $y$. (In particular, $\Gamma$ is also a clover.)

\begin{figure}[h!]
	\centering
	\includegraphics[width=0.36\textwidth]{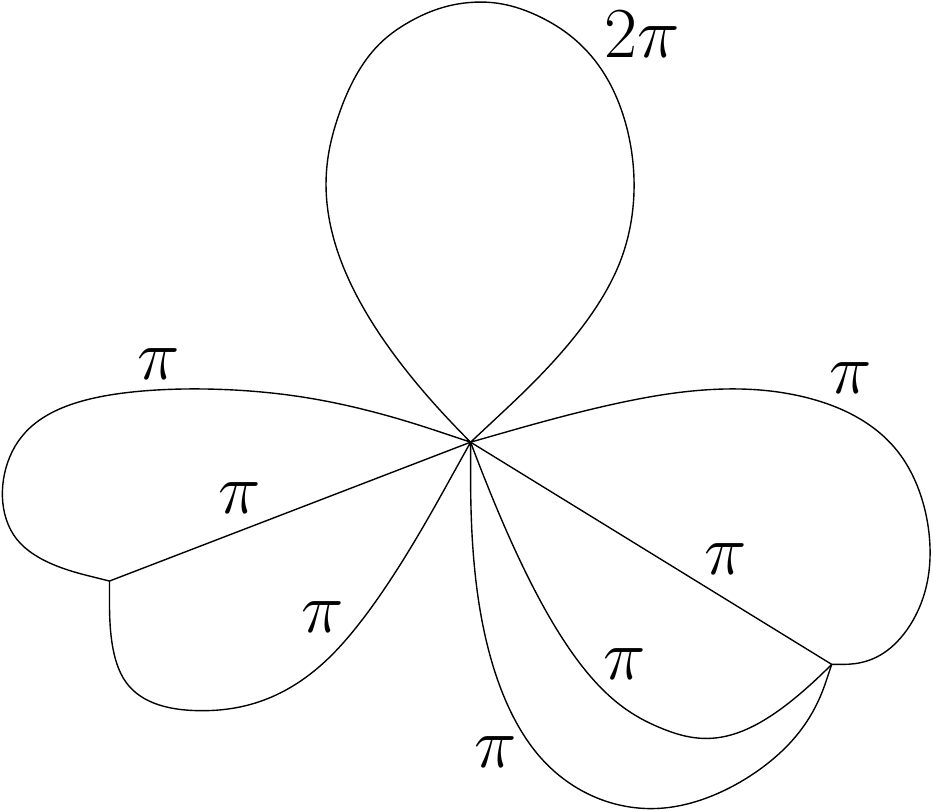}
	\caption{A clover}
	\label{f:1}
\end{figure}
 
Suppose that we have a triangle complex $X'$ and a vertex $w'$ contained in distinct edges $e_1=w'v_1,e_2=w'v_2$ of the same length. Suppose that $\mathrm{lk}^{X'}_{v_1}$ is a circle of length $2\pi$ and $\mathrm{lk}^{X'}_{v_2}$ is a 
clover with basepoint corresponding to $e_2$
Then the quotient map $p'\colon X'\to X$ with $X$ obtained from $X'$ by identifying $v_1$ with $v_2$ and $e_1$ with~$e_2$ 
is called a \emph{folding}. (Note that $X$ might not be a simplicial complex, but in this article we will be using only the inverse operation to folding which does result in a simplicial complex.) 

Conversely, suppose that $X$ is a triangle complex with a vertex $v$ whose link $\Gamma_1\vee\Gamma_2$ is unfoldable at a point $y$ corresponding to an edge $vw$. 
Then, up to an isometry, there exists a unique triangle complex $X'$ and a folding $p'\colon X'\to X$ identifying edges $w'v_1,w'v_2$ to $wv$ and such that the links of~$v_i$ in~$X'$ map isometrically to the graphs $\Gamma_i$ in the link of~$v$. See Figure~\ref{f:4}. We call $p'$ the \emph{folding over $\Gamma_1$} (since it is uniquely determined by $\Gamma_1$). 

\begin{figure}[h!]
	\centering
	\includegraphics[width=0.68\textwidth]{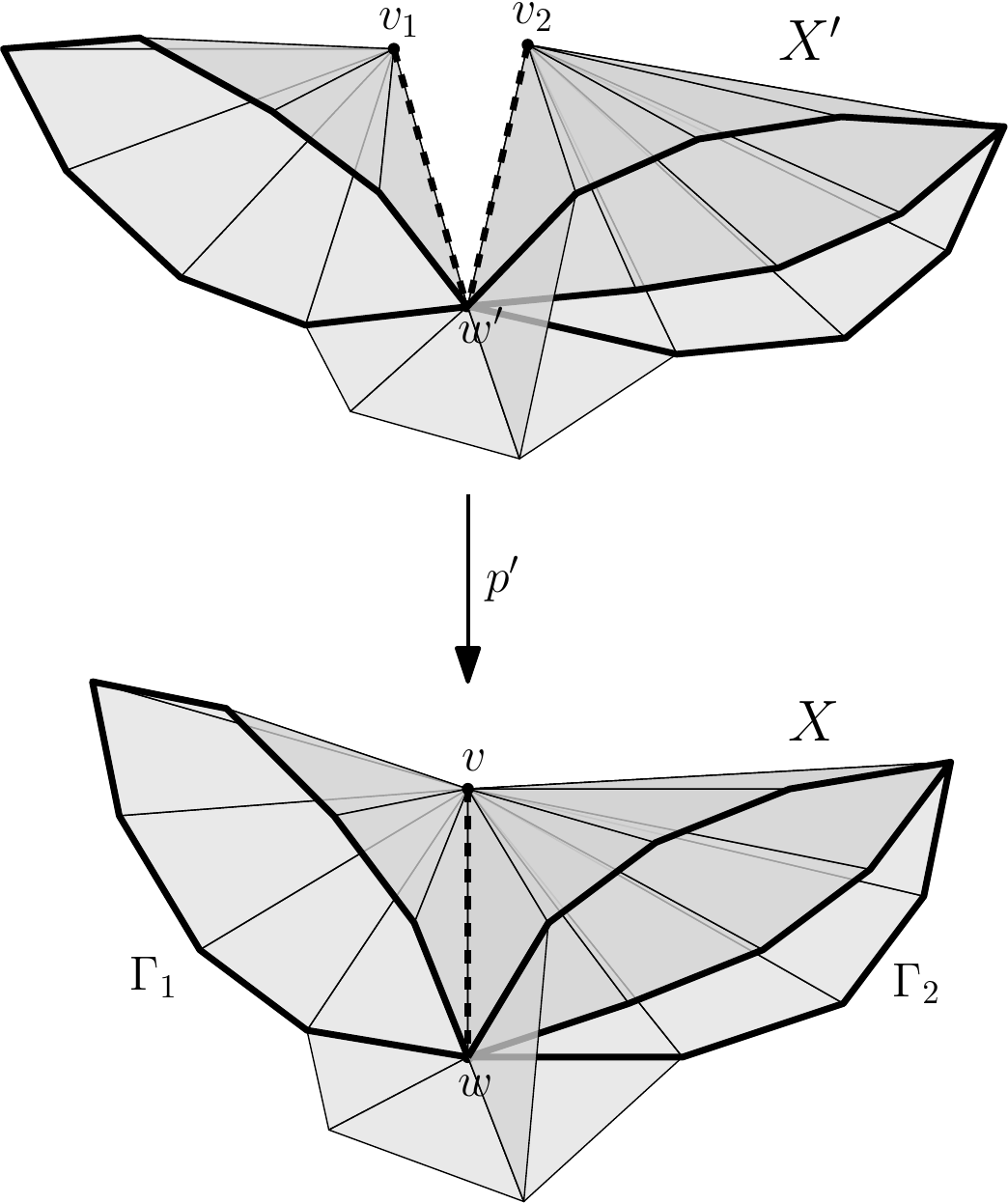}
	\caption{A folding $p'\colon X'\to X$ over $\Gamma_1$. Subgraphs corresponding to the links $\Gamma_1$ and $\Gamma_2$ and their preimages are thickened, the edge $vw$ and its preimage are dashed.}
	\label{f:4}
\end{figure}

Suppose now that $p'\colon X'\to X,\hat p'\colon \hat X'\to X$ are foldings over $\Gamma_1\neq\hat \Gamma_1$. Suppose that $v\neq \hat w$ and $\hat v\neq w$.  
We have that $\hat\Gamma_1$ lifts to a link $\lk_{v'}^{X'}$, which is again unfoldable,
except when $\lk_{v'}^{X'}=\hat \Gamma_1$. In that exceptional case, we have $\Gamma_2=\hat \Gamma_1$ and $p'=\hat p'$, and we set $p''=\mathrm{id}$.
Otherwise, let $p''\colon X''\to X'$ be the folding over~$\hat \Gamma_1$. We call $p''\circ p'$ the \emph{folding over $\Gamma_1,\hat\Gamma_1$}. Note that the folding over $\Gamma_1,\hat \Gamma_1$ coincides with the folding over $\hat \Gamma_1,\Gamma_1$. 

Analogously, given a 
finite family of foldings $p'^\lambda\colon X'^\lambda\to X$ over $\Gamma^\lambda_1$ (where $\lambda$ is an index) with all $v^\lambda$ distinct from all $w^{\lambda}$, the \emph{folding over $\{\Gamma^\lambda_1\}$}
is the composition of foldings over the lifts of $\Gamma^\lambda_1$, which does not depend on the order. For a countable family of such foldings $p'^\lambda\colon X'^\lambda\to X$, the \emph{folding over $\{\Gamma^\lambda_1\}$} is
the inverse limit of the foldings over the finite subsets of $\{\Gamma^\lambda_1\}$.
\end{defin}

\begin{lemma} 
\label{rem:folding}
Let $p_{\mathcal F}\colon X_{\mathcal F}\to X$ be the folding over a (finite or countable) family ${\mathcal F}=\{\Gamma^\lambda_1\}$. 
\begin{enumerate}[(i)]
\item
The map $p_{\mathcal F}$ is a homotopy equivalence.
\item
If $X$ is locally $\mathrm{CAT}(0)$, then $X_{\mathcal F}$ is locally $\mathrm{CAT}(0)$. 
\item 
If $X$ is essential, then $X_{\mathcal F}$ is essential. 
\end{enumerate}
\end{lemma}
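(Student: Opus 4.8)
The plan is to reduce everything to the single-folding case, i.e.\ to analyse one folding $p'\colon X'\to X$ over $\Gamma_1$, and then extend to finite families by induction and to countable families by a limit argument. So first I would record the behaviour of a single folding $p'\colon X'\to X$ identifying $e_1=w'v_1, e_2=w'v_2$ to $vw$. For (i): the folding $p'$ is, up to the identification along the two edges, a quotient by collapsing nothing of positive dimension — rather it is the inverse of the elementary expansion that replaces a vertex by two vertices joined along an edge. Concretely, $X$ is obtained from $X'$ by identifying the edge $e_1$ with $e_2$; since both are edges sharing the vertex $w'$, the subcomplex $e_1\cup e_2$ of $X'$ is a tree (two edges meeting at a point), it is contractible, and collapsing a contractible subcomplex is a homotopy equivalence. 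Hence $p'$ is a homotopy equivalence. (One has to be slightly careful that the identification of $v_1$ with $v_2$ is induced by collapsing $e_1\cup e_2$ and not something more drastic; but the definition of folding forces exactly this, since the links are prescribed to map isometrically.)

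For (ii): assuming $X$ is locally $\mathrm{CAT}(0)$, I would verify conditions (i)--(iii) of Theorem~\ref{thm:localCAT(0)} for $X'$. Condition (i) is local on the interior of triangles and the triangles of $X'$ are isometric to triangles of $X$, so it is immediate; condition (ii) is local on the interior of edges, and every edge of $X'$ maps to an edge of $X$ with the same set of incident triangle-germs (here the edge $vw$ acquires in $X$ the triangles incident to both $e_1$ and $e_2$, while in $X'$ those are split between $e_1$ and $e_2$; in either complex the relevant sum over two distinct triangles at an interior point is a sum over a pair of triangles also occurring in $X$), so it holds. Condition (iii) is the only point of substance: for a vertex $x$ of $X'$ other than $v_1,v_2$, the link $\lk_x^{X'}$ is isometric to $\lk_{p'(x)}^X$, so its girth is $\geq 2\pi$; and for $x=v_1$ (resp.\ $v_2$), the link is by definition isometric to $\Gamma_1$ (resp.\ $\Gamma_2$), where $\lk_v^X=\Gamma_1\vee\Gamma_2$ is unfoldable. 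Since $\Gamma_1$ is a cycle of length $2\pi$ its girth is $2\pi$, and $\Gamma_2$ is a clover, which is a tree union of loops each of length $2\pi$ through the basepoint, so its girth is also $2\pi$ (any embedded closed path in a clover is one of the defining loops $s\times[0,\pi]\cup s'\times[0,\pi]$, of length $2\pi$). So $X'$ satisfies (i)--(iii) and is locally $\mathrm{CAT}(0)$.

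For (iii): the edges of $X'$ other than $e_1,e_2$ are in degree-preserving bijection with edges of $X\setminus\{vw\}$, and $e_1,e_2$ together carry the triangles of $vw$ distributed according to $\Gamma_1,\Gamma_2$; since $\Gamma_1$ has $\geq 2$ edges (it is a cycle of length $2\pi$, nondegenerate) and every equivalence class of the clover $\Gamma_2$ has size $\geq 2$, both $e_1$ and $e_2$ have degree $\geq 2$ in $X'$, and no component of $X'$ is a single vertex (the expansion does not create isolated vertices). Hence $X'$ is essential. Having done the single-folding case, (i)--(iii) for a finite family ${\mathcal F}$ follow by induction on $|{\mathcal F}|$ along the composition defining the folding over $\{\Gamma^\lambda_1\}$: each stage is a folding over a lift of some $\Gamma_1^\lambda$ (the lifts are still cycles of length $2\pi$ inside unfoldable links, or the folding is the identity), so homotopy equivalence, local $\mathrm{CAT}(0)$, and essentiality are each preserved at every stage and hence by the composition. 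For a countable family, $X_{\mathcal F}$ is by definition the inverse limit of the $X_{\mathcal F'}$ over finite ${\mathcal F'}\subset{\mathcal F}$; since the maps in the tower are homotopy equivalences that are moreover `expansions' in the sense above (they are isomorphisms on every finite subcomplex for ${\mathcal F'}$ large), $X_{\mathcal F}$ is a locally finite-type increasing union of the $X_{\mathcal F'}$, and being locally $\mathrm{CAT}(0)$ and essential are local conditions preserved under such limits, while $p_{\mathcal F}$ being a homotopy equivalence follows since it restricts to a homotopy equivalence on each finite subcomplex, or alternatively since each $p_{\mathcal F'}$ is and homotopy groups commute with the relevant (co)limits.

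The main obstacle I anticipate is part (i) in the countable case: one must argue carefully that the inverse limit of an infinite tower of homotopy equivalences (which here are collapses of contractible subcomplexes) is still a homotopy equivalence onto $X$. The clean way is to observe that every folding in the tower is injective away from a single edge-pair, so for each finite subcomplex $K\subset X$ the map $p_{\mathcal F}$ is a homeomorphism over $K$ once all the finitely many relevant foldings in ${\mathcal F}$ have been performed; thus $p_{\mathcal F}$ is a `cellular' map that is an isomorphism on the level of each finite subcomplex after truncating ${\mathcal F}$, and one deduces that it induces isomorphisms on all homotopy groups (these being detected on compact sets) and is a homotopy equivalence by Whitehead's theorem. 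The secondary technical nuisance, already present in the finite case, is checking that the lifts $\lk_{v'}^{X'}$ of $\hat\Gamma_1$ invoked in the definition remain genuine cycles of length $2\pi$ sitting inside unfoldable links, so that each composition factor really is a folding of the type analysed above; but this is exactly what the definition's case analysis ($p''=\mathrm{id}$ versus $p''$ a folding over $\hat\Gamma_1$) has already arranged, so no new work is needed.
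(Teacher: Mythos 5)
Your single-folding analysis, and the induction over finite families, are essentially workable (your part (i) argument is, once the parenthetical caveat is made precise, the same common-quotient idea the paper uses, applied one fold at a time). But the countable case of (i) is a genuine gap. The complex $X_{\mathcal F}$ is defined as an \emph{inverse} limit of the finite-stage foldings, and neither of your arguments applies as stated: the ``homotopy groups are detected on compact sets'' principle is for direct limits, and homotopy groups do not commute with inverse limits in general; meanwhile the claim that $p_{\mathcal F}$ (or the tower) becomes an isomorphism over every finite subcomplex of $X$ after finitely many foldings is both literally false ($p_{\mathcal F}$ is at least $2$-to-$1$ over every folded edge, so it is never a homeomorphism over a subcomplex containing one) and, in the intended reading about the bonding maps, unjustified: the definition permits infinitely many distinct $\Gamma^\lambda_1$ at a single vertex $v$ (e.g.\ when the wedge point of an unfoldable link has infinite degree), in which case no finite truncation of $\mathcal F$ stabilises the preimage of a neighbourhood of $v$. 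The paper's proof avoids induction and limits altogether: the union $V$ of the edges $v^\lambda w^\lambda$ is a forest in $X$, its preimage $V_{\mathcal F}$ is a forest in $X_{\mathcal F}$, collapsing the components of each yields one and the same quotient $X^*$, both collapse maps are homotopy equivalences, and the identity $q_{\mathcal F}=q\circ p_{\mathcal F}$ gives (i) by two-out-of-three, uniformly for finite and countable families.

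There is also a smaller error in your verification of condition (iii) of Theorem~\ref{thm:localCAT(0)}: it is not true that every vertex of $X'$ other than $v_1,v_2$ has link isometric to the link of its image. At $w'$ the link is obtained from $\lk_w^X$ by splitting the vertex corresponding to $vw$ into two vertices (one for $e_1$, one for $e_2$, with the incident edges distributed according to $\Gamma_1$ and $\Gamma_2$), so the girth bound at $w'$ still needs an argument. The conclusion does hold, because the induced map $\lk_{w'}^{X'}\to\lk_w^X$ is locally injective, hence sends embedded cycles to locally injective closed paths of the same length, which cannot be shorter than the girth of $\lk_w^X$; this locally-injective-links observation is exactly how the paper proves (ii) for all links at once, and it also repairs your step. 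So (ii) and (iii) are fixable along the paper's lines, but (i) in the countable case needs the common-quotient argument rather than a limit of homotopy equivalences.
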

\begin{proof}
For part (i), note that if for some indices $\lambda,\mu$, we have $v^\lambda=v^\mu$, then $w^\lambda=w^\mu$, since the point $y^\lambda$ in $\lk_{v^\lambda}^X$ does not depend on~$\Gamma^\lambda_1$. Consequently, distinct edges $v^\lambda w^\lambda$ might intersect only along $w^\lambda$. Thus their union $V\subset X$ is a forest, and so the quotient map $q\colon X\to X^*$ collapsing each component of $V$ into a point is a homotopy equivalence. Similarly, the subcomplex $V_{\mathcal F}=p_{\mathcal F}^{-1}(V)\subset X_{\mathcal F}$ is a forest, and so the quotient map $q_{\mathcal F}\colon X_{\mathcal F}\to X^*$ collapsing each component of $V_{\mathcal F}
$ into a point is also a homotopy equivalence. Thus the identity $q_{\mathcal F} =q\circ p_{\mathcal F}$ implies part (i).

Part (ii) follows from the fact that the maps that $p_{\mathcal F}$ induces between the links of $X_{\mathcal F}$ and $X$ are locally injective, and from Theorem~\ref{thm:localCAT(0)}.

The map $p_{\mathcal F}$ is a local isometry at the open edges outside $V_{\mathcal F}$. Thus for part (iii) it suffices to justify that the links of the vertices in each $p_{\mathcal F}^{-1}(v^\lambda)$ have no leaves. But each such link is a cycle $\Gamma^\mu_1$ (for $v^\mu=v^\lambda$) or a clover, as desired.
\end{proof}

\begin{prop}
\label{prop:unfolding}
Let $G$ be a group acting almost freely and without weak inversions
on a $\mathrm{CAT}(0)$ triangle complex $X$ that is an increasing union of connected essential $G$-\sics\
Then $G$ acts almost freely and without weak inversions on a $\mathrm{CAT}(0)$ triangle complex $X'$ that is an increasing union of connected essential $G$-\sics\
and none of whose links are unfoldable. 

Furthermore, if $X$ contains an edge of degree $\geq 3$, then $X'$ contains an edge of degree $\geq 3$ or $G$ contains a nonabelian free group.
\end{prop}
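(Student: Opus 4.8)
The plan is to perform the folding over the family $\mathcal F$ of *all* cycles $\Gamma_1$ of length $2\pi$ that occur as the cycle part of an unfoldable link $\Gamma_1\vee\Gamma_2$ in $X$, grouped $G$-equivariantly. First I would record that since $G$ acts cocompactly on each term $Z_n$ of the increasing union $X=\bigcup Z_n$ and almost freely, there are only finitely many $G$-orbits of unfoldable links in each $Z_n$, hence countably many in $X$; so $\mathcal F$ is a countable family and the folding $p_{\mathcal F}\colon X_{\mathcal F}\to X$ of the preceding definition is defined. One has to check the hypothesis that the $v^\lambda$ are all distinct from the $w^\lambda$: this follows because $w^\lambda$ is an endpoint of the edge $vw$ sitting in the clover side, while $v^\lambda$ is the vertex whose link is unfoldable, and a clover basepoint link cannot be unfoldable (a clover has a single vertex of degree $\geq 3$, its basepoint, which is not of the form $\Gamma_1\vee\Gamma_2$ with $\Gamma_1$ a $2\pi$-cycle unless the clover *is* such a wedge — and in that degenerate case the two foldings agree, which is the exceptional case already handled in the definition). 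Because $\mathcal F$ is $G$-invariant, $G$ acts on $X_{\mathcal F}$; the action is still almost free (the point-stabilisers in $X_{\mathcal F}$ inject into those in $X$ under $p_{\mathcal F}$) and still without weak inversions (a weak inversion in $X_{\mathcal F}$ would push forward to one in $X$, using that $p_{\mathcal F}$ is a local isometry away from the collapsed forest $V_{\mathcal F}$ and injective on the relevant links).

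Next I would transport the structure as an increasing union of connected essential $G$-\sics\ through $p_{\mathcal F}$. Setting $Z_n^{\mathcal F}=p_{\mathcal F}^{-1}(Z_n)$ gives an increasing exhaustion of $X_{\mathcal F}$ by $G$-invariant subcomplexes; each $Z_n^{\mathcal F}/G$ is compact since $p_{\mathcal F}$ is a proper map with $X_{\mathcal F}/G\to X/G$ finite-to-one on cells, and connectedness of $Z_n^{\mathcal F}$ follows from connectedness of $Z_n$ together with the fact (Lemma~\ref{rem:folding}(i), applied to the restriction) that the preimage of a connected complex under such a folding is connected — the fibres are collapsed forests. That $X_{\mathcal F}$ is $\mathrm{CAT}(0)$ follows from Lemma~\ref{rem:folding}(ii) (it is locally $\mathrm{CAT}(0)$) together with Lemma~\ref{rem:folding}(i) ($p_{\mathcal F}$ is a homotopy equivalence, so $X_{\mathcal F}$ is simply connected) and Theorem~\ref{thm:globalCAT(0)}; essential-ness of each $Z_n^{\mathcal F}$ is Lemma~\ref{rem:folding}(iii) applied to $Z_n$. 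The crucial point is that $X_{\mathcal F}$ has *no* unfoldable link: every new vertex in a fibre $p_{\mathcal F}^{-1}(v^\lambda)$ has link a $2\pi$-cycle or a clover (as in the proof of Lemma~\ref{rem:folding}(iii)), and such links are not unfoldable unless they are themselves a $\Gamma_1\vee\Gamma_2$ wedge — but those were already split off by the folding; and the links at untouched vertices are unchanged, so any surviving unfoldable link would have been in $\mathcal F$ and hence folded away. (A short induction/limit argument is needed here since $\mathcal F$ is infinite: an unfoldable link in the inverse limit $X_{\mathcal F}$ would already be visible, and unfoldable, in some finite stage, contradicting that all finite-stage unfoldable links lie in $\mathcal F$.) I would set $X'=X_{\mathcal F}$.

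It remains to handle the "furthermore" clause. Suppose $X$ has an edge of degree $\geq 3$; I must show $X'$ does, or else produce a nonabelian free subgroup of $G$. The map $p_{\mathcal F}$ is a local isometry, and in particular a simplicial isomorphism, at every open edge outside the collapsed forest $V_{\mathcal F}$; and an edge $e$ of $X$ of degree $\geq 3$ either lifts to an edge of $X'$ of degree $\geq 3$, or else $e\subset V$, i.e. $e$ is one of the edges $v^\lambda w^\lambda$ along which a folding was performed. So the only obstruction is that *every* degree-$\geq 3$ edge of $X$ is a folding edge $v^\lambda w^\lambda$. In that case $v^\lambda$ has unfoldable link $\Gamma_1\vee\Gamma_2$ with $\Gamma_1$ a $2\pi$-cycle; the cycle $\Gamma_1$ meets the basepoint $y^\lambda$ (the direction of $e=v^\lambda w^\lambda$) and, being a full $2\pi$-cycle disjoint from the clover except at $y^\lambda$, it forces the triangles of $X$ around $v^\lambda$ on the $\Gamma_1$-side to form, together with $e$, a configuration in which $e$ has degree exactly the number of edges of $\Gamma_1$ through $y^\lambda$, i.e. exactly $2$ on that side plus whatever comes from $\Gamma_2$. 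I expect the main obstacle to be exactly this bookkeeping: showing that if *all* thick edges are folding edges then, after folding, the complex $X'$ still contains a thick edge unless the geometry degenerates, in which case the hypothesis of Proposition~\ref{prop:decompose} fails and one must instead extract the free group. The cleanest route is: run the argument of Lemma~\ref{lem:noteasy} directly on the thick edge $e$ of $Z$ *before* folding — it already produces three pairwise-non-asymptotic geodesic rays and hence shows $G$ is not virtually cyclic or $\Z^2$, and combined with the Weak Tits Alternative of Ballmann--Brin \cite{BB} (applicable once $X$ is $\mathrm{CAT}(0)$ and $2$-dimensional) this yields a nonabelian free subgroup of $G$, discharging the alternative. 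Thus in every case either $X'$ is thick or $G$ contains a nonabelian free group, as required. \qed
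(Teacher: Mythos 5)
There is a genuine gap at the heart of your argument: the claim that after folding over the family $\mathcal F$ of all cycles visible in links of $X$, ``any surviving unfoldable link would have been in $\mathcal F$ and hence folded away.'' A folding at $v$ over $\Gamma_1$ does not only split $\lk_v^X$; it also changes the link of the \emph{other} endpoint $w$ of the folded edge, by splitting the vertex of $\lk_w^X$ corresponding to the direction of $vw$ into two vertices and redistributing the incident edges. This can create unfoldable links that were not present in $X$ at all. For instance, if $\lk_w^X$ consists of four arcs of length $\pi$ joining the direction $y_w$ of $wv$ to a second point $z$ (a $2$-connected graph, hence not a wedge, hence not unfoldable), and the splitting separates the arcs two-and-two, then the new link of $w$ is a wedge at $z$ of two circles of length $2\pi$ --- unfoldable at a direction pointing to a third vertex. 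Such newly created unfoldable links are invisible in $X$, so they are not in $\mathcal F$, and the process must be iterated; there is no a priori reason for it to terminate, even transfinitely in one sweep. This is exactly the difficulty the paper's proof is built around: it introduces a poset of ``multifoldings'' $(X',(Z'_k),p')$, shows that every chain has an upper bound (the key finiteness input being that, since $p'$ is bijective on triangles of $Z'_k$ and $G$ acts properly cocompactly on $Z_k$, each $(Z'_k,p'_k)$ takes only finitely many values up to equivalence, so a chain stabilises levelwise and one can take a direct limit, whose simple connectedness is checked separately), and then applies the Kuratowski--Zorn lemma; maximality is what forces the absence of unfoldable links. Your parenthetical ``short induction/limit argument'' addresses only the fibre vertices over the $v^\lambda$, not the $w$-vertices, and so does not close this gap. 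A related defect: the paper's folding over a family is only defined when all $v^\lambda$ are distinct from all $w^\mu$; for a single $G$-orbit this follows from the absence of weak inversions, but for your all-encompassing family it can fail ($\lk_w^X$ may itself be unfoldable), and your justification argues about the wrong thing (whether a clover link is unfoldable --- which, incidentally, it can be, at its basepoint).

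Two further points. First, $p_{\mathcal F}^{-1}(Z_n)$ need not be the right subcomplex: if all triangles of $Z_n$ at $v$ lie on the $\Gamma_2$ side, the preimage contains the isolated vertex $v_1$, so it is neither essential nor connected; the paper instead takes the closure of the union of open triangles mapping into $Z_n$ and uses essentiality of $Z_n$ to identify its link at $v$. Second, in the ``furthermore'' clause your appeal to the Ballmann--Brin weak Tits alternative is not available: that result requires a proper \emph{cocompact} action on the whole $2$-dimensional $\mathrm{CAT}(0)$ complex, whereas here $G$ is cocompact only on the subcomplexes $Z_k$ (if it applied, much of this paper would be unnecessary). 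The correct step, as in the paper, is to note that $G$ is finitely generated (it acts properly cocompactly on the connected $Z_1$), apply Lemma~\ref{lem:noteasy} to $X$ to rule out virtually cyclic and virtually $\Z^2$, and then, if $X'$ has no edge of degree $\geq 3$, invoke Proposition~\ref{cor:OP} applied to $X'$ to conclude that $G$ contains a nonabelian free group.
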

 \begin{proof}
We fix an increasing sequence $Z_k\subset X$ of connected essential $G$-\sics\  exhausting~$X$.
A \emph{multifolding} $(X',(Z'_k),p')$ is a:
\begin{enumerate}[(i)]
\item
$\mathrm{CAT}(0)$ triangle complex $X'$ with an action of $G$, 
\item
a sequence $(Z'_k)$ of essential 
$G$-\sics\ exhausting $X'$, and 
\item
a $G$-equivariant simplicial map $p'\colon X'\to X$ that 
\begin{itemize}
\item
maps bijectively the set of triangles of
each $Z'_k$ to the set of triangles of $Z_k$, and 
\item
whose restriction  $Z'_k\to Z_k$ is a homotopy equivalence. 
\end{itemize}
\end{enumerate}
We introduce a partial order $\leq$ on the set of multifoldings, writing $(X',(Z'_k),p')\leq(X'',(Z''_k),p'')$ (or, shortly, $X'\leq X''$) if there is a $G$-equivariant simplicial map $r\colon X''\to X'$ satisfying $p''=p'\circ r$. Multifoldings $X',X''$ are \emph{equivalent} if $X'\leq X''$ and $X''\leq X'$. Let $\mathcal X$ be the set of equivalence classes of multifoldings.

We claim that every chain of elements $X'^{\lambda}$ in $\mathcal X$ (where $\lambda$ is an index) has an upper bound. Indeed, denote by $p'_k$ the restriction of $p'$ to $Z'_k$ and write $(Z'_k,p'_k)\leq(Z''_k,p''_k)$ whenever there is a $G$-equivariant simplicial map $r\colon Z''_k\to Z'_k$ satisfying $p''_k=p'_k\circ r$. For each $k$, since $G$ acts properly and cocompactly on $Z'^{\lambda}_k$, by the first bullet
we have that $(Z'^{\lambda}_k, p'^\lambda_k)$ can take on only finitely values up to the appropriate equivalence. Thus there exists a largest element among the $(Z'^{\lambda}_k, p'^\lambda_k)$, which we call $Z'^{\infty}_k$. Furthermore, since 
$(Z'_{k+1},p'_{k+1})\leq(Z''_{k+1},p''_{k+1})$ implies $(Z'_{k},p'_k)\leq(Z''_{k},p''_k)$, 
we have natural injective maps $Z'^{\infty}_k\to Z'^{\infty}_{k+1}$. Let $X'^{\infty}$ be their direct limit, equipped with the limit map~$p'^{\infty}$ to~$X$. Since each $Z'^{\infty}_k$ is locally $\mathrm{CAT}(0)$ (Corollary~\ref{cor:subcomplex}),
we have that $X'^{\infty}$ is locally $\mathrm{CAT}(0)$ (Theorem~\ref{thm:localCAT(0)}). 

To prove that $X'^{\infty}$ is an upper bound for our chain in $\mathcal X$, by Theorem~\ref{thm:globalCAT(0)} it remains to prove that $X'^{\infty}$ is simply connected. Let $\alpha$ be a closed edge-path in the $1$-skeleton of $X'^{\infty}$, and fix $k$ such that $\alpha$ lies in $Z'^{\infty}_k$. Fix~$\lambda$ with $Z'^{\lambda}_k=Z'^{\infty}_k$ and keep the notation $\alpha$ for its copy in $Z'^{\lambda}_k$. Since $X'^{\lambda}$ is simply connected, there is a  disc diagram $D\to X'^{\lambda}$ with boundary path $\alpha$. Fix $l$ such that the image of $D$ is contained in $Z'^{\lambda}_l$. Since, by the second bullet, 
the induced map $\pi_1Z'^{\infty}_l\to \pi_1Z'^{\lambda}_l$ is an isomorphism, we have that $\alpha$ is trivial in $\pi_1Z'^{\infty}_l$ and hence in $\pi_1X'^{\infty}$.
Consequently, by the Kuratowski--Zorn lemma, there is a maximal element $X'\in \mathcal X$.

We now prove that none of the links of $X'$ are unfoldable. Otherwise, suppose that  $X'$ has a vertex $v$ whose link $\Gamma=\Gamma_1\vee\Gamma_2$ is unfoldable at a point corresponding to an edge $vw$ of $X'$. 
Let $\mathcal F=\{g\Gamma_1\}$, for $g\in G$. Since $G$ acts without weak inversions, we have $gv\neq hw$, for all $g,h\in G$. 
Thus we can define the folding over $\mathcal F$, which we denote by $X_{\mathcal F}\to X'$. The action of~$G$ on~$X'$ lifts to an action of $G$ on $X_{\mathcal F}$. For each essential $G$-\sics\ $Z'\subset X'$, let $Z_{\mathcal F}\subset X_{\mathcal F}$ be the closure of the union of all the open triangles of $X_{\mathcal F}$ mapping into $Z'$. Note that if $Z'$ does not contain $v$, or if $Z'$ contains~$v$, but $\mathrm{lk}^{Z'}_v$ is contained in $\Gamma_1$ or $\Gamma_2$ (in the latter case $\mathrm{lk}^{Z'}_v\subset \bigcap_{g\in \mathrm{Stab}(v)}g\Gamma_2$), then $Z_{\mathcal F}\to Z'$ is an isometry. Otherwise $\mathrm{lk}^{Z'}_v$ contains edges lying on both the cycle $\Gamma_1$ and the clover~$\Gamma_2$. Since $Z'$ is essential, the link $\mathrm{lk}^{Z'}_v$ is the wedge of $\Gamma_1$ and a clover, so it is unfoldable. Then $Z_{\mathcal F}\to Z'$ is the folding over the family $\{g\Gamma_1\}$, for $g\in G$.
By Lemma~\ref{rem:folding}(i,iii) we have that $Z_{\mathcal F}$ is essential and $Z_{\mathcal F}\to Z'$ is a homotopy equivalence. By Lemma~\ref{rem:folding}(i,ii) we have that $X_{\mathcal F}$ is simply connected and locally $\mathrm{CAT}(0)$, and hence $\mathrm{CAT}(0)$ by Theorem~\ref{thm:globalCAT(0)}.
Consequently, we have $X_{\mathcal F}\in \mathcal X$ and $X_{\mathcal F}>X'$, contradicting the maximality of $X'$. Thus none of the links of $X'$ are unfoldable.

For the last assertion, note that, by Lemma~\ref{lem:noteasy} applied to $X$, we have that $G$ is neither virtually cyclic, nor virtually $\mathbb{Z}^2$. Moreover, $G$ is finitely generated, since it acts properly and cocompactly on $Z_1$, which is connected.
Consequently, if $X'$ does not have edges of degree $3$, then by Proposition~\ref{cor:OP} we have that $G$ contains a nonabelian free group, as desired.
\end{proof}

\section{Criteria for rank~1 elements}
\label{sec:rank1}

In this section, we give criteria for finding `rank~$1$' elements, and consequently free subgroups in $G$.

\begin{defin}
\label{def:rank}
Let $\gamma$ be a geodesic line in a $\mathrm{CAT}(0)$ triangle complex $X$. We say that $\gamma$ is \emph{curved} if
$\gamma$ passes through a vertex $v$ and its incoming and outgoing directions at $v$ 
are at angle $>\pi$. 
\end{defin}

\begin{lem}
\label{prop:contracting}
Let $g$ be a loxodromic isometry of a $\mathrm{CAT}(0)$ triangle complex~$X$ with a curved axis~$\gamma$. Then there exists $M$ such that the projection to $\gamma$ of each closed metric ball in $X$ disjoint from $\gamma$ has diameter $\leq M$.
\end{lem}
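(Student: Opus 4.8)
The plan is to exploit the fact that a curved axis behaves, near the vertex $v$ where it bends by more than $\pi$, like the axis of a rank~1 isometry in the sense of Ballmann--Brin; the ``excess angle'' at $v$ forces geodesics that avoid $\gamma$ to project to a bounded set. First I would set up the contraction estimate purely locally: let $v\in\gamma$ be a vertex with incoming and outgoing directions $\xi^-,\xi^+\in\lk^X_v$ at distance $d_v(\xi^-,\xi^+)>\pi$, and put $\eps=d_v(\xi^-,\xi^+)-\pi>0$. Because the action of $g$ is by translations along $\gamma$ and simplices fall into finitely many isometry types along $\gamma$ (as $\gamma/\langle g\rangle$ is compact), in fact all vertices of $\gamma$ in one $\langle g\rangle$-orbit have the same excess, and by Remark~\ref{rem:semisimple} the translation length of $g$ is positive and the combinatorial geometry near each such vertex is uniformly bounded. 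So it suffices to produce a single $M$ working near one vertex $v$ and then propagate by $g$-equivariance.

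Next I would argue the key geometric claim: if $p,q\in\gamma$ are the nearest-point projections of points $x,y$ lying on a common metric ball $B$ disjoint from $\gamma$, then $d(p,q)$ is bounded. The standard \catz tool is the ``flat strip / thin triangle'' argument of Ballmann--Brin for rank~1 axes: if the projections were far apart, then for a point $z\in\gamma$ far between $p$ and $q$, the angle at $z$ between $\gamma$ and the geodesic $zx$ (resp.\ $zy$) would be $\geq\pi/2$, and by convexity of the metric one would build a geodesic from $x$ to $y$ whose midpoint is near $z$; combined with the fact that $x,y$ lie in a ball disjoint from $\gamma$, this forces $z$ to be near $\gamma$ in a way that degenerates. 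The curvature at $v$ enters because it rules out the alternative: a flat half-plane or flat strip bordering $\gamma$ through $v$ would force the incoming and outgoing directions at $v$ to be at angle exactly $\pi$ (the directions pointing into the flat region would interpolate a geodesic of length $\pi$ in $\lk^X_v$ between $\xi^-$ and $\xi^+$), contradicting $d_v(\xi^-,\xi^+)>\pi$. Concretely, I would invoke \cite[II.9.3]{BH} in the same spirit as in the proof of Lemma~\ref{lem:noteasy}: a concatenation $\bar{xp}\cdot(\text{subpath of }\gamma\text{ through }v)\cdot \bar{pq}\cdots$ with angles $\geq\pi/2$ at $p,q$ and angle $>\pi$ at $v$ cannot be shortcut too much, and iterating this across many vertices of $\gamma$ between $p$ and $q$ shows the Tits-boundary endpoints of the two rays of $\gamma$ are far apart whenever a ball disjoint from $\gamma$ projects onto a long subsegment — but the diameter of such a projection is then controlled by how the excess $\eps$ at the (finitely many orbit-representatives of) curved vertices accumulates along $\gamma$.

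More precisely, here is the route I expect to use. Suppose $B=\bar B(c,r)$ is disjoint from $\gamma$ and $x,y\in B$ with projections $p,q$ far apart on $\gamma$. Along the subsegment of $\gamma$ from $p$ to $q$ there are at least $k$ vertices with excess $\geq\eps$, where $k\to\infty$ as $d(p,q)\to\infty$ (by cocompactness of $\gamma/\langle g\rangle$). At each projection point $p,q$ the angle between $\gamma$ and the geodesic to $x$ (resp.\ $y$) is $\geq\pi/2$. Then the piecewise geodesic from $x$ to $p$, along $\gamma$ to $q$, and from $q$ to $y$ has exterior ``defect'' $\geq \pi/2+\pi/2+k\eps$; pushing this through \cite[II.9.3]{BH} (Alexandrov's lemma / the angle-comparison for chains of geodesics) shows $d(x,y)$ grows at least linearly in $k$, hence linearly in $d(p,q)$. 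But $d(x,y)\leq 2r$, giving $d(p,q)\leq M$ for $M$ depending only on $r$ and $\eps$ — and since we may also bound $r$ is irrelevant if we first fix the ball, but for the uniform statement we let $M$ depend on the ball as permitted (re-reading the statement, $M$ is allowed to depend on $\gamma$ but the ball is arbitrary, so I must be careful: in fact the correct reading is that $M$ works for \emph{all} balls, and indeed the estimate $d(p,q)\lesssim d(x,y)/\eps \le 2r/\eps$ is not uniform in $r$). I therefore need the sharper, genuinely rank~1 argument: the curved vertex $v$ gives a point where $\gamma$ does \emph{not} bound a flat half-plane, so by \cite[II.9.3]{BH} applied to the two rays of $\gamma$ from $v$ together with any geodesic segment between points of the two rays that stays distance $>0$ from $\gamma$, the projection of a connected set disjoint from $\gamma$ has diameter bounded by a constant depending only on the excess $\eps$ and the local geometry at $v$ — not on the diameter of the set.

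The main obstacle, then, is precisely this uniformity: turning the ``finite excess at each crossing'' heuristic into a bound on the projection diameter that does not degrade as the ball grows. I expect to handle it exactly as rank~1 contraction is proved in the \catz literature: fix the vertex $v$ on $\gamma$ with $d_v(\xi^-,\xi^+)=\pi+\eps$, and show that for any $x\notin\gamma$ with projection $p$, if $p$ is on the ``$\xi^+$-side'' of $v$ at distance $\geq t_0$ (where $t_0$ depends only on $\eps$ and the shortest edge length in $\lk^X_v$), then the geodesic from $x$ to any point on the ``$\xi^-$-side'' of $\gamma$ must pass within a uniformly bounded distance of $v$; this is because the comparison angle at $v$ would otherwise exceed $\pi$, contradicting curvature. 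Consequently both projections $p,q$ of points on a connected set disjoint from $\gamma$ lie within $t_0$ of $v$ on the same side, or the connecting geodesic crosses near $v$ forcing both within a bounded window around $v$ — in every case $d(p,q)\le M:=2t_0+C(\eps)$. Propagating by $\langle g\rangle$-equivariance (the translates $g^nv$ are all curved with the same excess) and using that a ball disjoint from $\gamma$ has its projection within one fundamental domain's worth of such a window, we get the uniform $M$. I would write the chain-of-geodesics estimate carefully via \cite[II.9.3]{BH} and the link description of angles from \S\ref{sec:CAT(0)}, and leave the Liouville-measure/Poincar\'e machinery of later sections untouched here.
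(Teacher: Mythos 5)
You correctly diagnose the central difficulty — that your first estimate, which bounds $d(x,y)$ from below linearly in the number $k$ of curved vertices crossed, only yields $d(p,q)\lesssim r/\eps$ and hence a constant depending on the radius of the ball — but the ``sharper, genuinely rank~1'' replacement you propose does not close this gap. Its key claim (that because of the single curved vertex $v$, any geodesic from a point projecting far out on the $\xi^+$-side to a point projecting on the $\xi^-$-side must pass uniformly close to $v$, so that the projection of a ball, or even of a connected set, disjoint from $\gamma$ is bounded by a constant depending only on the excess $\eps$ and the local geometry at $v$) is false. Take the Euclidean cone over a circle of length $2\pi+\eps$ and let $\gamma$ be a line through the apex splitting the link into two arcs of length $\pi+\eps/2$: for points $x,y$ at distance $\rho$ from the apex, each at angle $\pi/4$ from one of the two rays of $\gamma$, the geodesic $xy$ stays at distance about $\rho\cos(\pi/4+\eps/4)$ from the apex, the closed ball of radius $0.8\rho$ about a suitable centre in the middle wedge contains $x,y$ and misses $\gamma$, yet the projections of $x$ and $y$ are at distance about $1.4\rho$. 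So contraction is simply not a local phenomenon at one curved vertex (this cone carries no loxodromic translating $\gamma$, so the lemma is not contradicted); any correct bound must couple the excess with the translation length $R$, i.e.\ use that the $\langle g\rangle$-translates of $v$ recur along $\gamma$ every $R$, and the projection must cross on the order of $2\pi/\eps$ of them before anything goes wrong. Your final case analysis also silently omits the case of two projections far apart on the same side of $v$ and both far from $v$, which ``one fundamental domain's worth of window'' does not cover.

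The paper's proof supplies exactly the missing mechanism, and is quite short: set $M=R\lceil 2\pi/\kappa\rceil$ where $\pi+\kappa$ bounds the angle at $v$ from below. If $x,y$ lie in a ball disjoint from $\gamma$ with projections $x',y'$ at distance $>M$, then $x'y'$ contains $n=\lceil 2\pi/\kappa\rceil$ translates $v'_1,\dots,v'_n$ of $v$; by convexity of the ball the geodesic $xy$ misses $\gamma$, and by continuity of the projection one finds $v_1,\dots,v_n$ on $xy$ projecting to the $v'_i$. The region between $xy$ and $x'y'$ is then subdivided into $n+1$ geodesic quadrilaterals $v_iv_{i+1}v'_{i+1}v'_i$, each with Alexandrov angle sum $\leq 2\pi$ by \cite[II.2.11]{BH}. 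At each interior vertex $v_{i+1}$ on $xy$ the two adjacent angles sum to $\geq\pi$, while at each $v'_{i+1}$ they sum to $>\pi+\kappa$ (either both agree with link angles, whose sum exceeds the angle $>\pi+\kappa$ between the two directions of $\gamma$, or one is the Alexandrov angle $\pi$ and the other is $\geq\pi/2$ since $v'_{i+1}$ is a nearest-point projection). Summing gives $n(2\pi+\kappa)<(n+1)2\pi$, i.e.\ $n\kappa<2\pi$, a contradiction; note that nowhere is $d(x,y)$ bounded, which is precisely why $M$ is independent of the ball. If you want to salvage your approach, you should replace the attempt to bound $d(x,y)$ from below (and the local claim at $v$) by this kind of global angle-counting along the strip between $xy$ and its projection, using the convexity of the ball and the periodic recurrence of the curved vertex.
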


\begin{proof} Suppose that $\gamma$ passes through a vertex $v$ with incoming and outgoing directions at angle
$>\pi+\kappa,$ for some $\frac{\pi}{2}>\kappa>0$. Let $R$ be the translation length of~$g$.
We will prove that $M=R\lceil\frac{2\pi}{\kappa}\rceil$ satisfies the lemma. Otherwise, let $x,y$ be points in a closed metric ball disjoint from~$\gamma$, such that the projections $x',y'$ of $x,y$ to $\gamma$ are at distance $>M$. 

There are at least $n=\lceil\frac{2\pi}{\kappa}\rceil$ translates of $v$ under $\langle g \rangle$ on $x'y'$ distinct from $x',y'$. We denote these translates by $v'_1,\ldots, v'_n$, in order in which they appear on $x'y'$. By the continuity of the projection map, there are points $v_1,\ldots,v_n$ lying on the geodesic $xy$ in that order such that each $v_i'$ is the projection of~$v_i$ to~$\gamma$. We additionally denote $v_0=x, v_0'=x', v_{n+1}=y,v'_{n+1}=y'$. For $0\leq i\leq n$,
let $\beta_i$ denote the geodesic quadrilateral $v_iv_{i+1}v'_{i+1}v'_iv_i$. The sum of the four Alexandrov angles of each~$\beta_i$ is $\leq 2\pi$ \cite[II.2.11]{BH}, so the sum of all the Alexandrov angles of all $\beta_i$ is $\leq (n+1)2\pi$. 

On the other hand, for $0\leq i<n$, the sum of the Alexandrov angles of~$\beta_i$ and~$\beta_{i+1}$ at $v_{i+1}$ is $\geq \pi$. We will now prove that the sum of the Alexandrov angles of~$\beta_i$ and~$\beta_{i+1}$ at $v'_{i+1}$ is $>\pi+\kappa$. Indeed, if one of them is not equal to the angle in the usual sense (see $\S2$), then it equals $\pi$. However, since $v'_{i+1}$ is the projection of $v_{i+1}$, the second Alexandrov angle is $\geq \frac{\pi}{2}$, so their sum is $\geq\frac{3\pi}{2}$, as desired. Consequently, we have $n(\pi+\pi+\kappa)< (n+1)2\pi$, and so $n\kappa< 2\pi$, which is a contradiction.
\end{proof}

\begin{lem}
\label{lem:Rtree}
Let $G$ be a group acting almost freely 
on a $\mathrm{CAT}(0)$ triangle complex $X$ with a fixed point $\xi$ in the visual boundary of~$X$. Suppose that there is a curved axis $\gamma$ for some $g\in G$ with one of the limit points $\xi$. Then $G$ is virtually cyclic.
\end{lem}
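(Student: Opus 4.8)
The plan is to exploit Lemma~\ref{prop:contracting}: since $\gamma$ is a curved axis of $g$, the element $g$ is a \emph{contracting} (``rank~1'') isometry, and such elements behave much like loxodromics on a hyperbolic space. First I would fix the constant $M$ from Lemma~\ref{prop:contracting}, so that the projection to $\gamma$ of any ball disjoint from $\gamma$ has diameter $\leq M$. The point $\xi$ is one of the two endpoints of $\gamma$ at infinity; call the other endpoint $\xi_-$. I want to show that every element $h\in G$ must fix the pair $\{\xi,\xi_-\}$ — indeed fix $\xi$, and in fact act as a translation along a line asymptotic to $\gamma$ — which, since $G$ acts almost freely on the (proper enough) complex $X$, forces $G$ to be virtually cyclic.

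The key step is the following projection argument. Since $\xi$ is $G$-fixed, for any $h\in G$ the geodesic ray $h\gamma|_{[0,\infty)}$ (the forward half of $h\gamma$, heading to $h\xi$) has the same endpoint at infinity as $\gamma$ heading to $\xi$, namely $\xi$, so $h\gamma$ and $\gamma$ are asymptotic rays towards $\xi$; by convexity of the distance function and Lemma~\ref{prop:contracting} applied to the far-away part of $h\gamma$, the ray $h\gamma$ must stay within bounded distance of $\gamma$. More precisely: if $h\gamma$ were not contained in a bounded neighbourhood of $\gamma$, one could find points on $h\gamma$ arbitrarily far from $\gamma$ whose nearest-point projections to $\gamma$ nonetheless drift off towards $\xi$ (since the rays are asymptotic), and then small balls around pairs of such points would project onto arbitrarily long subsegments of $\gamma$, contradicting the bound $M$. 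Hence $h\gamma$ lies in a uniform neighbourhood of $\gamma$, so $h\gamma$ and $\gamma$ are bi-asymptotic, and in particular $h\xi_-=\xi_-$ as well. Thus $G$ preserves the set of geodesic lines lying in a bounded neighbourhood of $\gamma$, equivalently $G$ fixes both $\xi$ and $\xi_-$. The union $P$ of all such lines — i.e.\ the union of all geodesics from $\xi_-$ to $\xi$ — is a closed convex $G$-invariant subset; by the product decomposition $P\cong P_0\times\R$ of a parallel set (\cite[II.2.14]{BH}) together with $g$ acting as a nontrivial translation on the $\R$-factor, $G$ preserves this splitting and acts on $P_0$; and a ball of radius $M$ in $P_0$ carries the whole orbit, so $P_0$ is bounded.

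Now I invoke almost freeness. Because $X$ has the metric-ball finiteness property (finitely many isometry types of simplices meeting each ball) we are in the situation of Remarks~\ref{rem:semisimple} and~\ref{rem:translation} — or more directly, the $G$-action on $X$ is proper — so the stabiliser of any bounded subset of $X$ is finite. Since $P_0$ is bounded and $G$-invariant, the kernel $K$ of the action $G\to\mathrm{Isom}(\R)$ (acting on the $\R$-factor of $P$) fixes $P_0\times\{0\}$ up to bounded displacement and hence is finite; and the image of $G$ in $\mathrm{Isom}(\R)$ is a group containing the nontrivial translation by the translation length of $g$, hence is either infinite cyclic or infinite dihedral, in any case virtually $\Z$. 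Therefore $G$ is (finite)-by-(virtually $\Z$), so virtually cyclic, as claimed.

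The main obstacle I expect is making rigorous the claim that $h\gamma$ stays in a bounded neighbourhood of $\gamma$: one has to handle the possibility that $h\gamma$ wanders far from $\gamma$ on a long stretch while still being asymptotic to it at the $\xi$-end, and rule this out using Lemma~\ref{prop:contracting}. The clean way is: let $c(t)$ parametrise $h\gamma$ and let $c'(t)$ be its projection to $\gamma$; since the rays are asymptotic, $c'(t)\to\xi$ as $t\to+\infty$, so $c'$ is coarsely monotone and has unbounded image in $\gamma$. If $\sup_t d(c(t),\gamma)=\infty$, pick $t_1<t_2$ with $d(c(t_i),\gamma)$ large and $d\big(c'(t_1),c'(t_2)\big)>M$; a small ball containing both $c(t_1)$ and $c(t_2)$ is then disjoint from $\gamma$ yet projects onto a subsegment of $\gamma$ of length $>M$, contradicting Lemma~\ref{prop:contracting}. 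Actually, since $c(t_1),c(t_2)$ need not lie in a common small ball, one instead covers the subsegment $[c(t_1),c(t_2)]$ of $h\gamma$ by finitely many balls each disjoint from $\gamma$ (possible once $\inf$ of $d(c(t),\gamma)$ on $[t_1,t_2]$ is large, which holds by convexity of $t\mapsto d(c(t),\gamma)$ since it is large at both ends and the ray is asymptotic to $\gamma$ so cannot dip back), and argues projection-segment-additively. Modulo this elaboration the argument is routine, and everything else reduces to standard \catz facts from \cite{BH}.
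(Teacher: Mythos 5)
There is a genuine gap, and it sits exactly at the step you yourself flag as the main obstacle: the claim that, because $h$ fixes $\xi$, the line $h\gamma$ must lie in a bounded neighbourhood of $\gamma$, hence $h\xi_-=\xi_-$. This cannot be deduced from Lemma~\ref{prop:contracting} in the way you propose, and at the level of generality at which you argue (pure $\mathrm{CAT}(0)$ geometry plus the contraction bound, with no use of almost freeness) it is simply false: already in $\mathbb{H}^2$, where every geodesic is contracting, two geodesic lines can share exactly one endpoint at infinity and diverge at the other end. In that situation the backward half of $h\gamma$ projects to a bounded subset of $\gamma$, so balls disjoint from $\gamma$ never project to segments of length $>M$ and no contradiction with Lemma~\ref{prop:contracting} arises. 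Your refined version is self-defeating: since $t\mapsto d(c(t),\gamma)$ is convex and bounded as $t\to+\infty$, it is non-increasing, so the projections $c'(t)$ become unbounded only along the part of $h\gamma$ that is close to $\gamma$; and whenever a single ball containing $c(t_1),c(t_2)$ is disjoint from $\gamma$, Lemma~\ref{prop:contracting} itself forces $d\bigl(c'(t_1),c'(t_2)\bigr)\leq M$, so the pair of points you want to select cannot exist. The covering-by-balls variant only produces an \emph{upper} bound on how fast the projection of the far segment can spread, which is the wrong direction for a contradiction. Since your argument for the key claim never invokes properness, while the statement ``every isometry fixing $\xi$ carries $\gamma$ to a parallel line'' genuinely requires discreteness (it fails for non-discrete groups of isometries of $\mathbb{H}^2$ fixing $\xi$), the proof cannot be repaired by elaboration alone; the subsequent parallel-set and kernel steps therefore rest on an unproved and generally false premise.

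For comparison, the paper proceeds quite differently: it forms the space $X_\xi$ of geodesic rays representing $\xi$ (with the infimum pseudometric), shows it is a complete $\mathrm{CAT}(0)$ space of geometric dimension $\leq 1$, hence an $\R$-tree, and uses Lemma~\ref{prop:contracting} only to exclude an element of $G$ acting loxodromically on $X_\xi$ (there the two points with far-apart projections are produced on $\gamma$ itself, using $d(\rho,h\rho)>M$, which is a legitimate use of the contraction bound). This yields a global fixed point of $G$ in $X_\xi$, after which the Busemann homomorphism $\psi\colon G\to\R$ is shown to have discrete image via Remark~\ref{rem:translation}, and its kernel, consisting of elliptic elements, is shown to be finite using almost freeness together with \cite[Thm~1.1(i)]{NOP}. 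The fact that this last step needs a substantive external theorem to exclude an infinite torsion kernel is a further sign that a purely elementary projection argument of the kind you sketch, in which the kernel visibly preserves a bounded set, cannot suffice.
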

\begin{proof}
Consider the space of geodesic rays $\rho\colon [0,\infty)\to X$ representing $\xi$, with the pseudometric $d(\rho_1,\rho_2)=\inf_{t_1,t_2}d(\rho_1(t_1),\rho_2(t_2))$. Identifying $\rho_1$ with $\rho_2$ for $d(\rho_1,\rho_2)=0,$ we obtain a metric space whose metric completion~$X_\xi$ is $\mathrm{CAT}(0)$ \cite[Prop~2.8]{L}. Since $X$ has geometric dimension $\leq 2$ (see \cite{K}), by \cite[Rem after Cor~4.4]{C}, we have that $X_\xi$ has geometric dimension $\leq 1$ (more precisely, as we learned from Pierre-Emmanuel Caprace, for any $\rho$ representing $\xi$ the space $X_\xi\times \R$ embeds isometrically in the pointed ultralimit of $(X,\rho(n))_n$, which has geometric dimension $\leq 2$ by \cite[Lem~11.1]{LY}). Since $G$ fixes $\xi$, the action of $G$ on~$X$ induces an action of $G$ on $X_\xi$. 

We will now justify that a complete $\mathrm{CAT}(0)$ space~$X_\xi$ of geometric dimension $\leq 1$ is an $\R$-tree, which we learned also from Pierre-Emmanuel Caprace. For a geodesic triangle $xyz$ in $X_\xi$, let $x'$ be the projection of $x$ to the geodesic~$yz$. If $x'\neq x,y$, then the direction of the geodesic $x'x$ in~$\lk_{x'}^{X_\xi}$ is distinct from that of the geodesic $x'y$.
Thus the geodesic $xy$ must pass through~$x'$, since otherwise mapping it to~$\lk_{x'}^{X_\xi}$ would give a path between distinct points of a discrete set. Analogously, the geodesic $xz$ passes through~$x'$, and so $xyz$ is $0$-thin, justifying that $X_\xi$ is an $\R$-tree. 

Suppose first that there is $h\in G$ acting loxodromically on $X_\xi$. Let $M$ be the constant given by Lemma~\ref{prop:contracting} for $\gamma$. Let $\rho$ be a ray in $\gamma$ representing~$\xi$. Since $h$ acts loxodromically on $X_\xi$, after possibly replacing $h$ by its power, we can assume $d(\rho,h\rho)>M$. Assume without loss of generality that the Busemann function (see \cite[II.8.17 and II.8.20]{BH}) satisfies $B_\xi(\rho(0))\leq B_\xi(h\rho(0))$. 
Then for each $t\geq 0$, the projection $p(t)$ of $\rho(t)$ to $h\gamma$ is contained in $h\rho$, and for $D=d(\rho(0),h\rho(0))$ we have $d(\rho(t),p(t))\leq D$. Pick $k\in \N$ with $kM>2D$. Then by the triangle inequality we have $d(p(0),p(2kM))\geq 2kM-2D>kM$. Consequently, there is $0\leq n<k$ such that $d(p(2nM),p(2(n+1)M))>M$. Thus the closed ball of radius $M$ centred at $\rho((2n+1)M)$ is disjoint from~$h\gamma$ and contains points $\rho(2nM),\rho(2(n+1)M)$ whose projections to $h\gamma$ are at distance $>M$. This contradicts the choice of $M$.

Consequently, $G$ has a global fixed point in $X_\xi$ (which might not be represented by a geodesic ray, but be a point added in the completion). Thus there is $D>0$ such that for any $\eps>0$ there is a geodesic ray $\rho'$ representing~$\xi$ at distance $\leq D$ from $\rho$ and satisfying $d(\rho',g\rho')<\eps$ for each $g\in G$. Consider the homomorphism $\psi\colon G\to \R$ defined by $\psi(g)=B_\xi(gx)-B_\xi(x)$ for any $x\in X$. We will now justify that $\psi$ has discrete image. Otherwise, for any $\eps>0$ and $\rho'$ as above there is $t>0$ such that $d(\rho'(t),g\rho'(t))<2\eps$, but 
$g$ does not fix a point of $X$. This contradicts Remark~\ref{rem:translation} applied with $Y$ containing the $D$-neighbourhood of $\gamma$.

Let $K$ be the kernel of $\psi$. Arguing as in the previous paragraph, we obtain that every $g\in K$ fixes a point of $X$. By \cite[Thm~1.1(i)]{NOP}, every finitely generated subgroup of $K$ fixes a point of $X$. Since $K$ acts almost freely, we have that $K$ is finite and so $G$ is virtually cyclic. 
\end{proof}

\begin{prop}
\label{cor:contracting}
Let $G$ be a group acting almost freely 
on a $\mathrm{CAT}(0)$ triangle complex $X$. 
Assume that $G$ contains a loxodromic element~$g$ with a curved axis $\gamma$. Then $G$ is virtually cyclic or contains a nonabelian free group.
\end{prop}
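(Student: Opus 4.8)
The plan is to use the curved axis $\gamma$ of $g$ to produce a "contracting" direction in $X$ and then run a standard ping-pong/North-South dynamics argument. By Lemma~\ref{prop:contracting}, there is $M$ such that the nearest-point projection to $\gamma$ of any metric ball disjoint from $\gamma$ has diameter $\le M$; this is precisely the statement that $\gamma$ is a \emph{contracting} (equivalently, rank~1) geodesic in the sense of Bestvina--Fujiwara. The first step is to dispose of the degenerate situation in which $G$ fixes an endpoint of $\gamma$ in the visual boundary. If $G$ fixes such a point $\xi$, then Lemma~\ref{lem:Rtree} applies directly and gives that $G$ is virtually cyclic, so we are done. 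Hence from now on we may assume that $G$ fixes neither of the two limit points $\xi^\pm$ of $\gamma$.

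The second step is to boost this non-fixing into an element that moves the pair $\{\xi^+,\xi^-\}$ off itself: there exists $h\in G$ with $\{h\xi^+,h\xi^-\}\cap\{\xi^+,\xi^-\}=\emptyset$. Indeed, since $g$ is loxodromic with contracting axis, the limit set $\Lambda(G)\subset \partial_\infty X$ of the orbit of $\{\xi^\pm\}$ contains at least three points (otherwise $G$ would coarsely preserve $\gamma$ and fix $\{\xi^\pm\}$ as a set, whence a finite-index subgroup fixes each of $\xi^\pm$ and Lemma~\ref{lem:Rtree} again finishes the argument); picking $h$ so that $h\xi^+$ avoids $\{\xi^\pm\}$, and replacing $h$ by a suitable product with a power of $g$ if necessary, one arranges that $hgh^{-1}$ has a contracting axis $h\gamma$ whose endpoint pair is disjoint from that of $\gamma$. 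Here one uses that conjugates of contracting axes are contracting with the \emph{same} constant $M$ (Lemma~\ref{prop:contracting} is conjugation-invariant since translation length is).

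The third step is the ping-pong itself. Set $g_1=g$ and $g_2=hgh^{-1}$, loxodromic with disjoint endpoint pairs and contracting axes $\gamma_1=\gamma$, $\gamma_2=h\gamma$, each with the same contraction constant $M$. The contracting property gives, for large $N$, disjoint "shadow" neighbourhoods $U_1^\pm$ of $\xi_1^\pm$ and $U_2^\pm$ of $\xi_2^\pm$ in $X\cup\partial_\infty X$ such that $g_1^{\pm N}$ maps the complement of $U_1^{\mp}$ into $U_1^{\pm}$, and likewise for $g_2^{\pm N}$; this is the standard North--South dynamics for contracting isometries (the contraction constant controls how projections collapse, so high powers push everything outside one small set into the other). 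With the four sets pairwise disjoint, the Table-Tennis Lemma yields that $g_1^N$ and $g_2^N$ generate a free subgroup of rank~$2$. This completes the proof: either $G$ fixes a boundary point and is virtually cyclic, or it contains a nonabelian free group.

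The main obstacle is the second step — producing the partner element $g_2$ with endpoint pair genuinely disjoint from that of $g$, while retaining a uniform contraction constant. One must rule out the intermediate case where $G$ preserves the unordered pair $\{\xi^+,\xi^-\}$ (so that no good $h$ exists); this is handled by passing to the index-$\le 2$ subgroup fixing both endpoints and invoking Lemma~\ref{lem:Rtree}, which forces virtual cyclicity. Care is also needed because $X$ is not assumed locally compact, so "North--South dynamics" must be phrased via the projection estimate of Lemma~\ref{prop:contracting} rather than via compactness of the visual boundary; but since the entire ping-pong can be set up using only the metric projection to the two axes and the uniform bound $M$, local compactness is not actually required.
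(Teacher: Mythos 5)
Your skeleton is the same as the paper's: Lemma~\ref{prop:contracting} makes $\gamma$ contracting (rank~$1$ in the sense of Bestvina--Fujiwara), Lemma~\ref{lem:Rtree} (applied to the finite-index subgroup fixing an endpoint, which contains a power of $g$) disposes of the boundary-fixing cases, and one then seeks $f\in G$ with $f\gamma$ and $\gamma$ having disjoint endpoint pairs, after which high powers of $g$ and $fgf^{-1}$ generate a free group; the paper delegates this last step to \cite[Prop~5.9]{BF} instead of redoing North--South dynamics by hand, but that difference is cosmetic.

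The genuine gap is in your second step, the production of the partner element. Having found $h$ with $h\xi^+\notin\{\xi^+,\xi^-\}$, you propose to handle the possibility $h\xi^-\in\{\xi^+,\xi^-\}$ by ``replacing $h$ by a suitable product with a power of $g$''. This cannot work: since $g$ fixes both $\xi^+$ and $\xi^-$, every element $f=g^ahg^b$ satisfies $f\xi^-=g^a(h\xi^-)$, which again lies in $\{\xi^+,\xi^-\}$ whenever $h\xi^-$ does; so no element of the double coset $\langle g\rangle h\langle g\rangle$ has endpoint pair disjoint from $\{\xi^\pm\}$. Consequently the configuration in which some (or every) translate $h\{\xi^\pm\}\neq\{\xi^\pm\}$ still meets $\{\xi^\pm\}$ in exactly one point is not ruled out by your trichotomy (global fixed endpoint / preserved pair / disjoint translate), and this is precisely the delicate case. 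Excluding it requires a genuine additional argument using the rank-$1$ property together with properness of the almost free action --- for instance, if $h\gamma$ and $\gamma$ share exactly one endpoint then the asymptotic rays fellow-travel and the elements $hg^{n}h^{-1}g^{-n}$ have uniformly bounded displacement of a suitable point, which properness turns into a coincidence $hg^{k}h^{-1}=g^{k}$ and hence $h\{\xi^\pm\}=\{\xi^\pm\}$, a contradiction --- or one simply invokes the Bestvina--Fujiwara machinery as the paper does. The remainder of your argument (uniform contraction constant for the conjugate axis, ping-pong via the projection bound $M$ without local compactness) is an acceptable sketch of what \cite[Prop~5.9]{BF} provides, but as written the proof is incomplete at the step above.
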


\begin{proof}
By Lemma~\ref{prop:contracting}, $g$ is rank~$1$ in the sense of \cite[Def~5.1]{BF}. By Lemma~\ref{lem:Rtree} we can assume that $G$ does not have a finite index subgroup fixing a limit point of $\gamma$.
Then there is $f\in G$ with $\gamma$ and $f\gamma$ having disjoint limit point pairs. Consequently, by \cite[Prop~5.9]{BF}, for some $n$ the elements $g^n$ and $fg^nf^{-1}$ generate a nonabelian free group.
\end{proof}

\section{Extrationality}
\label{sec:extrationality}

The main result of this section will be Proposition~\ref{prop:rank1}, where we will show that in the absence of unfoldable vertices and curved axes, the complex~$X$ enjoys a particularly strong rationality property of angles, which we call extrationality.

\begin{defin}
\label{def:patch}
The \emph{branching locus} $E$ of a triangle complex $X$ is the subcomplex of $X$ that is the union of all the closed edges of degree $\geq 3$.
A \emph{patch} of $X$ is a maximal connected subspace $P$ of $X\setminus E$ such that $P\setminus X^0$ is connected; see Figure~\ref{f:2}. If $X$ is simply connected, then by Van Kampen's theorem $P$ is a planar surface, and so we can choose an orientation on~$P$.

We equip $P\setminus X^0$ with the length metric induced from $X\setminus X^0$ (see \cite[I.3.24]{BH}). Let $\overline P$ denote the completion of $P\setminus X^0$, which admits an obvious embedding of $P$. Furthermore, $\overline P$ admits an obvious triangle complex structure and a simplicial map  $\overline P\to X$
fitting the following commutative diagram.
\[
\begindc{\commdiag}[20]
\obj(12,1)[a']{$\overline{P}$}
\obj(12,1)[a]{}
\obj(35,1)[b]{$X$}
\obj(35,1)[b']{}
\obj(35,16)[c]{$P$}
\obj(35,17)[c']{}
\mor{a}{b'}{}
\mor{c}{b}{}
\mor{c}{a}{}
\enddc
\]
Note that $\overline{P}$ is a connected surface with boundary, which we denote $\partial P$.
\end{defin}

\begin{figure}[h!]
	\centering
	\includegraphics[width=0.8\textwidth]{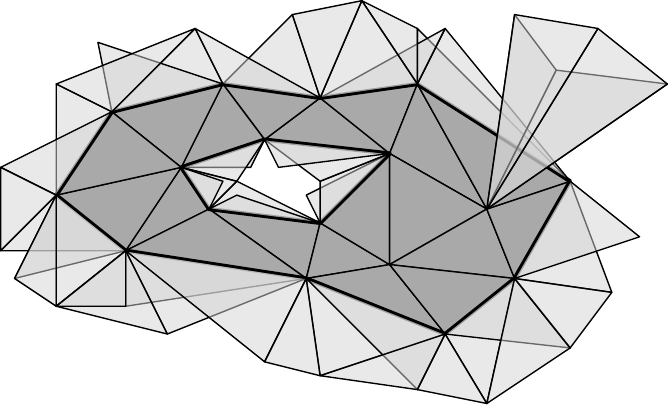}
	\caption{A patch (in dark grey) 
	}
	\label{f:2}
\end{figure}

\begin{defin}
A triangle complex $X$ is \emph{piecewise Euclidean} if all its triangles are geodesic Euclidean triangles.
A piecewise Euclidean triangle complex $X$ is \emph{rational} if for any vertex $v$ of $X$ all cycles and segments (see \S\ref{sec:folding}) in the link of $v$ have lengths commensurable with $\pi$. In particular, the angle at~$v$ between any edges of the branching locus $E$ is then commensurable with $\pi$.
A rational triangle complex~$X$ is \emph{extrational}, if 
\begin{itemize}
\item for any vertex $v$ of $X$ with a component $C$ of $\mathrm{lk}^X_v$ a circle, we have that the length of~$C$ is $2\pi$, and 
\item each homomorphism $\psi$ defined below is trivial.
\end{itemize}
We define $\psi=\psi(P)$ for each patch $P$ of $X$. Consider the chain complex $C_*(\overline P,\partial P)$ consisting of those singular chains that are affine w.r.t.\ the affine structure on $\overline P$ induced by the piecewise Euclidean metric. Note that the affine structure on $\overline P$ has singularities at the points $x$ of $\partial P$ with $\lk_x^{\overline P}$ of length $\neq \pi$, and so we require our affine chains to be disjoint from such~$x$ except possibly at the vertices. For each $x\in \overline P$ choose (not necessarily continuously) a direction $\xi_x\in \mathrm{lk}^{\overline P}_x$ at $x$, with the only restriction that for $x\in \partial P$, the direction $\xi_x$ corresponds to one of the edges in~$\partial P$ containing~$x$. For an affine singular $1$-simplex $\sigma\to \overline P$ with endpoints $x$ and $y$, let $\psi(\sigma)\in \R/\pi\Q$ be the oriented angle 
between~$\xi_x$ and~$\sigma$ at $x$ minus the oriented angle between~$\xi_y$ and $\sigma$ at $y$. Note that since $X$ was rational, this equals $0$ mod $\pi\Q$ for $\sigma$ in~$\partial P$, and so we obtain a homomorphism $\psi\colon C_1(\overline P,\partial P)\to \R/\pi\Q$. Note that the restriction of $\psi$ to $Z_1(\overline P,\partial P)$ does not depend on the choice of the $\xi_x$. Furthermore, for each affine singular $2$-simplex $\tau$ we have $\psi(\partial \tau)=\pm\pi=0$ mod $\pi\Q$, and so $\psi$ descends to a homomorphism $\psi\colon H_1(\overline P,\partial P)\to \R/\pi\Q$. It is not hard to check that our $H_1(\overline P,\partial P)$ coincides with the usual first (singular) homology group.
\end{defin}

We will need the following variant of \cite[Lem~7.4]{BB} that was implicit in the proof of \cite[Prop 3.4]{NOP}.

\begin{lem}
\label{lem:2npoints}
Let $G$ be a group acting almost freely
on a $\mathrm{CAT}(0)$ triangle complex $X$ that is an increasing union of 
essential $G$-\sics\
Furthermore, assume that there is a vertex $v$ of $X$ with points $\xi_i,\eta_i\in \mathrm{lk}^X_v$, for $i=1,\ldots, n,$ such that 
\begin{itemize}
\item
 $d^X_v(\xi_i,\eta_i)=\pi$ for $i=1,\ldots, n,$ and 
 \item
 $d^X_v(\eta_i,\xi_{i+1})\geq \pi$ for $i=1,\ldots, n-1,$ and 
 \item
 $d^X_v(\eta_n, \xi_1)>\pi$.
\end{itemize}
Then $G$ contains a loxodromic element $g$ with a curved axis.
\end{lem}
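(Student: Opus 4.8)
The plan is to construct the curved axis by building a long, controlled piecewise geodesic in $X$ that starts and ends in a common simplex, then closing it up using a Poincar\'e recurrence / pigeonhole argument over the $G$-action, and finally upgrading the closed-up path to a genuine axis via Theorem~\ref{thm:localisometry}. First I would note that the hypotheses on the points $\xi_i,\eta_i$ in $\mathrm{lk}^X_v$ give a local geodesic concatenation through $v$: a geodesic segment realising $d^X_v(\xi_i,\eta_i)=\pi$ means there is a local geodesic through $v$ with incoming direction (the antipode of) $\xi_i$ and outgoing direction $\eta_i$, having Alexandrov angle $\pi$ at $v$; the inequalities $d^X_v(\eta_i,\xi_{i+1})\geq \pi$ say the successive directions again make angle $\geq\pi$ at $v$, so concatenating these segments through $v$ produces a local geodesic that passes through $v$ exactly $n$ times (or wraps around), and by Theorem~\ref{thm:localisometry} (using that the Alexandrov angles at all breakpoints, which are all at $v$, are $\pi$) this concatenation is in fact a geodesic segment $\mu$ in $X$ whose endpoints lie near $v$. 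The strict inequality $d^X_v(\eta_n,\xi_1)>\pi$ is what will force a vertex where the incoming and outgoing directions are at angle $>\pi$, i.e.\ a curved point, after we close the path up.

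Next I would carry out the closing-up step. Since $X$ is an increasing union of essential $G$-\sics, the whole configuration (the vertex $v$, the finitely many directions, the geodesic $\mu$) lies in some essential $G$-\sics\ $Z=Z_k$, on which $G$ acts properly and cocompactly; in particular there are only finitely many $G$-orbits of simplices meeting any bounded neighbourhood of $\mu$, and the translation lengths of elements of $G$ form a discrete set (Remark~\ref{rem:translation}). The idea, following the argument of \cite[Lem~7.4]{BB} as used in \cite[Prop~3.4]{NOP}, is to iterate: translate the local geodesic by group elements so as to splice together infinitely many copies of the ``angle $\geq \pi$ at $v$'' pattern along a locally geodesic line, and then use a pigeonhole/recurrence argument on the (finitely many) possible local pictures at translates of $v$ to find $g\in G$ and a power such that a subsegment of this line returns to a $g$-translate of its starting configuration. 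That produces a $g$-periodic local geodesic line $\omega$; by Theorem~\ref{thm:localisometry} (Alexandrov angles $\pi$ at every breakpoint since every breakpoint is a translate of $v$ with angle $\geq \pi$) $\omega$ is a genuine geodesic line, hence an axis for $g$, so $g$ is loxodromic by Remark~\ref{rem:semisimple}. Finally, the strict inequality $d^X_v(\eta_n,\xi_1)>\pi$ guarantees that at least one breakpoint of $\omega$ (a translate of $v$ where the $\eta_n$-to-$\xi_1$ transition occurs) has incoming and outgoing directions at angle strictly $>\pi$, so $\omega$ is curved in the sense of Definition~\ref{def:rank}.

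I expect the main obstacle to be the recurrence/closing-up step: making precise the bookkeeping of ``local pictures at translates of $v$'' so that pigeonhole applies, and ensuring that when we close up we genuinely retain a point of strict angle excess rather than accidentally closing up along a subsegment that only uses the non-strict transitions. One clean way to handle this is to form, in $Z$, the bi-infinite periodic concatenation of the pattern $(\mu$ followed by the short turn realising $d^X_v(\eta_n,\xi_1)>\pi$ back to a translate of the start$)$ directly: since $G$ acts cocompactly on $Z$, translate repeatedly so that each ``block'' is carried to the next by a group element, and then the discreteness of translation lengths (Remark~\ref{rem:translation}) together with properness forces, after finitely many blocks, an exact repetition $g\cdot(\text{block }0)=(\text{block }N)$ for some $g\in G$; this $g$ is the desired loxodromic element and, by construction, every fundamental domain of $\omega/\langle g\rangle$ contains a curved breakpoint. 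The remaining verifications — that the various concatenations are local geodesics, that the angle bounds are preserved under the simplicial $G$-action, and that $\omega$ is not eventually contained in a lower-dimensional flat — are routine given Theorems~\ref{thm:localisometry} and~\ref{thm:globalCAT(0)} and the discreteness statements of Remarks~\ref{rem:semisimple} and~\ref{rem:translation}.
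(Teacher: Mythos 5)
There is a genuine gap, and it is at the heart of the lemma. Your first paragraph treats the hypotheses as if they produced an actual broken geodesic $\mu$ in $X$ ``passing through $v$ exactly $n$ times'': but the data $\xi_i,\eta_i$ are only directions in the single link $\mathrm{lk}^X_v$, so all they give are geodesic \emph{germs} at $v$ with angle $\pi$ (resp.\ $\geq\pi$, $>\pi$) between prescribed pairs of directions. There is no reason whatsoever that the geodesic leaving $v$ in direction $\eta_i$ ever returns to $v$ (or to any vertex), so the segments you want to concatenate do not exist, and no geodesic $\mu$ ``with endpoints near $v$'' can be extracted from the link data alone. Producing the connecting pieces is exactly the nontrivial content: the paper invokes \cite[Lem~5.4]{NOP} (a geodesic-flow recurrence statement in the spirit of \cite[\S 3 and Lem~7.4]{BB}, valid because $G$ acts properly and cocompactly on an essential $Z$), which for every $\eps>0$ supplies a broken local geodesic $\omega=\omega_1\cdots\omega_{6n}$ visiting translates $v, g_1v,\dots,g_nv=gv$ of $v$, with directions at these translates only $\eps$-close to the translated $\xi_i,\eta_i$ and with angles $>\pi-\eps$ at the remaining breakpoints. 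This also produces the group element $g$; it is not found afterwards by a pigeonhole on ``local pictures'', and no exact repetition can be forced by discreteness of translation lengths, since the recurrence is inherently approximate.

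The approximation causes a second problem your proposal does not address: once the directions are only $\eps$-close to the prescribed ones, the non-strict hypotheses $d_v^X(\eta_i,\xi_{i+1})\geq\pi$ may degrade to angles slightly below $\pi$ at the intermediate translates of $v$, so Theorem~\ref{thm:localisometry} cannot be applied to the concatenation $\omega$ (or to your periodic line) directly, contrary to your claim that ``every breakpoint is a translate of $v$ with angle $\geq\pi$''. The paper's way around this is to discard the intermediate breakpoints entirely: replace $\omega$ by the genuine CAT(0) geodesic $\gamma$ from $v$ to $gv$ and use \cite[Lem~2.5]{BB} to bound the deviation of its endpoint directions from $\xi_1$ and $g\eta_n$ by $(6n-1)\eps$; only the \emph{strict} inequality $d_v^X(\eta_n,\xi_1)>\pi$ needs to survive, which it does for $\eps$ small, so $\bigcup_{l\in\Z}g^l\gamma$ has angle $>\pi$ at the translates of $gv$ and Theorem~\ref{thm:localisometry} makes it a (curved) axis for $g$. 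Your outline has the right final shape (close up along translates, apply Theorem~\ref{thm:localisometry}, get a curved axis), but without the recurrence input and the pass-to-the-straight-geodesic step the argument does not go through.
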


\begin{proof} Let $Z\subset X$ be an essential $G$-\sics\ containing $v$, such that $\mathrm{lk}^{Z}_v$ contains $\xi_i,\eta_i$, with
$d^{Z}_v(\xi_i,\eta_i)=\pi,$ for $i=1,\ldots, n$. 
By \cite[Lem~5.4]{NOP} (which was stated in terms of the compact quotient but has the same proof for proper and cocompact actions), for any $\eps>0$ there is a path $\omega=\omega_1\cdots \omega_{6n}$ in $Z$ such that
\begin{itemize}
\item
paths $\omega_j$ are local geodesics in $Z$, and
\item
there are $g_0=\mathrm{id}, g_1,\ldots, g_n=g\in G$ such that paths $\omega_{3i+1}$ start at $g_iv$, paths $\omega_{3i}$ end at $g_{i}v$, and except for that $\omega$ is disjoint from the vertex set~$Z^0$ and transverse to $Z^1$, and
\item
the starting direction of $\omega_{3i+1}$ is at distance $<\frac{\eps}{2}$ to $g_i\xi_{i+1}$ in~$\mathrm{lk}^{Z}_{g_iv}$, and the ending direction of $\omega_{3i}$ is at distance $<\frac{\eps}{2}$ to $g_i\eta_i$, and
\item
at the remaining breakpoints, $\omega_{j}$ and $\omega_{j+1}$ are at angle $>\pi-\eps$ (and $\leq \pi$ since outside $Z^0$).
\end{itemize}

Since $\omega_j$ are disjoint from $Z^0$ and transverse to $Z^1$, by Theorem~\ref{thm:localisometry} they are geodesics in $X$. The last two bullets hold in $X$ as well. In particular, 
at all the breakpoints $\omega_{j}$ and $\omega_{j+1}$ are at angle $>\pi-\eps$. By \cite[Lem~2.5]{BB}, the geodesic $\gamma$ in $X$ with the same endpoints as $\omega$ starts and ends in directions at distance $<(6n-1)\eps$ to $\xi_1,g\eta_n$. Consequently, for $\eps$ sufficiently small, by Theorem~\ref{thm:localisometry} we have that $\bigcup_{l\in \Z}g^l\gamma$ is a curved axis for $g$. 
\end{proof}

\begin{prop}
\label{prop:rank1}
Let $G$ be a group acting almost freely
on a $\mathrm{CAT}(0)$ triangle complex $X$ that is an increasing union of essential $G$-\sics\
and none of whose links are unfoldable. 
If $X$ is not $G$-equivariantly isometric (by a possibly non-simplicial isometry) to a piecewise Euclidean triangle complex~$X'$ or 
$X$ is isometric to such $X'$ but $X'$ is not extrational, then $G$ is virtually cyclic or contains a nonabelian free group.
\end{prop}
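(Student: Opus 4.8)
The plan is to argue contrapositively: assuming $G$ is neither virtually cyclic nor contains a nonabelian free group (so in particular, by Proposition~\ref{cor:contracting}, $G$ has no loxodromic element with a curved axis), we produce a piecewise Euclidean model $X'$ isometric to $X$ and show it is extrational. First I would use the absence of curved axes together with Lemma~\ref{lem:2npoints} to rule out the configurations of points $\xi_i,\eta_i$ in vertex links described there. This is the engine: any time we find in some $\mathrm{lk}^X_v$ a cyclic sequence of ``antipodal pairs'' with consecutive gaps $\geq \pi$ and at least one strict, we get a contradiction. I would extract from this the rigidity needed to build $X'$ and to verify extrationality.

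The construction of $X'$ proceeds vertex-star by vertex-star, as in the proof of Theorem~\ref{thm:localCAT(0)}: on each triangle, the condition that $\mathrm{lk}^X_v$ contains no ``forbidden configuration'' should force the Gaussian curvature to vanish identically and the geodesic curvatures along edges to vanish, so that each triangle is already flat — or else one finds a geodesic passing through $v$ whose angle defect, combined with girth $\geq 2\pi$ and the structure of the link, yields the Lemma~\ref{lem:2npoints} configuration. Concretely, if some triangle were negatively curved somewhere, one could find a short geodesic bigon or a local geodesic through a nearby vertex producing two directions at angle $>\pi$ at that vertex (a curved axis after closing up via a Poincaré-type recurrence argument, using that $X$ is an increasing union of $G$-\sics\ acted on cocompactly), contradicting Proposition~\ref{cor:contracting}. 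Once every triangle is flat and edges are geodesic, replacing each smooth triangle by the Euclidean triangle with the same side lengths and angles gives a $G$-equivariant (non-simplicial) isometry $X\to X'$ onto a piecewise Euclidean complex.

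Next I would establish rationality of $X'$. For a vertex $v$, suppose a cycle or segment in $\mathrm{lk}^{X'}_v$ has length incommensurable with $\pi$. Iterating ``go around the cycle'' produces, by density of $\{n\theta \bmod \pi\}$, arbitrarily good approximations to the configuration $d_v(\xi_i,\eta_i)=\pi$, $d_v(\eta_i,\xi_{i+1})\geq\pi$ with the last gap strict — one runs around enough times so that the accumulated lengths land just past multiples of $\pi$. Using the non-unfoldability hypothesis to guarantee the link is not a degenerate clover-plus-circle (which would be the one place this approximation scheme could fail to close up strictly), Lemma~\ref{lem:2npoints} applies and gives a curved axis, a contradiction. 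The same incommensurability argument forces every circle component of a link to have length exactly $2\pi$, and forces the angle between branching-locus edges at $v$ to be commensurable with $\pi$. The main obstacle here is handling links that are ``almost'' unfoldable or contain long degree-$2$ segments arranged so that a naive running-around argument only produces gaps equal to $\pi$ rather than strictly greater; this is precisely why the non-unfoldability hypothesis is imposed, and the careful bookkeeping of which segment to traverse (and in which direction, to make the final gap strict) will be the delicate point.

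Finally, for extrationality it remains to show each homomorphism $\psi=\psi(P)\colon H_1(\overline P,\partial P)\to\R/\pi\Q$ is trivial. Given a nontrivial class, represent it by an affine cycle $\sigma$ in $\overline P$ with $\psi(\sigma)\neq 0 \bmod \pi\Q$; this measures a total ``angular holonomy'' around a loop in the flat-with-cone-points surface $\overline P$ that is incommensurable with $\pi$. Unrolling this loop inside $X'$ — following the corresponding locally geodesic behaviour and using that $\overline P$ embeds simplicially into $X'$ — yields, at some vertex on the boundary where the loop closes, two directions whose angle is a nonzero element of $\R/\pi\Q$, and iterating the loop produces the full Lemma~\ref{lem:2npoints} configuration (the incommensurability guarantees the strict inequality in the last step after enough iterations). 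Again one closes the resulting local geodesic into a genuine axis via the Poincaré recurrence argument on the exhausting $G$-\sics, obtaining a curved axis and contradicting Proposition~\ref{cor:contracting}. Thus $X'$ is extrational, completing the contrapositive. I expect the Poincaré-recurrence closing-up step and the precise control of angle defects along long locally geodesic arcs — converting ``approximate'' curved configurations into honest curved axes — to be the technical heart, with the non-unfoldability hypothesis doing exactly the work needed to avoid the one degenerate family where the strict inequality cannot be arranged.
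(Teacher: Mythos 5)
Your overall architecture agrees with the paper's: assume no nonabelian free subgroup and not virtually cyclic, so by Proposition~\ref{cor:contracting} there is no loxodromic with a curved axis, and then use Lemma~\ref{lem:2npoints} to rule out, case by case, failure of piecewise Euclideanness, of rationality, and of extrationality. But two of your steps have genuine gaps. First, in the non--piecewise-Euclidean case you claim that negative Gaussian curvature in the interior of a triangle (or a negative sum of geodesic curvatures along an edge) lets one ``find a local geodesic through a nearby vertex producing two directions at angle $>\pi$.'' This does not follow: all vertex links of $X$ can have girth exactly $2\pi$ while the metric is non-flat, so no configuration as in Lemma~\ref{lem:2npoints} need exist at any vertex of $X$ itself. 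The paper first invokes \cite[Prop~2.11]{BB} to get one of three concrete failures, and then, for the interior-curvature and edge-curvature cases, uses \cite[Lem~5.5]{NOP} to pass to an auxiliary $\mathrm{CAT}(0)$ complex $\overline X$ (a $G$-equivariant subdivision with replaced Riemannian metrics) possessing a vertex whose link is either a circle of length $>2\pi$ or a union of $2\pi$-circles glued along a short arc; only in $\overline X$ does Lemma~\ref{lem:2npoints} apply. That metric-replacement step is exactly the missing idea in your sketch.

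Second, your treatment of extrationality is both vague and misallocates the unfoldability hypothesis. The classes on which $\psi$ is nontrivial live in $H_1(\overline P,\partial P)$ and are represented by arcs with endpoints on $\partial P$, not loops, so ``iterating the loop'' to accumulate angular holonomy has no evident meaning; $\psi(\alpha)\neq 0$ only constrains the angles $\alpha$ makes with $\partial P$ at its two endpoints. The paper instead takes a shortest representative $\alpha$, observes it is a local geodesic meeting the branching locus only at its endpoints $x,x'$, and analyses the segments $e,e'$ of $\lk^X_x,\lk^X_{x'}$ containing the directions of $\alpha$: shortness forces both to have length $\geq\pi$, nontriviality of $\psi(\alpha)$ forces one, say $e$, to have length $>\pi$, and then an explicit case analysis (length $>2\pi$; length in $(\pi,2\pi)$ or $2\pi$ with distinct endpoints, via \cite[Lem~7.6]{BB}; length $2\pi$ with coincident endpoints) produces the points needed for Lemma~\ref{lem:2npoints}. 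The unfoldability hypothesis is spent precisely in the last case, where the complement of $e$ in $\lk^X_x$ would otherwise be a clover, making the link unfoldable. By contrast, the rationality step --- where you propose to use non-unfoldability to ``close up strictly'' --- needs none of it in the paper: there one uses \cite[Prop~7.7]{BB} to find a locally injective cycle $\beta$ of length incommensurable with $\pi$, \cite[Lem~6.1(iii)]{BB} to find two points at distance $>\pi+\delta$, and a path factoring through the universal cover of $\beta$ of length $(2n-1)\pi+\delta'$ to feed Lemma~\ref{lem:2npoints}. (Minor point: the closing-up of the resulting piecewise geodesics into an axis is done inside Lemma~\ref{lem:2npoints} via \cite[Lem~5.4]{NOP}, not by Poincar\'e recurrence, which enters only in Section~\ref{sec:free}.)
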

\begin{proof}
To prove that $G$ is virtually cyclic or contains a nonabelian free group in each case we will show the existence of a curved axis in $X$ (or in a different $\mathrm{CAT}(0)$ triangle complex $\overline X$) for an element of $G$, since then the proposition follows from Proposition~\ref{cor:contracting}.

Assume first that $X$ is not $G$-equivariantly isometric to a piecewise Euclidean triangle complex $X'$. Then by \cite[Prop~2.11]{BB} there is 
\begin{enumerate}[(i)]
\item
a point in the interior of a triangle of $X$ with negative Gaussian curvature, or 
\item
a point in the interior of an edge of $X$ with negative sum of geodesic curvatures of some two incident triangles, or
\item
a vertex $v$ of $X$ with $\mathrm{lk}^X_v$ a circle of length $>2\pi$. 
\end{enumerate}

In case (iii), or, more generally, if $\mathrm{lk}^X_v$ has a component $C$ that is a circle of length $>2\pi$, let $\xi_1,\eta_1$ be points at distance $\pi$ in $C$, and let $\eta_2,\xi_2$ be their antipodal points. Applying 
Lemma~\ref{lem:2npoints} with $n=2$, we obtain a curved axis. In cases~(i) and~(ii), by \cite[Lem~5.5]{NOP}, there is a $\mathrm{CAT}(0)$ triangle complex~$\overline X$, obtained from~$X$ by a $G$-equivariant subdivision and a $G$-equivariant replacement of the smooth Riemannian metrics, 
with a vertex $u\in \overline X$ whose $\lk^{\overline X}_u$ is either 
\begin{itemize}
\item
a circle of length $>2\pi$, or 
\item
a graph obtained from a family of disjoint circles $C_1,C_2,\ldots$ of length~$2\pi$ by glueing them along a nontrivial arc $b$ of length $<\pi$. 
\end{itemize}

The first bullet brings us to case (iii). In the case of the second bullet, let $\xi_1,\xi_2\in C_1\setminus b$ and $\eta_1,\eta_2\in C_2\setminus b$ be points at distance $\frac{\pi}{2}$ from the endpoints of $b$, with $d^{\overline X}_u(\xi_1,\eta_1)=d^{\overline X}_u(\xi_2,\eta_2)=\pi$. Applying 
Lemma~\ref{lem:2npoints} with $n=2$, we obtain a curved axis in $\overline X$, as desired. 

Thus without loss of generality we can assume that $X$ is a piecewise Euclidean triangle complex. If $X$ is not rational, then by 
\cite[Prop~7.7]{BB}, applied to an essential $G$-\sics, there is a closed locally injective edge-path $\beta$ in some $\mathrm{lk}^X_v$ whose length is not commensurable with $\pi$. In particular, by \cite[Lem~6.1(iii)]{BB}, there are points $\xi,\eta$ in $\mathrm{lk}^X_v$ at distance $>\pi+\delta,$ for some $\delta>0$. (One could apply \cite[Cor~1.7]{NOP} to find such $\xi,\eta$ in $\beta$, but it does not simplify the argument.) Let $\beta_-$ (resp.\ $\beta_+$) be the shortest path from $\xi$ (resp.\ $\eta$) to $\beta$. Since the length of $\beta$ is not commensurable with~$\pi$, there is a path $\beta_-\beta_0\beta_+$ with $\beta_0$ factoring through the universal cover of $\beta$ whose length equals $(2n-1)\pi+\delta'$ for some $n\in \N$ and $0\leq \delta'\leq \delta$. Choosing $\xi_1=\xi,\eta_1,\xi_2,\ldots, \eta_{n}$ as consecutive points at distance~$\pi$ along that path, we have $d^{X}_v(\xi_1,\eta_n)>\pi$. Applying 
Lemma~\ref{lem:2npoints}, we obtain a curved axis. 

Finally, if $X$ is not extrational, let $P$ be a patch of $X$ with nontrivial $\psi=\psi(P)$. Since $P$ is planar, there is an element in $H_1(\overline P,\partial P)$ represented by a piecewise affine path $\alpha$ in $\overline P$ with endpoints in $\partial P$ and $\psi(\alpha)\neq 0$. Let $\alpha$ be shortest among such paths, which exists since $\mathrm{Stab}(P)$ acts cocompactly on $\overline P$. Note that then $\alpha$ does not intersect~$\partial P$ except at its endpoints, since otherwise we could decompose it into two shorter paths, with $\psi$ nontrivial on at least one of them. 
Thus the image of~$\alpha$ in $X$ (for which we keep the same notation)
is a  local 
geodesic in $X$ that intersects the branching locus $E$ exactly at its endpoints $x,x'$. Let $e$ (resp.~$e'$) be the segment of $\mathrm{lk}^X_x$ (resp.\ $\mathrm{lk}^X_{x'}$) containing the point corresponding to the direction of $\alpha$. By the shortness condition, we have that $e$ (resp.~$e'$) has endpoints at distance $\geq \frac{\pi}{2}$ from $x$ (resp.\ $x'$), and so is of length $\geq \pi$. They cannot both have length~$\pi$, since then we would have $\psi(\alpha)=0$, so assume without loss of generality that the length $l$ of $e$ is $>\pi$. 

If $l> 2\pi$, then it is easy to find points $\eta_1,\xi_1,\xi_2,\eta_2$ lying on $l$ in that order and satisfying the hypothesis of Lemma~\ref{lem:2npoints} with $n=2$. If $2\pi> l>\pi$ or $l=2\pi$ and the endpoints of~$e$ are distinct, then the construction of such points is given in the proof of \cite[Lem~7.6]{BB}. It remains to consider the case where $l=2\pi$ and where both endpoints of $e$ are equal to a vertex $y$. Let $\Gamma_2$ be the graph obtained from $\mathrm{lk}^X_x$ by removing $e$. If $\Gamma_2$ contains a point~$z$ at distance $>\pi$ from~$y$, then it is easy to find points $\xi_1,\eta_1,\xi_2,\eta_2$ on a geodesic from $z$ to the midpoint of $e$ satisfying the hypothesis of Lemma~\ref{lem:2npoints} with $n=2$. Otherwise, $\Gamma_2$ is a clover, contradicting the assumption that $\mathrm{lk}^X_x$ is not unfoldable.
\end{proof}

\section{Sheared geodesics}
\label{sec:shear}

In this section we prove that the following piecewise geodesics have distinct endpoints. We will use some vocabulary from \S\ref{sec:notZ2}.

\begin{defin}
\label{def:sheared}
A \emph{sheared geodesic} in a piecewise Euclidean triangle complex $X$ is a concatenation $\gamma_1\cdot\gamma_2\cdots \gamma_{2k-1}\cdot\gamma_{2k}$ of geodesics such that (see Figure~\ref{f:5}):
\begin{itemize}
\item
for $i=1,\ldots, k,$ the (possibly trivial) geodesic $\gamma_{2i}$ lies in the interior of an edge $e_i$ of~$X$, and
\item
for $i=1,\ldots, k-1,$ the geodesic $\gamma_{2i-1}$ ends and the geodesic $\gamma_{2i+1}$ starts perpendicularly to $e_i$ in triangles of $X$ that are distinct, and the geodesic $\gamma_{2k-1}$ ends perpendicularly to $e_k$ in a triangle.
\end{itemize}
\end{defin}

\begin{figure}[h!]
	\centering
	\includegraphics[width=0.99\textwidth]{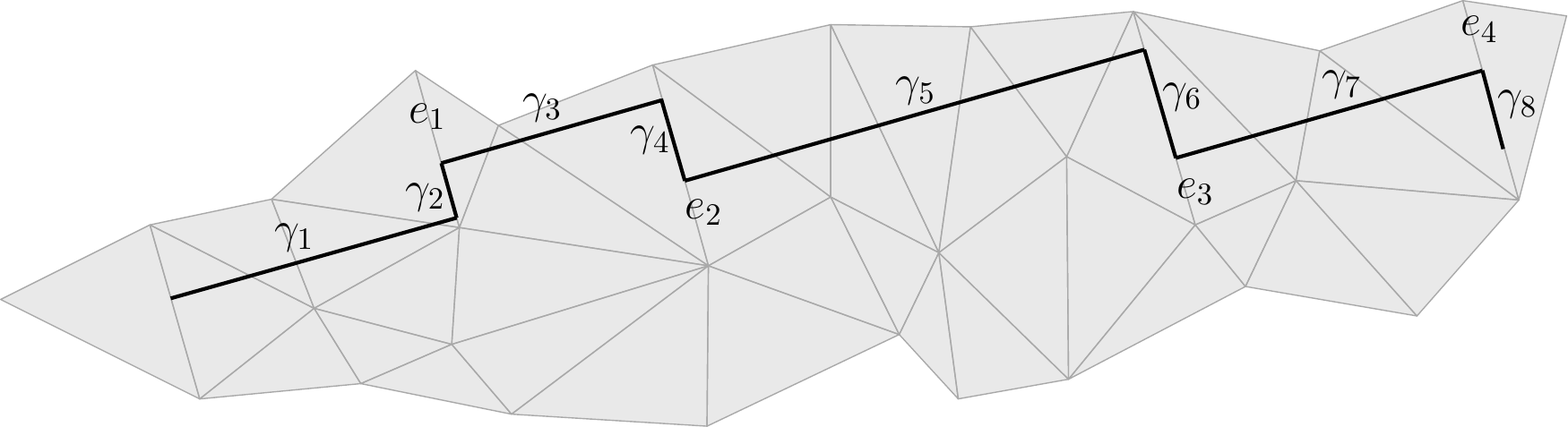}
	\caption{A sheared geodesic}
	\label{f:5}
\end{figure}

\begin{prop}
\label{prop:sheared}
Let $X$ be a piecewise Euclidean triangle complex that is~$\mathrm{CAT}(0)$. Let $\gamma$ in $X$ be a sheared geodesic. Then $\gamma$ is not a closed path.
\end{prop}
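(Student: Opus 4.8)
The plan is to argue by contradiction: suppose $\gamma = \gamma_1\cdot\gamma_2\cdots\gamma_{2k}$ is a closed sheared geodesic, and derive a contradiction from the $\mathrm{CAT}(0)$ condition by producing a closed geodesic (or a piecewise geodesic whose breakpoint angles force it to be a geodesic loop, which is impossible in a $\mathrm{CAT}(0)$ space). The guiding idea is that the ``perpendicular to $e_i$'' conditions make the breakpoint angles at the endpoints of each $\gamma_{2i-1}$ very controlled: at the breakpoint between $\gamma_{2i-1}$ and $\gamma_{2i}$ (both endpoints lying in the interior of the edge $e_i$), the angle is $\frac{\pi}{2}$ on one side; and since $\gamma_{2i-1}$ and $\gamma_{2i+1}$ enter $e_i$ perpendicularly from \emph{distinct} triangles, the total angle at the $e_i$-vertex of the link, measured across the triangles, is $\geq \pi$ (it is $\pi$ contributed by the two perpendicular directions in distinct triangles, since $\lk$ of an interior edge point has edges of length $\pi$ for each incident triangle). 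So along the ``edge-travelling'' pieces $\gamma_{2i}$, the piecewise geodesic turns by Alexandrov angle exactly $\pi$ at each end, i.e.\ those breakpoints are straight.

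First I would set up the link computation carefully: for $x$ in the interior of an edge $e$ of $X$, $\lk^X_x$ is the graph with two vertices (the two sides of $e$) joined by edges of length $\pi$, one per triangle containing $e$; a direction perpendicular to $e$ in a triangle $T$ is the midpoint of the corresponding length-$\pi$ edge. Hence two perpendicular directions in distinct triangles are at distance $\pi$ in $\lk^X_x$ (going through either $e$-vertex), so by the discussion of angles in \S\ref{sec:CAT(0)} the Alexandrov angle between $\gamma_{2i-1}$ (arriving at its endpoint in $e_i$) and $\gamma_{2i+1}$ (leaving its startpoint in $e_i$) is $\pi$ — after we reconcile that these are a priori at two \emph{different} points of $e_i$ joined by $\gamma_{2i}\subset e_i$. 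Since $\gamma_{2i}$ runs inside the edge $e_i$, which is a geodesic segment isometrically embedded (an edge of a $\mathrm{CAT}(0)$ complex is convex), the concatenation $\gamma_{2i-1}\cdot\gamma_{2i}\cdot\gamma_{2i+1}$ has Alexandrov angle $\pi$ at both breakpoints: at each endpoint of $\gamma_{2i}$, the direction of $\gamma_{2i}$ along $e_i$ is at distance $\frac{\pi}{2}$ from the perpendicular direction, hence the angle between the incoming $\gamma_{2i-1}$ (or outgoing $\gamma_{2i+1}$) and $\gamma_{2i}$ is $\frac{\pi}{2}+\frac{\pi}{2}=\pi$. (Degenerate $\gamma_{2i}$: then $\gamma_{2i-1}$ and $\gamma_{2i+1}$ meet at a single interior point of $e_i$ at angle $\pi$ directly.)

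Consequently the whole closed loop $\gamma$ is a piecewise geodesic, \emph{all} of whose breakpoints have Alexandrov angle $\pi$, except possibly... — and here is the subtlety: the cyclic ``closing up'' breakpoint where $\gamma_{2k}$ ends and $\gamma_1$ begins. The last bullet only guarantees that $\gamma_{2k-1}$ ends perpendicularly to $e_k$; it does not \emph{a priori} say $\gamma_1$ starts perpendicularly to $e_k$ — but if $\gamma$ is closed then $\gamma_1$ begins where $\gamma_{2k}$ ends, which is an interior point of $e_k$, and the perpendicularity of $\gamma_{2k-1}$ together with $\gamma_2\subset e_1$ (giving a $\frac\pi2$ contribution when the loop wraps) must be combined; I expect that reindexing cyclically (treating $\gamma_{2k}\cdot\gamma_1\cdot\gamma_2$ just like the other triples, using that $\gamma_1$ by the $i=1$ case of the second bullet starts perpendicularly to $e_1$ and symmetrically that the loop, being closed, forces $\gamma_1$ to start perpendicularly to $e_k$ as well, from distinctness of the triangles of $\gamma_{2k-1}$ and $\gamma_1$) makes all breakpoints straight. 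Then by Theorem~\ref{thm:localisometry}, a closed piecewise local geodesic with all Alexandrov angles $\pi$ is a geodesic; but a closed geodesic loop in a $\mathrm{CAT}(0)$ space is constant (geodesics are unique and $\mathrm{CAT}(0)$ spaces are uniquely geodesic and contractible, so there are no nonconstant closed geodesics), contradicting that $\gamma$, being a sheared geodesic with $\gamma_{2k-1}$ a genuine perpendicular segment in a triangle, has positive length.

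The main obstacle I anticipate is precisely the bookkeeping at the closing breakpoint and, relatedly, making sure the distinctness-of-triangles hypothesis is correctly invoked cyclically — i.e.\ verifying that when the loop closes, $\gamma_1$ and $\gamma_{2k-1}$ approach $e_k$ through distinct triangles so that the angle there is also $\pi$. If the definition as literally stated does not force that, one fallback is: the loop $\gamma$ is noncontractible-to-a-point-along-itself would be false, but more robustly, one shows $\gamma$ cannot close because the two ``ends'' $\gamma_1$ and $\gamma_{2k-1}$ point in directions at $e_k$ that would have to coincide while being perpendicular-from-distinct-triangles, which is impossible — i.e.\ a direct contradiction at the would-be closing point without even invoking Theorem~\ref{thm:localisometry}. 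I would present the argument in whichever of these two forms turns out cleaner after pinning down the link computation.
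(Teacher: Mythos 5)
Your plan hinges on the claim that at the two breakpoints of each triple $\gamma_{2i-1}\cdot\gamma_{2i}\cdot\gamma_{2i+1}$ the Alexandrov angle is $\pi$, so that the whole loop would be a closed local geodesic, contradicting Theorem~\ref{thm:localisometry} and uniqueness of geodesics in a $\mathrm{CAT}(0)$ space. That angle computation is wrong when $\gamma_{2i}$ is nontrivial, which is exactly the generic (``sheared'') case and the one arising in Section~\ref{sec:free}. At the point $y_i$ where $\gamma_{2i-1}$ ends, the incoming (reversed) direction of $\gamma_{2i-1}$ is the midpoint of the length-$\pi$ link edge of its triangle, while the outgoing direction of $\gamma_{2i}$ is one of the two \emph{vertices} of $\lk^X_{y_i}$ (a direction along $e_i$); these are at distance exactly $\frac{\pi}{2}$, not $\pi$. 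Your ``$\frac{\pi}{2}+\frac{\pi}{2}=\pi$'' conflates the angle between the two perpendicular geodesics when they meet at a single point (which is indeed $\pi$, and handles only the degenerate case $y_i=z_i$) with the two angles at the distinct endpoints of a nontrivial $\gamma_{2i}$. So a sheared geodesic turns by a right angle each time it enters or leaves an edge segment; it is not a local geodesic, Theorem~\ref{thm:localisometry} does not apply, and no contradiction with ``no closed geodesics in $\mathrm{CAT}(0)$'' can be extracted this way. Your fallback also does not help: the definition places no perpendicularity constraint on the \emph{start} of $\gamma_1$, so nothing forces two coinciding perpendicular directions at the would-be closing point.

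The paper's proof is of a different nature, precisely because the path fails to be locally geodesic. It proceeds by induction along the path, comparing with genuine geodesics from the starting point $x$: Lemma~\ref{lem:edge} (a continuity/concatenation argument inside an edge) shows that if the geodesic from $x$ to one interior point of $e_i$ ends in a triangle $T$, then so does the geodesic from $x$ to any other interior point of $e_i$; Lemma~\ref{lem:notedge} uses the $\mathrm{CAT}(0)$ angle bound for triangles (angle $>\frac{\pi}{2}$ at $z$ forces angle $<\frac{\pi}{2}$ at $y$) to propagate the statement across each perpendicular segment $\gamma_{2i+1}$. The induction yields that $x\neq z_k$ and the geodesic $xz_k$ ends in $T_k$, so in particular the endpoints of $\gamma$ are distinct. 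If you want to salvage your write-up, you would need to replace the ``all angles are $\pi$'' step by some such comparison argument; as it stands the proposal only proves the special case in which every $\gamma_{2i}$ is a point.
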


The proof will use the following two building blocks.

\begin{lemma}
\label{lem:edge}
Let $X$ be a piecewise Euclidean triangle complex that is $\mathrm{CAT}(0)$. Let $xy$ be a nontrivial geodesic in $X$ such that $y$ belongs to the interior of an edge $e$ of $X$ and $xy$ ends in a triangle $T$. Then for any $z$ in the interior of $e$, the geodesic $xz$ is nontrivial and ends in $T$.
\end{lemma}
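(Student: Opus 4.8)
The plan is to argue by continuity and convexity. Since $X$ is $\mathrm{CAT}(0)$, the distance function $z\mapsto d(x,z)$ is convex, and geodesics vary continuously with their endpoints. Parametrise a small arc of $e$ through $y$ as $z(t)$, $t\in(-\epsilon,\epsilon)$, with $z(0)=y$. For $z(t)$ sufficiently close to $y$ the geodesic $xz(t)$ is nontrivial (since $x\neq y$), so the only content is to show that for such $t$ the geodesic $xz(t)$ ends in the same triangle $T$ as $xz(0)=xy$. Here I use the identification of the space of directions at $y$ with $\lk^X_y$ from~\S\ref{sec:CAT(0)}: the endpoint direction of $xy$ at $y$ is a point $\zeta$ in the open edge of $\lk^X_y$ corresponding to $T$ (open because $xy$ ends in the interior of $T$, not along an edge). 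By continuity of geodesics, the endpoint direction at $z(t)$ of $xz(t)$ converges to $\zeta$ as $t\to 0$; more precisely one compares the small geodesic triangle $x\,y\,z(t)$ and uses that its angle at $z(t)$ opposite the short side converges to $\pi$ minus the angle of $xy$ with $e$ at $y$. Since the edges of $\lk^X_y$ corresponding to the triangles of $X$ containing $e$ form a neighbourhood of the relevant vertex of $\lk^X_y$, and $\zeta$ lies in the interior of one of them, for $t$ small enough the endpoint direction of $xz(t)$ still lies in the open edge corresponding to $T$; equivalently, $xz(t)$ ends in $T$.

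To upgrade this from ``for small $t$'' to ``for all $z$ in the interior of $e$'', I run a connectedness argument along the interior of $e$, which is an interval. Let $J\subset e^{\circ}$ be the set of $z$ for which $xz$ is nontrivial and ends in $T$. The above shows $J$ is open in $e^{\circ}$; I claim it is also closed. Suppose $z_n\in J$, $z_n\to z_\infty\in e^{\circ}$. The geodesics $xz_n$ converge to the geodesic $xz_\infty$, which is nontrivial since $x\notin e^{\circ}$ (as $xy$ ends in the interior of $T$, $x$ does not lie on $e$). Each $xz_n$ ends in the \emph{closed} triangle $\overline T$, so $xz_\infty$ ends in $\overline T$ as well; it ends in $\overline T\cap e$, which forces the endpoint direction of $xz_\infty$ at $z_\infty$ to lie in the closed edge of $\lk^X_{z_\infty}$ corresponding to $T$. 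It cannot be an endpoint of that edge: an endpoint would mean $xz_\infty$ ends along $e$ itself, i.e.\ $xz_\infty$ runs into $z_\infty$ tangent to $e$, but then the angle at $z_\infty$ between $xz_\infty$ and $e$ is $0$ or $\pi$, contradicting the $\mathrm{CAT}(0)$ comparison applied to a thin triangle with vertices $x$, $z_n$, $z_\infty$ (the angle at $z_\infty$ is a limit of angles at $z_n$, which are bounded away from $0$ and $\pi$ since $xz_n$ ends perpendicularly-transverse into $T$; alternatively use that $z\mapsto d(x,z)$ would fail to be convex, or strictly monotone, near such a point). Hence $xz_\infty$ ends in $T$, so $z_\infty\in J$ and $J$ is closed. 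Since $e^{\circ}$ is connected and $J$ is nonempty (it contains $y$), $J=e^{\circ}$, which is the assertion.

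The main obstacle I expect is the care needed at the ``closedness'' step: ruling out that a limiting geodesic $xz_\infty$ degenerates by becoming tangent to the edge $e$ at its endpoint, or by ending exactly at a vertex of $T$. Both are handled by the link description in~\S\ref{sec:CAT(0)} together with $\mathrm{CAT}(0)$ angle-comparison, but one must be slightly careful that the relevant directions stay in the \emph{open} edge of the link and that the endpoint direction map is continuous in the families involved; since $X$ is piecewise Euclidean and the simplices near any ball fall into finitely many isometry types, all the comparison-triangle estimates are uniform and the continuity is automatic. (If one prefers to avoid the connectedness argument entirely, one can instead observe directly that $xy$ extended past $y$ enters $T$, and that moving the endpoint within $e$ only shears the geodesic inside the convex region $T\cup(\text{triangle on the other side})$ — but the connectedness argument is cleanest and needs no extra geometric input.)
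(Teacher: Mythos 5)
Your open--closed strategy along $e^{\circ}$ is close in spirit to the paper's continuity argument, but the crucial point of the lemma -- ruling out that some geodesic $xz$ with $z\in e^\circ$ ends along the edge $e$ itself -- is exactly where your proof has a genuine gap. In the closedness step you assert that the angles at $z_n$ between $xz_n$ and $e$ are ``bounded away from $0$ and $\pi$ since $xz_n$ ends perpendicularly-transverse into $T$''. There is no perpendicularity hypothesis in the lemma: $xy$, and hence the $xz_n$, may meet $e$ at an arbitrary angle, and nothing prevents these angles from degenerating as $z_n\to z_\infty$; that degeneration is precisely the tangential scenario you must exclude, so the argument is circular. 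The fallback via convexity or monotonicity of $z\mapsto d(x,z)$ also fails: if $xz_\infty$ ran into $z_\infty$ along $e$, then $d(x,\cdot)$ restricted to $e$ would simply be affine of slope $\pm1$ near $z_\infty$ on one side, which is perfectly convex. Moreover, ``the angle at $z_\infty$ is a limit of the angles at $z_n$'' is not a valid principle (Alexandrov angles are only semicontinuous under convergence of geodesics), and the analogous assertion in your openness step -- that the endpoint \emph{direction} of $xz(t)$ converges to $\zeta$ -- is likewise unproved: since $\lk^X_{z}$ for $z\in e^\circ$ consists of several edges of length $\pi$ joining the same two vertices, controlling the angle of $xz(t)$ with $e$ does not identify the triangle in which it ends.

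The missing idea, which is the heart of the paper's short proof, is how the hypothesis that $xy$ ends in a triangle kills the tangential case. If some geodesic $xz$, $z\in e^\circ$, ends in $e$, then its intersection with $e$ is a terminal segment that can only begin at an endpoint of $e$ (at an interior point of $e$ every direction into a triangle is at angle $<\pi$ from the directions along $e$, so a geodesic cannot turn from a triangle into the edge there); hence the whole subedge $e_1$ on that side lies in $xz$, and by Theorem~\ref{thm:localisometry} appending the other subedge $e_2$ still gives a geodesic. By uniqueness of geodesics in a $\mathrm{CAT}(0)$ space, the geodesic from $x$ to any point of $e$ -- in particular to $y$ -- is an initial segment of one of these two geodesics and therefore ends in $e$, contradicting the hypothesis that $xy$ ends in $T$. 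With this observation (which is how the paper argues: if some $xz$ fails to end in $T$, continuity produces a $z$ for which $xz$ ends in $e$, and then the above applies), your closedness step can be repaired; as written, it does not go through.
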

\begin{proof}
We have $z\neq x$ since edges are geodesics and so in particular $x$ does not lie in $e$.
If for some $z$ in the interior of $e$ the geodesic $xz$ does not end in~$T$, then, since the geodesic $xz$ varies continuously with $z$, 
for some $z$ in the interior of $e$ the geodesic $xz$ ends in $e$. Denote by $e_1, e_2$ the two subedges into which such $z$ divides $e$. Suppose that $xz$ ends in $e_1$. Then the entire $e_1$ must lie in $xz$. Moreover, appending $xz$ by~$e_2$ we also obtain a geodesic (Theorem~\ref{thm:localisometry}). 
Since $y$ lies in $e=e_1\cdot e_2$, this shows that $xy$ ends in~$e$,
which is a contradiction.
\end{proof}

\begin{lemma}
\label{lem:notedge}
Let $X$ be a piecewise Euclidean triangle complex that is $\mathrm{CAT}(0)$. Let $xz$ be a nontrivial geodesic in $X$ such that $z$ belongs to the interior of an edge $e$ of $X$ and $xz$ ends in a triangle~$T$. Suppose that $y$ belongs to the interior of an edge $e'$ of $X$ and $zy$ is a nontrivial geodesic in $X$ that starts perpendicularly to $e$ in a triangle distinct from $T$ and ends perpendicularly to $e'$ in a triangle $T'$. Then the geodesic $xy$ is nontrivial and ends in $T'$.
\end{lemma}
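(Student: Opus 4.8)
The plan is to reduce the claim to the previous lemma by an angle-chasing argument in the \catz complex, using the characterisation of geodesics via Alexandrov angles (Theorem~\ref{thm:localisometry}). First I would apply Lemma~\ref{lem:edge} to replace the point $z$ by any point $z''$ in the interior of $e$: the geodesic $xz''$ is nontrivial and still ends in $T$. In particular, letting $z''\to z$ along $e$, the outgoing direction of $z''x$ at $z''$ varies continuously, and since $xz$ ends in $T$, the geodesic $zx$ \emph{starts in $T$} at $z$. The geodesic $zy$, by hypothesis, starts perpendicularly to $e$ at $z$ in a triangle $T''$ distinct from $T$. So at $z$ the two directions $zx$ and $zy$ lie in different triangles of $\lk^X_z$, with $zy$ at angle $\tfrac\pi2$ to each of the two directions along $e$; hence the angle at $z$ between $zx$ and $zy$ is at least $\tfrac\pi2$ plus the angle between $zx$ and the appropriate direction of $e$. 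Since $zx$ starts in a triangle and $e$ is an edge, that latter angle is positive, so the angle at $z$ between $\gamma_1=xz$ and $\gamma_2=zy$ is $>\tfrac\pi2$. This is not yet $\pi$, so I cannot invoke Theorem~\ref{thm:localisometry} directly on $xz\cdot zy$; instead I would run a continuity/deformation argument as in Lemma~\ref{lem:edge}.

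The main step is the deformation argument. Consider the geodesic $xy$ and suppose for contradiction it does \emph{not} end in $T'$. Since $y$ lies in the interior of the edge $e'$, and since $zy$ ends perpendicularly to $e'$ in $T'$ while $xy$ ends somewhere in $\mathrm{St}(e')$, I would move the target point along the geodesic $zy$: for $t\in[0,1]$ let $y_t$ be the point on $zy$ at parameter $t$ (so $y_0=z$, $y_1=y$), and consider the geodesic $xy_t$. At $t=0$ this is $xz$, which ends in $T$. By the argument of the previous paragraph combined with Lemma~\ref{lem:edge}, for small $t>0$ the point $y_t$ still lies in the interior of $e$ only if $zy$ re-enters $e$, which it does not since $zy$ starts transverse to $e$; so for small $t>0$, $y_t$ lies in the interior of $T''$ and the geodesic $xy_t$ coincides with $xz$ followed by the initial segment $zy_t$ — this is forced because the concatenation $xz\cdot zy_t$ has Alexandrov angle $>\tfrac\pi2$ at $z$, which is not enough on its own, so here I would instead argue that $xy_t$ must pass through $z$ by the usual \catz convexity/first-variation argument: if $xy_t$ missed $z$, the comparison angle at $z$ in the triangle $xzy_t$ together with $d(x,y_t)<d(x,z)+d(z,y_t)$ would contradict that $zy_t$ starts perpendicularly to $e$ on the side away from $T$ — more precisely one uses that the projection of $x$ to the geodesic $zy_t$ is $z$ itself, because the angle at $z$ between $zx$ and $zy_t$ is $\geq\tfrac\pi2$. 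Hence $xy_t = xz\cup zy_t$ for all $t$, which for $t=1$ gives that $xy$ ends exactly where $zy$ ends, namely in $T'$, and is nontrivial since $x\notin e'$ (as $e$ and $e'$ are edges and $xz$ ends in $T$, so $x$ is not on $e$, and if $x$ were on $e'$ then $zy$ would have to pass through $x$, contradicting that $xy=xz\cup zy$ is a genuine concatenation unless $x=z$, excluded).

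The hard part will be making rigorous the claim that the concatenation $xz\cdot zy_t$ is actually a geodesic despite the angle at $z$ being only $\geq\tfrac\pi2$ rather than $\pi$. The clean way around this is precisely the projection argument: in a \catz space, a path $p\cdot q$ from $x$ to $y_t$ through $z$ is geodesic iff $z$ is the nearest-point projection of $x$ onto $zy_t$ (equivalently onto the geodesic $xy_t$ extended), and the nearest-point projection is characterised by the Alexandrov angle at $z$ being $\geq\tfrac\pi2$, which we have established. So I would isolate, as a first sub-step, the elementary fact that if $\gamma=pq$ is a concatenation of geodesics in a \catz space meeting at $z$ with Alexandrov angle $\geq\tfrac\pi2$, and moreover $z$ is the projection of the endpoint of $p$ onto (the line containing) $q$, then $\gamma$ is geodesic — and then verify the projection hypothesis using that $zy$ (hence $zy_t$) starts perpendicularly to $e$ in $T''$ while $zx$ starts in $T\ne T''$, so that in $\lk^X_z$ the point $zx$ is at distance $\geq\tfrac\pi2$ from $zy_t$. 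With that sub-step in hand, the rest is the continuity argument of Lemma~\ref{lem:edge} verbatim, and the conclusion that $xy$ is nontrivial and ends in $T'$ follows.
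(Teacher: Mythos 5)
Your first step (the Alexandrov angle at $z$ between $zx$ and $zy$ is $>\frac{\pi}{2}$, computed in $\lk^X_z$ because $zx$ starts into $T$ while $zy$ starts perpendicularly to $e$ in a different triangle) is exactly the paper's starting point. But the continuation has a genuine gap: your key ``sub-step'' --- that a concatenation $p\cdot q$ of geodesics meeting at $z$ with Alexandrov angle $\geq\frac{\pi}{2}$, with $z$ the nearest-point projection of the endpoint of $p$ onto $q$, is itself a geodesic --- is false. In a $\mathrm{CAT}(0)$ space a concatenation of geodesics is a geodesic if and only if the Alexandrov angle at the breakpoint equals $\pi$ (this is Theorem~\ref{thm:localisometry}); the projection condition is equivalent only to the angle being $\geq\frac{\pi}{2}$, which is far weaker. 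Already in the Euclidean plane, take $x=(-1,1)$, $z=(0,0)$, $y_t=(t,0)$: the angle at $z$ is $\frac{3\pi}{4}$ and $z$ is the projection of $x$ onto the segment $zy_t$, yet the geodesic $xy_t$ is the straight segment, which misses $z$. Hence your claim $xy_t=xz\cup zy_t$, on which the whole deformation argument and the final conclusion rest, is unjustified and in general false: the geodesic $xy$ need not pass through $z$ at all, and the lemma does not assert that it does.

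The paper closes the argument quite differently, and much more cheaply, by an angle comparison at the \emph{other} endpoint. Since the Alexandrov angle of the geodesic triangle $xyz$ at $z$ is $>\frac{\pi}{2}$, the two sides $zx$, $zy$ are distinct, so $x\neq y$ (nontriviality), and by $\mathrm{CAT}(0)$ comparison (the Alexandrov angles are bounded by the comparison angles, whose sum is $\pi$) the Alexandrov angle at $y$ between $yx$ and $yz$ is $<\frac{\pi}{2}$. Because $zy$ ends perpendicularly to $e'$ in $T'$, the direction of $yz$ at $y$ is the midpoint of the link-edge of $\lk^X_y$ corresponding to $T'$, at distance $\frac{\pi}{2}$ from both directions of $e'$; any direction at distance $<\frac{\pi}{2}$ from it therefore lies in the interior of that same link-edge, so $xy$ ends in $T'$. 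If you want to salvage your write-up, replace the deformation/projection step by this comparison at $y$; no statement about $xy$ passing through $z$ is needed or available.
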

\begin{proof}
Consider the geodesic triangle $xyz$. By our assumptions, its Alexandrov angle at $z$ is $>\frac{\pi}{2}$, and so in particular $x\neq y$,
and its Alexandrov angle at $y$ is $<\frac{\pi}{2}$. Since $zy$ ends in a triangle $T'$ perpendicularly to $e'$, we have that
$xy$ ends in $T'$, as desired.
\end{proof}

\begin{proof}[Proof of Proposition~\ref{prop:sheared}]
Let $\gamma=\gamma_1\cdot \gamma_2\cdots \gamma_{2k-1}\cdot\gamma_{2k}$ as in Definition~\ref{def:sheared}. For $i=1,\ldots, k,$ denote $\gamma_{2i}=y_iz_i$ and denote by $T_i$ the triangle in which $\gamma_{2i-1}$ ends. Let $x$ be the starting point of $\gamma_{1}$. We prove by induction on $i=1,\ldots, k,$ that
$x$ and $z_i$ are distinct and that the geodesic $xz_i$ ends in $T_i$. The proposition follows from this induction hypothesis applied with $i=k$.

For $i=1$ the induction hypothesis follows from Lemma~\ref{lem:edge}. Suppose now that we have established it for some $i=m<k$. Then by Lemma~\ref{lem:notedge} the geodesic $xy_{m+1}$ is nontrivial and ends in $T_{m+1}$. Thus by Lemma~\ref{lem:edge}, the
induction hypothesis holds for $i=m+1$.
\end{proof}

\section{Free}
\label{sec:free}

\begin{proof}[Proof of Proposition~\ref{prop:decompose}]
By Proposition~\ref{prop:unfolding}, we can assume that none of the vertex links of $X$ are unfoldable. 
Thus by Proposition~\ref{prop:rank1} and Lemma~\ref{lem:noteasy}(i) we can assume that $X$ is piecewise Euclidean and extrational. Let $Z\subset X$ be a thick {$G$-\sics} Note that each patch of $X$ either has no triangle in $Z$ or is contained in $Z$, in which case we call it a \emph{$Z$-patch}.

Since $X$ is rational, and $Z$ is a {$G$-\sics}, there is $q\in \N$ such that for each $Z$-patch $P$ and each vertex $v\in \partial P$, the length of $\lk_v^{\overline P}$ is a multiplicity of $\frac{\pi}{q}$. For each $Z$-patch $P$, we
define the homomorphism $\psi'=\psi'(P)\colon H_1(\overline P,\partial P)\to \R/\frac{\pi}{q}\Z$ in the same way as $\psi$, but replacing $\pi\Q$ by $\frac{\pi}{q}\Z$. We have $\psi=\psi'$ mod $\pi\Q$. Since $\psi$ is trivial, the image of $\psi'$ is contained in $\pi\Q/\frac{\pi}{q}\Z$. Since 
there are finitely many $G$-orbits of $Z$-patches, and since each $H_1(\overline P,\partial P)$ is finitely generated as a $\mathrm{Stab}(P)$-module, there is $q'\in \N$ such that the image of each~$\psi'$ is contained in $\frac{\pi}{q'}\Z/\frac{\pi}{q}\Z$. Consequently, for any $Z$-patch $P$, any geodesic $xy$ in~$\overline P$ disjoint from~$\partial P$, except at its endpoints, that is at angle $\in \frac{\pi}{q'}\Z$ from~$\partial P$ at $x$, is also at angle $\in \frac{\pi}{q'}\Z$ from $\partial P$ at $y$. Without loss of generality assume that $q'$ is even.

We need the following variant of the Liouville measure $\mu$ from \cite[\S3]{BB}. Let $S$ be the set of all the directions $\xi$ at an angle $\theta(\xi)\in \frac{\pi}{q'}\Z\cap (-\frac{\pi}{2},\frac{\pi}{2})$ from a direction normal to~$E$ in the links $\lk_x^Z$ for all the points $x\in Z$ that lie in the interior of an edge~$e$ of~$E$. The \emph{Liouville measure} $d\mu(\xi)$ on $S$ is given as $\cos \theta(\xi)dx$, where $dx$ is the volume element on $e$. Let $V\subset S$ be the full measure subset of $S$ of directions~$\xi$ such that each geodesic ray~$\gamma$ in~$Z$ with starting direction~$\xi$ is disjoint from $Z^0$. 
Let $F\colon V\to \mathcal P(V)$ be the map defined by $\eta\in F(\xi)$ for $\eta\in \lk_x^Z$ if there exists a geodesic $yz$ in $Z$ with starting direction $\xi$, intersecting~$E$ only in $y$ and $x$, and with $\eta$ being the direction at $x$ of $xz$. Since $G$ acts on $Z$ properly and cocompactly, we have that each $F(\xi)$ is finite. We can thus define a Markov chain with states $V$ and transition probabilities $\frac{1}{|F(\xi)|}$ from $\xi$ to each $\eta\in F(\xi)$. By (the calculation in) \cite[Prop~3.3]{BB}, the measure $\mu$ is stationary for this Markov chain. Thus the space $V^\Z$ can be equipped with Markov measure~$\mu^*$ invariant under the shift (see e.g.\ \cite[Ex~(8),~page~21]{Wa}). Since $Z$ is a {$G$-\sics}, the quotient $V^\Z/G$ by the diagonal action of $G$ is of finite measure. Note that the shift map descends to $V^\Z/G$ and is still measure preserving. 

Let $e$ be an edge of $Z$ lying in three distinct triangles $T_a,T_b,T_c$ of $Z$. Let $V^{ab}\subset V^\Z$ be the set of $(\xi_i)_i$ such that
\begin{itemize}
\item
we have $\xi_1\in \lk_x^{T_a}$ for $x\in e$, and $\xi_1$ is at angle $\frac{\pi}{2}$ from $e$, and
\item
the geodesic $yx$ from the definition of $\xi_1\in F(\xi_0)$ ends in $T_b$.
\end{itemize}
Note that $V^{ab}$ has positive Markov measure. Thus by the Poincar\'e recurrence (see e.g.\ \cite[Thm~1.4]{Wa}), there is $(\xi_i)_i\in V^{ab}$ and $j>0$ with $(\xi_{i-j})_i\in GV^{ab}$.
Consequently, there is a geodesic $\gamma^{ab}$ in $Z\setminus Z^0$ starting perpendicularly to~$e$ in~$T_a$ and ending perpendicularly to a translate $fe$ in $fT_b$, for some $f\in G$. 

Denote by $a, fb$ the endpoints of $\gamma^{ab}$. Let $I^{ab}$ be the domain of 
the isometric embedding $\gamma^{ab}\colon I^{ab}\to Z\setminus Z^0$. Analogously, there is a geodesic $\gamma^{ca}\colon I^{ca}\to Z\setminus Z^0$ starting perpendicularly to $e$ in $T_c$ and ending perpendicularly to a translate $ge$ in $gT_a$, with endpoints $c, a'$, for some $g\in G$. Finally, there is a geodesic $\gamma^{bc}\colon I^{bc}\to Z\setminus Z^0$ starting perpendicularly to $ge$ in $T_b$ and ending perpendicularly to a translate $f'ge$ in $f'gT_c$, with endpoints $b', f'c'$, for some $f'\in G$. Let $\gamma\colon I\to e$ be the shortest geodesic in $e$ containing all $a,b,c$ in its image (possibly $I$ is a single point), and let $\gamma'\colon I'\to ge$ be the shortest geodesic in $ge$ containing all $a',b',c'$ in its image.

Let $\Gamma$ be the metric graph obtained in the following way. We start from the disjoint union of the five intervals $I^{ab},I^{ca},I^{bc},I,I'$, and we identify (see Figure~\ref{f:3}):
\begin{itemize}
\item
points of $I$ and $I^{ab}$ mapping to $a$ under $\gamma$ and $\gamma^{ab}$,
\item
points of $I$ and $I^{ca}$ mapping to $c$ under $\gamma$ and $\gamma^{ca}$,
\item
points of $I'$ and $I^{ca}$ mapping to $a'$ under $\gamma'$ and $\gamma^{ca}$, and 
\item
points of $I'$ and $I^{bc}$ mapping to $b'$ under $\gamma'$ and $\gamma^{bc}$.
\end{itemize}
Note that $\Gamma$ admits the map $\varphi\colon \Gamma\to Z$ that is the quotient of $\gamma^{ab}\sqcup \gamma^{ca}\sqcup\gamma^{bc}\sqcup\gamma\sqcup\gamma'$. Let $s,t,s',t'$, be the points in $I,I^{ab},I',I^{bc}$ mapping under $\varphi$ to $b, fb, c',f'c'$, respectively.

\begin{figure}[h!]
	\centering
	\includegraphics[width=0.76\textwidth]{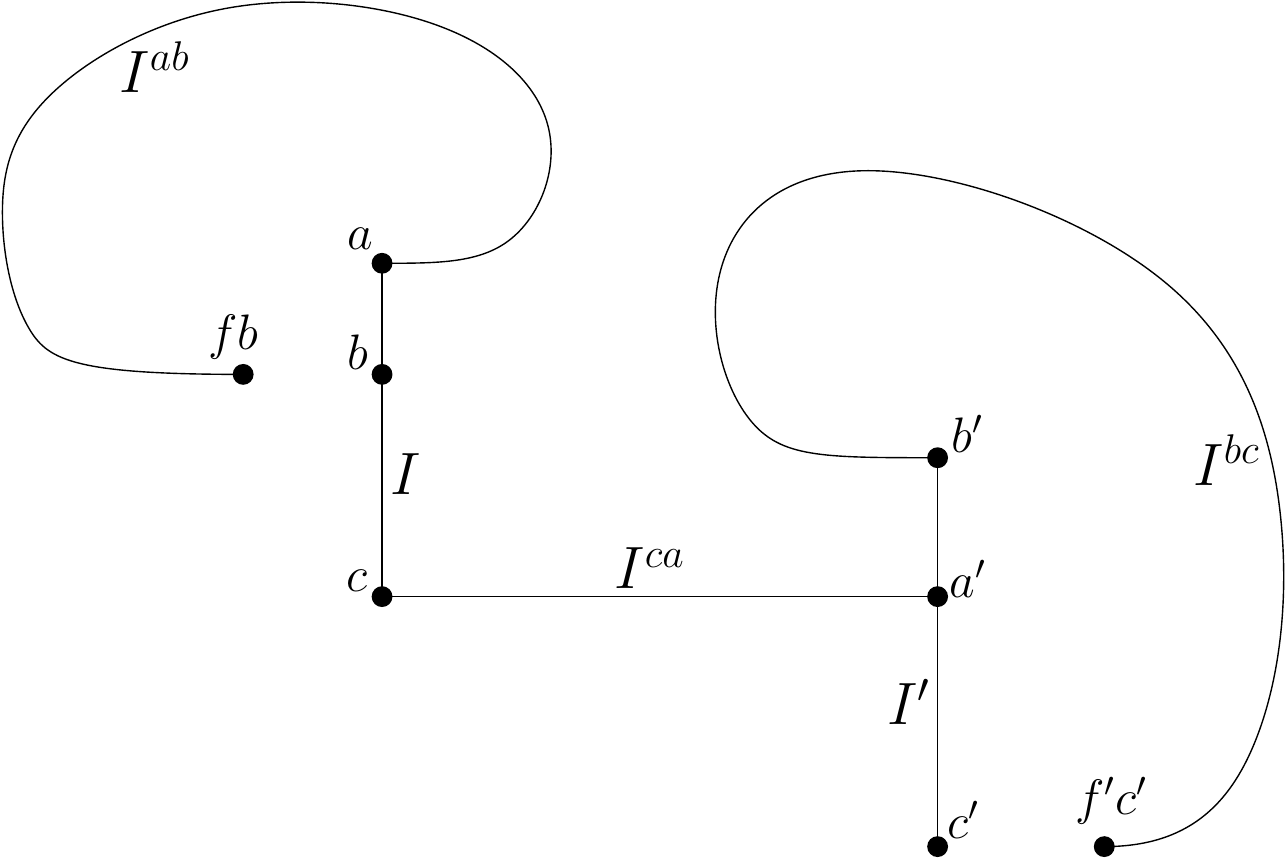}
	\caption{The graph $\Gamma$}
	\label{f:3}
\end{figure}

Let $F_2$ be the free group on two generators $h,h'$ and let $\widehat\Gamma$ be the quotient of the graph $F_2\times \Gamma$ (which is the disjoint union of $F_2$ copies of $\Gamma$) by the relations $w\times t\sim wh\times s, w\times t'\sim wh'\times s'$, for all $w\in F_2$. Note that $\widehat\Gamma$ is a tree with a free action of $F_2$. Let $\varphi_*\colon F_2 \to G$ be the homomorphism mapping $h,h'$ to $f,f'$, respectively. Then  
$\varphi$ extends to a $\varphi_*$-equivariant map $\widehat \varphi \colon \widehat \Gamma\to Z$ mapping each $w\times r\in F_2\times  \Gamma$ to $\varphi_*(w)\varphi(r)\in Z$. 

Let $w$ be a nontrivial element of $F_2$ and let $\R_w$ be the axis for $w$ in $\widehat\Gamma$. Pick $p\in \R_w$ an endpoint of a translate of one of $I^{ab},I^{ca},I^{bc}$ contained in~$\R_w$. Let $I_w\subset \R_w$ be the interval between $p$ and $wp$, and let $\gamma_w\colon I_w\to Z$ be the restriction of $\widehat \varphi$ to~$I_w$.  Since $T_a,T_b,T_c,$ were distinct, we have that $\gamma_w$ is a sheared geodesic. By Proposition~\ref{prop:sheared}, we have $\widehat \varphi(p)\neq \widehat \varphi(wp)=\varphi_*(w)\widehat \varphi(p)$, and consequently $\varphi_*(w)$ is nontrivial. Thus $\varphi_*$ is injective, and so $G$ contains a nonabelian free group.
\end{proof}

\begin{bibdiv}
\begin{biblist}

\bib{BB}{article}{
	author={Ballmann, Werner},
	author={Brin, Michael},
	title={Orbihedra of nonpositive curvature},
	journal={Inst. Hautes \'{E}tudes Sci. Publ. Math.},
	number={82},
	date={1995},
	pages={169--209 (1996)},
}
   
\bib{Babu}{article}{
   author={Ballmann, Werner},
   author={Buyalo, Sergei},
title={Nonpositively curved metrics on $2$-polyhedra},
journal={Math. Z.},
volume={222},
date={1996},
number={1},
pages={97--134},
}

\bib{BestvinaProblem}{article}{
	AUTHOR = {Bestvina, Mladen},
	TITLE = {Questions in Geometric Group Theory},
	date = {2000},
	eprint = {https://www.math.utah.edu/~bestvina/eprints/questions-updated.pdf}
}

\bib{BF}{article}{
   author={Bestvina, Mladen},
author={Fujiwara, Koji},
title={A characterization of higher rank symmetric spaces via bounded
	cohomology},
journal={Geom. Funct. Anal.},
volume={19},
date={2009},
number={1},
pages={11--40},
}

\bib{B}{article}{
   author={Bridson, Martin R.},
title={On the semisimplicity of polyhedral isometries},
journal={Proc. Amer. Math. Soc.},
volume={127},
date={1999},
number={7},
pages={2143--2146},
}

\bib{BridsonICM}{article}{
	author={Bridson, Martin R.},
	title={Non-positive curvature and complexity for finitely presented
		groups},
	conference={
		title={International Congress of Mathematicians. Vol. II},
	},
	book={
		publisher={Eur. Math. Soc., Z\"{u}rich},
	},
   date={2006},
pages={961--987}
}

\bib{BridsonProblem}{article}{
	AUTHOR = {Bridson, Martin R.},
	TITLE = {Problems concerning hyperbolic and ${\rm CAT}(0)$ groups},
	date = {2007},
	eprint = {https://docs.google.com/file/d/0B-tup63120-GVVZqNFlTcEJmMmc/edit}
}

\bib{BH}{book}{
   author={Bridson, Martin R.},
author={Haefliger, Andr\'{e}},
title={Metric spaces of non-positive curvature},
series={Grundlehren der Mathematischen Wissenschaften [Fundamental
	Principles of Mathematical Sciences]},
volume={319},
publisher={Springer-Verlag, Berlin},
date={1999},
pages={xxii+643},
   }
   
\bib{Can}{article}{
   author={Cantat, Serge},
   title={Sur les groupes de transformations birationnelles des surfaces},
   language={French, with English and French summaries},
   journal={Ann. of Math. (2)},
   volume={174},
   date={2011},
   number={1},
   pages={299--340}}
 
\bib{C}{article}{
   author={Caprace, Pierre-Emmanuel},
title={Amenable groups and Hadamard spaces with a totally disconnected
	isometry group},
journal={Comment. Math. Helv.},
volume={84},
date={2009},
number={2},
pages={437--455},
}

\bib{Cap}{article}{
	author={Caprace, Pierre-Emmanuel},
	title={Lectures on proper $\rm CAT(0)$ spaces and their isometry groups},
	conference={
		title={Geometric group theory},
	},
	book={
		series={IAS/Park City Math. Ser.},
		volume={21},
		publisher={Amer. Math. Soc., Providence, RI},
	},
	date={2014},
	pages={91--125}}

\bib{CS}{article}{
   author={Caprace, Pierre-Emmanuel},
   author={Sageev, Michah},
   title={Rank rigidity for $\mathrm{CAT}(0)$ cube complexes},
   journal={Geom. Funct. Anal.},
   volume={21},
   date={2011},
   number={4},
   pages={851--891}}

\bib{CD}{article}{
   author={Charney, Ruth},
author={Davis, Michael W.},
title={The $K(\pi,1)$-problem for hyperplane complements associated to
	infinite reflection groups},
journal={J. Amer. Math. Soc.},
volume={8},
date={1995},
number={3},
pages={597--627},
}
   
\bib{CFH}{book}{
  author={Cornfeld, Isaac P.},
   author={Fomin, Serge\u{\i} V.},
   author={Sina\u{\i}, Yakov G.},
   title={Ergodic theory},
   series={Grundlehren der Mathematischen Wissenschaften [Fundamental
   Principles of Mathematical Sciences]},
   volume={245},
   note={Translated from the Russian by A. B. Sosinski\u{\i}},
   publisher={Springer-Verlag, New York},
   date={1982},
   pages={x+486}}

\bib{FHT}{article}{
	author={Farb, Benson},
	author={Hruska, Chris},
	author={Thomas, Anne},
	title={Problems on automorphism groups of nonpositively curved polyhedral
		complexes and their lattices},
	conference={
		title={Geometry, rigidity, and group actions},
	},
	book={
		series={Chicago Lectures in Math.},
		publisher={Univ. Chicago Press, Chicago, IL},
	},
	date={2011},
	pages={515--560}
}

\bib{GhydlH1990}{collection}{
   title={Sur les groupes hyperboliques d'apr\`es Mikhael Gromov},
   language={French},
   series={Progress in Mathematics},
   volume={83},
   editor={Ghys, \'{E}tienne},
   editor={de la Harpe, Pierre},
   note={Papers from the Swiss Seminar on Hyperbolic Groups held in Bern,
   1988},
   publisher={Birkh\"{a}user Boston, Inc., Boston, MA},
   date={1990},
   pages={xii+285}}

\bib{K}{article}{
   author={Kleiner, Bruce},
   title={The local structure of length spaces with curvature bounded above},
   journal={Math. Z.},
   volume={231},
   date={1999},
   number={3},
   pages={409--456}}

\bib{L1}{article}{
   author={Lamy, St\'{e}phane},
   title={L'alternative de Tits pour ${\rm Aut}[{\Bbb C}^2]$},
   language={French, with French summary},
   journal={J. Algebra},
   volume={239},
   date={2001},
   number={2},
   pages={413--437}}

\bib{LP}{article}{
	AUTHOR = {Lamy, St\'{e}phane},
	AUTHOR = {Przytycki, Piotr},
	TITLE = {Presqu'un immeuble pour le groupe des automorphismes mod\'er\'es},
	journal={Ann. H. Lebesgue},
	volume={4}
	date = {2021}
	pages={605-651}}

\bib{L}{book}{
	author={Leeb, Bernhard},
	title={A characterization of irreducible symmetric spaces and Euclidean
		buildings of higher rank by their asymptotic geometry},
	series={Bonner Mathematische Schriften [Bonn Mathematical Publications]},
	volume={326},
	publisher={Universit\"{a}t Bonn, Mathematisches Institut, Bonn},
	date={2000},
	pages={ii+42}}

\bib{LY}{article}{
   author={Lytchak, Alexander},
   title={Rigidity of spherical buildings and joins},
   journal={Geom. Funct. Anal.},
   volume={15},
   date={2005},
   number={3},
   pages={720--752}}

\bib{AM}{article}{
	author = {Martin, Alexandre},
	status={personal communication}
	date = {2020}
}

\bib{MP0}{article}{
	author={Martin, Alexandre},
	author={Przytycki, Piotr},
	title={Tits alternative for Artin groups of type FC},
	journal={J. Group Theory},
	volume={23},
	date={2020},
	number={4},
	pages={563--573}
}

\bib{MPa}{article}{
	author = {Martin, Alexandre},
	AUTHOR = {Przytycki, Piotr},
	TITLE = {Abelian subgroups of two-dimensional Artin groups},
	journal={Bull. Lond. Math. Soc.},
	doi={10.1112/blms.12500},
	date = {2021}}

\bib{MPb}{article}{
	author = {Martin, Alexandre},
	AUTHOR = {Przytycki, Piotr},
	title={Acylindrical actions for two-dimensional {A}rtin groups of hyperbolic type},
	journal={Int. Math. Res. Not.},
	doi={10.1093/imrn/rnab068},
        date = {2021}
}

\bib{NOP}{article}{
	author = {Norin, Sergey}
	AUTHOR = {Osajda, Damian},
	AUTHOR = {Przytycki, Piotr},
	TITLE = {Torsion groups do not act on $2$-dimensional $\mathrm{CAT}(0)$~complexes},
	journal={Duke Math. J.},
	status={to appear},
	eprint = {https://www.math.mcgill.ca/pprzytyc/},
	date = {2021}}

\bib{NosVin}{article}{
	author={Noskov, Guennadi A.},
	author={Vinberg, \`Ernest B.},
	title={Strong Tits alternative for subgroups of Coxeter groups},
	journal={J. Lie Theory},
	volume={12},
	date={2002},
	number={1},
	pages={259--264}
}

\bib{OP}{article}{
   AUTHOR = {Osajda, Damian},
   AUTHOR = {Przytycki, Piotr},
   note={With an appendix by J.\ McCammond, D.\ Osajda, and P.\ Przytycki},
   TITLE = {Tits Alternative for groups acting properly on $2$-dimensional recurrent complexes},
   journal={Adv. Math.},
   doi={10.1016/j.aim.2021.107976},
   date = {2021}}

\bib{SagWis}{article}{
	author={Sageev, Michah},
	author={Wise, Daniel T.},
	title={The Tits alternative for ${\rm CAT}(0)$ cubical complexes},
	journal={Bull. London Math. Soc.},
	volume={37},
	date={2005},
	number={5},
	pages={706--710}
}
   
\bib{Tits}{article}{
	author={Tits, Jacques},
	title={Free subgroups in linear groups},
	journal={J. Algebra},
	volume={20},
	date={1972},
	pages={250--270}
}   
   
\bib{Wa}{book}{
   author={Walters, Peter},
   title={An introduction to ergodic theory},
   series={Graduate Texts in Mathematics},
   volume={79},
   publisher={Springer-Verlag, New York-Berlin},
   date={1982},
   pages={ix+250}
}

\end{biblist}
\end{bibdiv}
\end{document}